\documentclass{article}[12pt]
\usepackage[english]{babel}
\usepackage{geometry}
\geometry{tmargin=1in, margin=1in}
\usepackage[utf8]{inputenc}
\usepackage{subcaption} 
\usepackage{tikz}
\usepackage{float}
\usepackage{amsmath,bm}
\usepackage{amssymb}
\usepackage{amsthm}
\usepackage{cite}
\usepackage{verbatim}
\usepackage{tikz-cd}
\usepackage{wrapfig}
\usepackage{indentfirst}
\usepackage{mathtools}
\usepackage{mathrsfs}
\usepackage{authblk}

\usepackage{physics}
\usepackage{mathdots}
\usepackage{yhmath}
\usepackage{cancel}
\usepackage{color}
\usepackage{siunitx}
\usepackage{array}
\usepackage{multirow}
\usepackage{gensymb}
\usepackage{tabularx}
\usepackage{booktabs}
\usetikzlibrary{fadings}
\usetikzlibrary{patterns}
\usetikzlibrary{shadows.blur}
\usetikzlibrary{shapes}
\usepackage{hyperref}

\usepackage{soul}

\newtheorem{lemma}{Lemma}
\newtheorem{thm}{Theorem}
\newtheorem{remark}{Remark}

\title{Skeleton-stabilized divergence-conforming B-spline discretizations for highly advective incompressible flow problems}
\author[1]{Guoxiang Grayson Tong}
\author[2]{David Kamensky}
\author[3]{John A. Evans}
\affil[1]{Department of Applied and Computational Mathematics and Statistics, University of Notre Dame, Notre Dame, IN, USA, 46556}
\affil[2]{Department of Mechanical and Aerospace Engineering, University of California San Diego, La Jolla, CA, USA, 92093 }
\affil[3]{Ann and H.J. Smead Aerospace Engineering Sciences, University of Colorado Boulder, Boulder, CO, USA, 80303}

\date{}
\begin{document}

\maketitle
\begin{abstract}
We consider a stabilization method for divergence-conforming B-spline discretizations of the incompressible Navier--Stokes problem wherein jumps in high-order normal derivatives of the velocity field are penalized across interior mesh facets.  We prove that this method is pressure robust, consistent, and energy stable, and we show how to select the stabilization parameter appearing in the method so that excessive numerical dissipation is avoided in both the cross-wind direction and in the diffusion-dominated regime.  We examine the efficacy of the method using a suite of numerical experiments, and we find the method yields optimal $\textbf{L}^2$ and $\textbf{H}^1$ convergence rates for the velocity field, eliminates spurious small-scale structures that pollute Galerkin approximations, and is effective as an Implicit Large Eddy Simulation (ILES) methodology.\\

\noindent {\it{Keywords}}: Isogeometric analysis; Divergence-conforming discretizations; Edge stabilization; Skeleton stabilization; Pressure robustness; Incompressible Navier--Stokes equations
\end{abstract}

\section{Introduction}
Over the past decade, divergence-conforming B-splines have emerged as an attractive candidate for the spatial discretization of incompressible fluid flow problems \cite{BA11,evans2013isogeometric,EV12,EV13,VA17,EV18}.  When applied to the Galerkin approximation of the incompressible Navier--Stokes problem, divergence-conforming B-splines produce pointwise divergence-free velocity fields and thus exactly satisfy mass conservation.  Divergence-conforming B-spline Galerkin approximations additionally conserve linear and angular momentum, energy, vorticity, enstrophy (in the two-dimensional setting), and helicity (in the three-dimensional setting) in the inviscid limit, and they yield velocity fields whose error is independent of the pressure field (such methods are referred to as pressure robust).  Much like other isogeometric analysis methodologies, divergence-conforming B-splines exhibit a better approximation behavior per degree of freedom versus classical finite elements \cite{evans2009n}, and they are superior to classical finite elements in approximating both advective \cite{bazilevs2007variational} and diffusive \cite{evans2012discrete} processes.  The multi-level and algebraic structure of divergence-conforming B-splines has even enabled the development of scalable solution methodologies for divergence-conforming B-spline discretizations \cite{coley2018geometric}.  Divergence-conforming B-splines have also found application in multi-physics applications for which pointwise mass conservation is a desirable attribute.  For instance, they have recently been leveraged to overcome the issue of poor mass conservation in immersed methods for computational fluid-structure interaction \cite{KA17,casquero2018non,casquero2021divergence}.

While divergence-conforming B-spline Galerkin approximations have many attractive properties, like other Galerkin approximations, they are susceptible to advective instabilities.  A popular approach to address this problem is to use the Streamline Upwind Petrov--Galerkin/Pressure Stabilizing Petrov--Galerkin (SUPG/PSPG) method \cite{BR82,hughes1986new} in conjunction with grad-div stabilization \cite{jenkins2014parameter}.  While this provides robustness in the advection-dominated limit, it unfortunately destroys both the exact mass conservation and pressure robustness properties of divergence-conforming B-spline Galerkin approximations.  An alternative is to ignore the PSPG terms in the SUPG/PSPG method \cite{gelhard2005stabilized}, but this too destroys pressure robustness and additionally is not provably robust with respect to advection.  Building on the work of \cite{ten2018correct,TENEIKELDER20181135}, we recently developed a stabilized method that both preserves discrete (and for divergence-conforming discretizations, exact) mass conservation and is provably robust in the advection-dominated limit \cite{EV20}, but this comes with a loss of pressure robustness and the inclusion of an additional fine-scale pressure variable.  In another recent paper, a stabilized method was proposed for divergence-conforming discretizations of the linearized incompressible Navier--Stokes problem that preserves exact mass conservation, is pressure robust, and is provably robust with respect to advection \cite{ahmed2021pressure}, but it includes a residual-based least squares stabilization of the vorticity equation that is complicated and potentially expensive to evaluate.

In this paper, we propose a simple stabilization strategy for divergence-conforming B-spline approximations of the incompressible Navier--Stokes problem that alleviates advective instabilities without sacrificing exact mass conservation or pressure robustness.  Our strategy extends so-called edge stabilization methodologies \cite{BU04,BU06,BU0606,Bu07,Burman2007,BU08} to divergence-conforming B-spline approximations.  In particular, we penalize jumps in high-order normal derivatives of the velocity field across mesh facets.  A similar strategy was proposed in \cite{HO18,HO19} to alleviate pressure instabilities associated with isogeometric velocity/pressure pairs that do not satisfy a Babu\v{s}ka--Brezzi inf-sup condition, so following \cite{HO18,HO19}, we refer to our strategy as skeleton stabilization.  Due to the special structure of divergence-conforming B-splines, our proposed stabilization acts only on the components of the velocity field that are tangential to interior mesh facets.  This property is not shared by other edge stabilization methodologies, but it does apply to upwinding strategies used in $\textbf{H}$(div)-conforming discontinuous Galerkin methodologies \cite{schroeder2018divergence}.  We prove in this paper that skeleton-stabilized B-spline discretizations with weakly enforced tangential Dirichlet boundary conditions \cite{BA0702} are consistent, pressure robust, and energy stable and balance both linear and angular momentum, and we further show how to select the stabilization parameter appearing in our strategy to avoid excessive numerical dissipation in the diffusion-dominated limit as well as in the cross-wind direction.  We demonstrate numerically our strategy yields optimal convergence rates for the velocity field in both the $\textbf{L}^2$-norm and the $\textbf{H}^1$-norm, and we also show our strategy is capable of eliminating spurious small-scale structures that pollute divergence-conforming B-spline Galerkin approximations.  Finally, we examine the efficacy of our stabilization strategy as an Implicit Large Eddy Simulation (ILES) methodology using the well-known 3D Taylor--Green vortex benchmark problem.  All numerical experiments reported on in this paper were carried out using the tIGAr library \cite{KA19} which extends the popular FEniCS finite element automation software \cite{LO12} to isogeometric analysis.  The source code for each of these experiments is available in a public GitHub repository\footnote{\url{https://github.com/Grayson3455/SS-Div-Conforming-B-splines-Incompressible-NS}}.

An outline of this paper is as follows.  In Section \ref{NT}, we establish notation used throughout the paper and provide a review of divergence-conforming B-splines.  In Section \ref{NS}, we introduce the incompressible Navier--Stokes problem and our skeleton-stabilized scheme for solving this problem.  In Section \ref{ANALYSIS}, we provide guidance for how to select the stabilization parameter appearing in our skeleton-stabilized scheme, and we also prove our method is pressure robust, consistent, and energy stable.  In Section \ref{NE}, we examine the effectiveness of our skeleton-stabilized scheme using a steady manufactured solution, the lid-driven cavity problem, and the 3D Taylor--Green vortex problem.  We provide concluding remarks in Section \ref{cl}.

\section{Notation}
\label{NT}
We begin by establishing some conventions for denoting function spaces and mathematical operations.
\subsection{B-spline function spaces}
The core idea of isogeometric analysis (IGA) \cite{HU05,CoHuBa09} is to approximate partial differential equation (PDE) solutions using the spline function spaces popular in computer aided design (CAD).  Among the most popular spline spaces are B-splines.  We briefly review B-splines here, for the sake of establishing notation, but refer the unfamiliar reader to \cite{PiegTil97} for a more comprehensive definition.  The piecewise polynomial sections of B-splines defined on a $d$-dimensional parametric space are delimited in each direction by knot vectors $\boldsymbol{\xi}_i = [\xi_{i,1}, \cdots, \xi_{i,m_i}]$ for $i\in\{1,\ldots,d\}$, where $m_i$ is the number of knots in direction $i$.  We take $k_i$ to be the polynomial degree in direction $i$ for $i\in\{1,\ldots,d\}$.  We assume that each of the knot vectors is an open knot vectors, that is, we assume that the first and last knots of the knot vector $\boldsymbol{\xi}_i$ are repeated $p_i+1$ times.  B-spline functions can be used on non-rectangular domains by introducing a mapping $\mathcal{F}$ from the parametric domain to a physical domain over which PDEs are posed, i.e., $\mathcal{F}(\hat{\Omega}) = \Omega$, where $\hat{\Omega}$ is the parametric domain and $\Omega$ is the physical domain. 

Basis functions of a $d$-variate B-spline are defined as tensor products of univariate B-spline basis functions, which are computed recursively by the famous Cox--de Boor algorithm \cite{HU05}.  Smoothness at interfaces between elements is controlled by the multiplicities of the corresponding knots.  Let $\boldsymbol{\zeta}_i = [\zeta_{i,1}, \cdots, \zeta_{i,n_i}]$ be the vector of unique knot values in direction $i$ for $i\in\{1,\ldots,d\}$, where $n_i$ is the number of unique knots in direction $i$, and let $\beta_{i,j}$ be the multiplicity of the unique knot $\zeta_{i,j}$.  Then univariate basis functions in direction $i$ are $C^{k_i - \beta_{i,j}}$ differentiable at $\zeta_{i,j}$.  We encode this regularity in a set of regularity vectors $\boldsymbol{\alpha}_i = [\alpha_{i,1}, \cdots, \alpha_{i,n_i}]$ for $i\in\{1,\ldots,d\}$ where $\alpha_{i,j} = k_i - \beta_{i,j}$.  By construction, $\alpha_{i,1} = \alpha_{i,n_i} = -1$.  The space of tensor products of functions from $d$ univariate B-spline spaces is fully determined by the polynomial degrees $k_1, \ldots, k_d$, the regularity vectors $\boldsymbol{\alpha}_1, \ldots, \boldsymbol{\alpha}_d$, and the parametric mesh $\hat{\mathcal{T}}_h$ defined by the vectors of unique knot values $\boldsymbol{\zeta}_1, \ldots, \boldsymbol{\zeta}_d$, so we introduce the notation $\mathcal{B}_{\boldsymbol{\alpha}_1,\cdots,\boldsymbol{\alpha}_d}^{k_1,\cdots,k_d}(\hat{\mathcal{T}}_h)$ to denote this space.  For the sake of brevity, we later write $\mathcal{B}_{\boldsymbol{\alpha}_1,\cdots,\boldsymbol{\alpha}_d}^{k_1,\cdots,k_d}$ instead of $\mathcal{B}_{\boldsymbol{\alpha}_1,\cdots,\boldsymbol{\alpha}_d}^{k_1,\cdots,k_d}(\hat{\mathcal{T}}_h)$.  We will also adopt the convention $\boldsymbol{\alpha}_i - 1 = [-1,\alpha_{i,2}-1, \cdots,\alpha_{i,n_i-1}-1, -1]$.

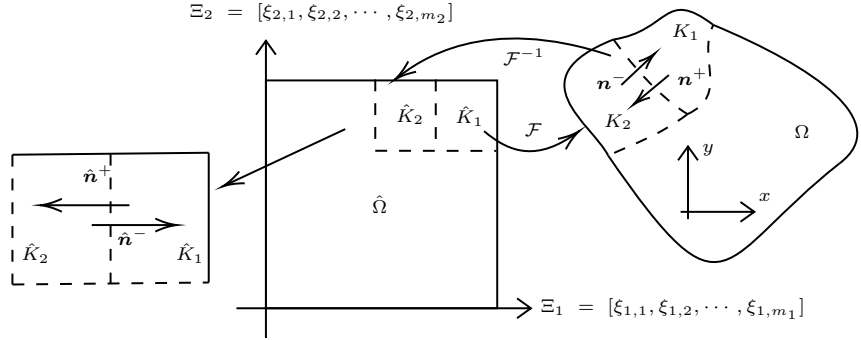
\begin{figure}[b!]
\centering
\tikzset{every picture/.style={line width=0.75pt}} 

\begin{tikzpicture}[x=0.75pt,y=0.75pt,yscale=-1,xscale=1]

\draw  (207,247) -- (355.5,247)(221.85,112) -- (221.85,262) (348.5,242) -- (355.5,247) -- (348.5,252) (216.85,119) -- (221.85,112) -- (226.85,119)  ;
\draw  [color={rgb, 255:red, 0; green, 0; blue, 0 }  ,draw opacity=1 ] (221.85,132) -- (338.5,132) -- (338.5,247) -- (221.85,247) -- cycle ;
\draw    (331.86,157.57) .. controls (345.51,169.27) and (359.16,172.42) .. (378.37,158.66) ;
\draw [shift={(379.86,157.57)}, rotate = 503.13] [color={rgb, 255:red, 0; green, 0; blue, 0 }  ][line width=0.75]    (10.93,-3.29) .. controls (6.95,-1.4) and (3.31,-0.3) .. (0,0) .. controls (3.31,0.3) and (6.95,1.4) .. (10.93,3.29)   ;
\draw    (395.86,119.57) .. controls (331.6,97.84) and (298.59,122.54) .. (288.41,130.39) ;
\draw [shift={(286.86,131.57)}, rotate = 323.13] [color={rgb, 255:red, 0; green, 0; blue, 0 }  ][line width=0.75]    (10.93,-3.29) .. controls (6.95,-1.4) and (3.31,-0.3) .. (0,0) .. controls (3.31,0.3) and (6.95,1.4) .. (10.93,3.29)   ;
\draw  (431,198.33) -- (468.17,198.33)(434.72,165.33) -- (434.72,202) (461.17,193.33) -- (468.17,198.33) -- (461.17,203.33) (429.72,172.33) -- (434.72,165.33) -- (439.72,172.33)  ;
\draw [color={rgb, 255:red, 0; green, 0; blue, 0 }  ,draw opacity=1 ] [dash pattern={on 4.5pt off 4.5pt}]  (397.17,111.33) .. controls (412,132) and (426,144) .. (434.5,149) ;
\draw [color={rgb, 255:red, 0; green, 0; blue, 0 }  ,draw opacity=1 ] [dash pattern={on 4.5pt off 4.5pt}]  (307.5,131.54) -- (307.5,167.54) ;
\draw [color={rgb, 255:red, 0; green, 0; blue, 0 }  ,draw opacity=1 ]   (397.17,111.33) .. controls (425.5,107) and (422.5,85) .. (447.5,104) ;
\draw [color={rgb, 255:red, 0; green, 0; blue, 0 }  ,draw opacity=1 ]   (394.5,170) .. controls (386.5,154) and (347.5,131) .. (397.17,111.33) ;
\draw [color={rgb, 255:red, 0; green, 0; blue, 0 }  ,draw opacity=1 ] [dash pattern={on 4.5pt off 4.5pt}]  (434.5,149) .. controls (420,160) and (407,164) .. (394.5,170) ;
\draw [color={rgb, 255:red, 0; green, 0; blue, 0 }  ,draw opacity=1 ] [dash pattern={on 4.5pt off 4.5pt}]  (434.5,149) .. controls (456,139) and (439,121) .. (447.5,104) ;
\draw [color={rgb, 255:red, 0; green, 0; blue, 0 }  ,draw opacity=1 ]   (511,174) .. controls (450,222) and (455,258) .. (394.5,170) ;
\draw [color={rgb, 255:red, 0; green, 0; blue, 0 }  ,draw opacity=1 ]   (511,174) .. controls (539,153) and (498,125) .. (447.5,104) ;
\draw [color={rgb, 255:red, 0; green, 0; blue, 0 }  ,draw opacity=1 ] [dash pattern={on 4.5pt off 4.5pt}]  (277,131.5) -- (277,167.5) ;
\draw [color={rgb, 255:red, 0; green, 0; blue, 0 }  ,draw opacity=1 ] [dash pattern={on 4.5pt off 4.5pt}]  (277,167.5) -- (338,167.57) ;
\draw    (261.86,156.57) -- (201.67,184.72) ;
\draw [shift={(199.86,185.57)}, rotate = 334.93] [color={rgb, 255:red, 0; green, 0; blue, 0 }  ][line width=0.75]    (10.93,-3.29) .. controls (6.95,-1.4) and (3.31,-0.3) .. (0,0) .. controls (3.31,0.3) and (6.95,1.4) .. (10.93,3.29)   ;
\draw [color={rgb, 255:red, 0; green, 0; blue, 0 }  ,draw opacity=1 ] [dash pattern={on 4.5pt off 4.5pt}]  (94,169.5) -- (94,235) ;
\draw [color={rgb, 255:red, 0; green, 0; blue, 0 }  ,draw opacity=1 ] [dash pattern={on 4.5pt off 4.5pt}]  (94,235) -- (193,234) ;
\draw [color={rgb, 255:red, 0; green, 0; blue, 0 }  ,draw opacity=1 ] [dash pattern={on 4.5pt off 4.5pt}]  (143.5,169) -- (143.5,234.5) ;
\draw [color={rgb, 255:red, 0; green, 0; blue, 0 }  ,draw opacity=1 ]   (193,168.5) -- (193,234) ;
\draw [color={rgb, 255:red, 0; green, 0; blue, 0 }  ,draw opacity=1 ]   (94,169.5) -- (193,168.5) ;
\draw    (153,195) -- (110,195) ;
\draw [shift={(108,195)}, rotate = 360] [color={rgb, 255:red, 0; green, 0; blue, 0 }  ][line width=0.75]    (10.93,-3.29) .. controls (6.95,-1.4) and (3.31,-0.3) .. (0,0) .. controls (3.31,0.3) and (6.95,1.4) .. (10.93,3.29)   ;
\draw    (134,205) -- (175,205) ;
\draw [shift={(177,205)}, rotate = 180] [color={rgb, 255:red, 0; green, 0; blue, 0 }  ][line width=0.75]    (10.93,-3.29) .. controls (6.95,-1.4) and (3.31,-0.3) .. (0,0) .. controls (3.31,0.3) and (6.95,1.4) .. (10.93,3.29)   ;
\draw    (425.1,129.3) -- (408.64,143.02) ;
\draw [shift={(407.1,144.3)}, rotate = 320.19] [color={rgb, 255:red, 0; green, 0; blue, 0 }  ][line width=0.75]    (8.74,-2.63) .. controls (5.56,-1.12) and (2.65,-0.24) .. (0,0) .. controls (2.65,0.24) and (5.56,1.12) .. (8.74,2.63)   ;
\draw    (401.1,134.3) -- (416.64,119.67) ;
\draw [shift={(418.1,118.3)}, rotate = 496.74] [color={rgb, 255:red, 0; green, 0; blue, 0 }  ][line width=0.75]    (8.74,-2.63) .. controls (5.56,-1.12) and (2.65,-0.24) .. (0,0) .. controls (2.65,0.24) and (5.56,1.12) .. (8.74,2.63)   ;

\draw (182,91.4) node [anchor=north west][inner sep=0.75pt]  [font=\scriptsize]  {$\Xi _{2} \ =\ [ \xi _{2,1} ,\xi _{2,2} ,\cdots ,\xi _{2,m_{2}}]$};
\draw (351,152.4) node [anchor=north west][inner sep=0.75pt]  [font=\scriptsize]  {$\mathcal{F}$};
\draw (340,114.4) node [anchor=north west][inner sep=0.75pt]  [font=\scriptsize]  {$\mathcal{F}^{-1}$};
\draw (274,188.4) node [anchor=north west][inner sep=0.75pt]  [font=\scriptsize]  {$\hat{\si{\ohm}}$};
\draw (487,153.4) node [anchor=north west][inner sep=0.75pt]  [font=\scriptsize]  {$\si{\ohm}$};
\draw (425,102.4) node [anchor=north west][inner sep=0.75pt]  [font=\scriptsize]  {$K_{1}$};
\draw (469,187.4) node [anchor=north west][inner sep=0.75pt]  [font=\scriptsize]  {$x$};
\draw (440.5,162.4) node [anchor=north west][inner sep=0.75pt]  [font=\scriptsize]  {$y$};
\draw (359,240.4) node [anchor=north west][inner sep=0.75pt]  [font=\scriptsize]  {$\Xi _{1} \ =\ [ \xi _{1,1} ,\xi _{1,2} ,\cdots ,\xi _{1,m_{1}}]$};
\draw (391,147.4) node [anchor=north west][inner sep=0.75pt]  [font=\scriptsize]  {$K_{2}$};
\draw (286,142.4) node [anchor=north west][inner sep=0.75pt]  [font=\scriptsize]  {$\hat{K}_{2}$};
\draw (316,143.4) node [anchor=north west][inner sep=0.75pt]  [font=\scriptsize]  {$\hat{K}_{1}$};
\draw (97,212.4) node [anchor=north west][inner sep=0.75pt]  [font=\scriptsize]  {$\hat{K}_{2}$};
\draw (175,212.4) node [anchor=north west][inner sep=0.75pt]  [font=\scriptsize]  {$\hat{K}_{1}$};
\draw (128,173.4) node [anchor=north west][inner sep=0.75pt]  [font=\scriptsize]  {$\hat{\boldsymbol{n}}^{+}$};
\draw (145.5,205.15) node [anchor=north west][inner sep=0.75pt]  [font=\scriptsize]  {$\hat{\boldsymbol{n}}^{-}$};
\draw (428,124.4) node [anchor=north west][inner sep=0.75pt]  [font=\scriptsize]  {$\boldsymbol{n}^{+}$};
\draw (387,126.4) node [anchor=north west][inner sep=0.75pt]  [font=\scriptsize]  {$\boldsymbol{n}^{-}$};

\end{tikzpicture}  
\caption{Diagram illustrating our notation for B-spline spaces, with emphasis on the mesh skeleton.}
\label{fig:iga}
\end{figure}

When generalizing ideas from classical finite element analysis (FEA) to IGA, it can be helpful to describe spline spaces in the language of FEA.  We can define a physical mesh
\begin{equation}
\mathcal{T}_h := \left\{ \mathcal{F}(\hat{K}): \hat{K} \in \hat{\mathcal{T}}_h \right\}
\end{equation}
such that $\Omega = \bigcup_{K \in \mathcal{T}_h} \overline{K}$.  The size of each element $K \in \mathcal{T}_h$ in the physical mesh can be calculated as
\begin{equation}
h_K \coloneqq \text{diam}(K) = \sup_{\boldsymbol{x,y} \in K} |\boldsymbol{x} - \boldsymbol{y}|\text{ ,}
\end{equation}
and the mesh $\mathcal{T}_h$ is assumed to be quasi-uniform with global mesh size $h$, i.e., $\exists C,h > 0$ such that $\forall K\in\mathcal{T}_h$
\begin{equation}
    Ch \leq h_K \leq h/C\text{ .}
\end{equation}
In this paper, the facets between elements and on the boundary of the mesh---forming the mesh skeleton, of  (topological) dimension $d-1$---are of particular interest.  The sets of interior and boundary facets are denoted
\begin{align}
\mathcal{E}_0  &\coloneqq \{e ~|~ e=\partial K_1 \cap \partial K_2, e \neq \emptyset, K_1 \neq K_2, K_1, K_2 \in \mathcal{T}_h\}\text{ ,}\\
\mathcal{E}_{\partial}  &\coloneqq \{e ~|~ e=\partial K \cap \partial \Omega, e \neq \emptyset, K \in \mathcal{T}_h \}\text{ .}
\end{align}
Each interior facet $e \in \mathcal{E}_0$ is shared between a ``positive'' element $K_1$ and a ``negative'' element $K_2$.  We define $\boldsymbol{n}^+$ to be the outward facing normal to $K_1$ and $\boldsymbol{n}^-$ to be the outward facing normal to $K_2$.  We employ the convention $\boldsymbol{n} = \boldsymbol{n}^+ = -\boldsymbol{n}^-$.  For a field $\phi$ that is defined over both $K_1$ and $K_2$, we define $\phi^+(\boldsymbol{x}) = \lim_{\epsilon \rightarrow 0^+} \phi(\boldsymbol{x} - \epsilon\boldsymbol{n}^+)$, $\phi^-(\boldsymbol{x}) = \lim_{\epsilon \rightarrow 0^+} \phi(\boldsymbol{x} - \epsilon\boldsymbol{n}^-)$, and $[\![ \phi(\boldsymbol{x}]\!] = \phi^+(\boldsymbol{x}) - \phi^-(\boldsymbol{x})$ for $\boldsymbol{x} \in e$.  Figure \ref{fig:iga} illustrates the relationship between parametric space and the physical space, as well as how interior facets ($--$), boundary facets ($-$), and facet normal vectors are defined in each element of $\hat{\Omega}$ and $\Omega$.

\subsection{Sobolev spaces and norms}
We will state weak forms of PDEs using the following vector-valued Sobolev spaces:
\begin{align}
\mathbf{H}^1_0(\Omega) &\coloneqq \{\boldsymbol{u} \in \mathbf{H}^1(\Omega)~\vert~\boldsymbol{u} = \boldsymbol{0} \ \text{at} \ \partial \Omega \}\text{ ,}\\
\mathbf{H}_{\boldsymbol{n}}^1(\Omega) &\coloneqq \{\boldsymbol{u} \in \mathbf{H}^1(\Omega)~\vert~\boldsymbol{u} \cdot \boldsymbol{n} = 0 \ \text{at} \ \partial \Omega \}\text{ ,}\\
\mathbf{H}(\text{div}, \Omega)&\coloneqq \{ \boldsymbol{u} \in \mathbf{L}^2(\Omega)~\vert~\nabla \cdot \boldsymbol{u} \in L^2(\Omega) \}\text{ ,}\\
L^2_0 &\coloneqq \left\{ p \in L^2(\Omega)~\left\vert~ \int_{\Omega} p \ d\Omega = 0 \right\}\right.\text{ ,}
\end{align}
where $\Omega\subset\mathbb{R}^d$ is a domain, $\mathbf{H}^1(\Omega) = (H^1(\Omega))^d$, $\mathbf{L}^2(\Omega) = (L^2(\Omega))^d$, $\boldsymbol{n}$ is the outward-facing unit normal to $\partial\Omega$, and boundary conditions are understood in the sense of traces.  We shall consider time-dependent problems in this work, but, because the topic of this paper is principally a method of stabilizing the {\em spatial} discretization of such problems, we will abuse notation by suppressing the time-dependence of function spaces.

For analyzing discretizations, it is also convenient to define the following ``broken'' Sobolev norms, which are sums over norms on mesh entities:
\begin{equation*}
\begin{aligned}
&||\cdot||^2_{\mathbf{L}^2(\mathcal{T}_h)}   \coloneqq  \sum_{K \in \mathcal{T}_h} ||\cdot||^2_{\mathbf{L}^2(K)}\text{ ,}
&&||\cdot||^2_{\mathbf{L}^2(\mathcal{E}_{\partial})}    \coloneqq \sum_{e \in \mathcal{E}_{\partial}} ||\cdot||^2_{\mathbf{L}^2(e)}\text{ ,}
&&||\cdot||^2_{\mathbf{L}^2(\mathcal{E}_{0})}   \coloneqq \sum_{e \in \mathcal{E}_{0}} ||\cdot||^2_{\mathbf{L}^2(e)}\text{ .}
\end{aligned}
\end{equation*}
Due to the prevalence of $L^2$ inner products throughout this work, we reserve the generic notation $(\cdot,\cdot)$ for them, unless otherwise specified, i.e.,
\begin{equation}
    (f,g) = \int_\Omega fg\,d\Omega\quad\text{,}\quad(\boldsymbol{u},\boldsymbol{v}) = \int_\Omega\boldsymbol{u}\cdot\boldsymbol{v}\,d\Omega\quad\text{,}\quad(\boldsymbol{T},\boldsymbol{S}) = \int_\Omega\boldsymbol{T}:\boldsymbol{S}\,d\Omega
\end{equation}
for scalar fields $f$ and $g$, vector fields $\boldsymbol{u}$ and $\boldsymbol{v}$, and tensor fields $\boldsymbol{T}$ and $\boldsymbol{S}$.  The generic norm $\Vert\cdot\Vert$ will similarly refer to the $L^2$ norm of its argument, unless otherwise stated.  Above, we adopt the convention $\boldsymbol{C} : \boldsymbol{D} = C_{ij} D_{ij}$ for two tensors $\boldsymbol{C} = \sum_{i=1}^d \sum_{j=1}^d C_{ij} \boldsymbol{e}_i \otimes \boldsymbol{e}_j$ and $\boldsymbol{D} = \sum_{i=1}^d \sum_{j=1}^d D_{ij} \boldsymbol{e}_i \otimes \boldsymbol{e}_j$.  Later in this paper, we also take the gradient of a vector $\boldsymbol{a} = \sum_{j=1}^d a_{j} \boldsymbol{e}_j$ to be $\nabla \boldsymbol{a} := \sum_{i=1}^d \sum_{j=1}^d \frac{\partial a_j}{\partial x_i} \boldsymbol{e}_i \otimes \boldsymbol{e}_j$ where $\left\{ \boldsymbol{e}_j \right\}_{j=1}^d$ is the standard basis for $\mathbb{R}^d$, and we take the divergence of a tensor $\boldsymbol{A} = \sum_{i=1}^d \sum_{j=1}^d A_{ij} \boldsymbol{e}_i \otimes \boldsymbol{e}_j$ to be $\nabla \cdot \boldsymbol{A} := \sum_{i=1}^d \sum_{j=1}^d \frac{\partial A_{ij}}{\partial x_i} \boldsymbol{e}_j$.

\subsection{Divergence-conforming B-splines}

B-spline spaces can be used to construct discrete subcomplexes of the de Rham complex \cite{AR18}, enabling discretizations that exactly satisfy conservation laws of continuous PDE systems.  In particular, for divergence-conforming discretizations of the Navier--Stokes equations with exact mass conservation, we are concerned with the commuting diagram
\begin{equation}\label{eq:commuting-diagram}
\begin{tikzcd}[]
\mathbf{H}^1_{\boldsymbol{n}}(\Omega)\arrow[d, "\Pi_{\boldsymbol{v}}"]  \arrow[r, "{\nabla \cdot}"] & L^2_0(\Omega)\arrow[d, "\Pi_q"] \\
\pmb{\mathcal{V}}_h  \arrow[r, "{\nabla \cdot}"] & \mathcal{Q}_h 
\end{tikzcd}
\end{equation}
where $\mathbf{H}^1_{\boldsymbol{n}}(\Omega)$ is the velocity space, $L^2_0(\Omega)$ is the pressure space, $\pmb{\mathcal{V}}_h \subset \mathbf{H}^1_{\boldsymbol{n}}(\Omega)$ and $\mathcal{Q}_h \subset L^2_0(\Omega)$ are their discrete counterparts, and $\Pi_{\boldsymbol{v}}$ and $\Pi_q$ are suitable projection operators, as defined in \cite{buffa2011isogeometric}.  In 3D ($d=3$), pulled-back discrete spaces on the B-spline parameter space are defined by
\begin{align}
\hat{\pmb{\mathcal{V}}}_h = \iota_{\pmb{\mathcal{V}}} (\pmb{\mathcal{V}}_h) & \coloneqq  \mathcal{B}^{k_1, k_2-1, k_3-1}_{\boldsymbol{\alpha}_1, \boldsymbol{\alpha}_2-1, \boldsymbol{\alpha}_3-1} \times  \mathcal{B}^{k_1-1, k_2, k_3-1}_{\boldsymbol{\alpha}_1-1, \boldsymbol{\alpha}_2, \boldsymbol{\alpha}_3-1} \times  \mathcal{B}^{k_1-1, k_2-1, k_3}_{\boldsymbol{\alpha}_1-1, \boldsymbol{\alpha}_2-1, \boldsymbol{\alpha}_3}~\cap~\iota_{\pmb{\mathcal{V}}}\left(\mathbf{H}^1_{\boldsymbol{n}}(\Omega)\right)\text{ ,}\\
\hat{\mathcal{Q}}_h = \iota_{\mathcal{Q}}({\mathcal{Q}}_h)  & \coloneqq  \mathcal{B}^{k_1-1, k_2-1, k_3-1}_{\boldsymbol{\alpha}_1-1, \boldsymbol{\alpha}_2-1, \boldsymbol{\alpha}_3-1}~\cap~\iota_{\mathcal{Q}}(L^2_0(\Omega))\text{ ,}
\end{align}
where $\iota_{\mathcal{\pmb{V}}}$ and $\iota_{\mathcal{Q}}$ are pullback mappings defined by
\begin{align}
    \iota_{\pmb{\mathcal{V}}}(\boldsymbol{v}) =& \operatorname{det}(D\mathcal{F})(D\mathcal{F})^{-1}(\boldsymbol{v}\circ\mathcal{F})\text{ ,}\\
    \iota_{\mathcal{Q}}(q) =& \operatorname{det}(D\mathcal{F})(q\circ\mathcal{F})\text{ ,}
\end{align}
The discrete velocity space $\pmb{\mathcal{V}}_h$ is polynomially-complete up to the degree
\begin{equation}
    k' = \min_{i\in\{1,2,3\}}\{k_i-1\}\text{ ,}
\end{equation}
which we identify as the overall ``degree'' of the velocity--pressure space $\pmb{\mathcal{V}}_h\times\mathcal{Q}_h$.  Functions in $\pmb{\mathcal{V}}_h$ are globally $C^{\alpha'}$-continuous where
\begin{equation}
    \alpha' = \min_{i\in\{1,2,3\}} \min_{2 \leq j \leq n_i-1} \{\alpha_{i,j}-1\}\text{ .}
\end{equation}
For maximally-smooth splines, we have $\alpha' = k'-1$.  To guarantee the commuting diagram \eqref{eq:commuting-diagram} holds, we require that $\alpha' \geq 0$.  Commutation of \eqref{eq:commuting-diagram} ensures that the bottom row of \eqref{eq:commuting-diagram} is a subcomplex of the Stokes complex in the first row.  In particular, the divergence of every function from $\pmb{\mathcal{V}}_h$ is in $\mathcal{Q}_h$, so that the discrete mass conservation equation
\begin{equation}
    (\nabla\cdot\boldsymbol{u}_h,q_h) = 0\quad\forall q_h\in\mathcal{Q}_h
\end{equation}
implies strong mass conservation, i.e., $\nabla\cdot\boldsymbol{u}_h = 0$ at every point in $\Omega$.  The velocity--pressure space $\pmb{\mathcal{V}}_h\times\mathcal{Q}_h$ is entirely determined by $k'$, $\alpha'$, and the parametric mesh $\hat{\mathcal{T}}_h$ when the polynomial degrees in each parametric direction and the multiplicities of each internal knot are identical.  We assume this to be the case for the remainder of the paper.

\begin{remark}
The spaces $\pmb{\mathcal{V}}_h$ are $\mathbf{H}^1$-conforming generalizations of the $\mathbf{H}(\operatorname{div})$-conforming Raviart--Thomas spaces \cite{raviart1975mixed} from classical finite element analysis.
\end{remark}

\begin{remark}
The construction of 2D velocity and pressure spaces is similar (cf. \cite{EV12}).
\end{remark}

\section{Problem and spatial discretization}
\label{NS}
We now introduce the incompressible Navier--Stokes problem and our spatial discretization of it using skeleton stabilization.

\subsection{The incompressible Navier--Stokes equations}
This paper concerns the incompressible Navier--Stokes equations.  For simplicity, we will consider this system with pure Dirichlet boundary conditions, which is given in strong form as
\begin{equation}
(\mathcal{S}): 
\begin{cases}
\textrm{Find} \ \boldsymbol{u}: \bar{\Omega}\times (0,T]  \to \mathbb{R}^d, p: \bar{\Omega}\times (0,T]  \to \mathbb{R} \ \textrm{s.t.}\\[7.5pt]
\begin{aligned}
 \frac{\displaystyle \partial \boldsymbol{u}} {\displaystyle \partial t} + \nabla \cdot (\boldsymbol{u} \otimes \boldsymbol{u}) + \nabla p - 2\nu \nabla \cdot (\nabla^s\boldsymbol{u})  &= \boldsymbol{f} \ \  &&\textrm{in} \ \Omega \times (0,T]\\
 \nabla \cdot \boldsymbol{u} &= 0 \ \  &&\textrm{in} \ \Omega \times (0,T]\\
\boldsymbol{u} &= \boldsymbol{u}_D \ \ &&\textrm{on} \ \partial \Omega \times (0,T] \\
\boldsymbol{u} &= \boldsymbol{u}_0 \ \ &&\textrm{in} \ \Omega \ \text{at} \ t=0
\end{aligned}
\end{cases}
\end{equation}
where the unknown solution fields $\boldsymbol{u}$ and $p$ have the interpretations of velocity and pressure, $\boldsymbol{f}:\bar{\Omega}\times(0,T\rbrack\to\mathbb{R}^d$ is a given source term, interpreted as a body force field, $\boldsymbol{u}_D:\partial\Omega\times (0,T\rbrack\to\mathbb{R}^d$ is given Dirichlet boundary data, and $\boldsymbol{u}_0:\bar{\Omega}\to\mathbb{R}^d$ is the given initial velocity field.   The real-valued coefficient $\nu > 0$ is the kinematic viscosity, which can be derived from a Reynolds number $\textrm{Re} = UL/\nu$ for some velocity scale $U>0$ and length scale $L>0$.  In our later numerical experiments, we set $U=1$ and $L=1$.  The operator $\nabla^s$ is the symmetric part of the gradient operator, i.e., $\nabla^s \boldsymbol{u} = \frac{\nabla \boldsymbol{u} + \left(\nabla \boldsymbol{u}\right)^T}{2}$. Finite element and isogeometric methods are typically derived from the weak or variational form of the problem,
\begin{equation}
(\mathcal{W}): 
\begin{cases}
\textrm{Find} \ [\boldsymbol{u}, p] \in \pmb{\mathcal{U}} \times \mathcal{Q}, \ \ \textrm{s.t.} \ \textrm{that} \ \forall[\boldsymbol{v}, q] \in \pmb{\mathcal{V}} \times \mathcal{Q},\\[7.5pt]
\left(\frac{\displaystyle \partial \boldsymbol{u}} {\displaystyle \partial t}, \boldsymbol{v}\right) + A(\boldsymbol{u},\boldsymbol{v})+ C(\boldsymbol{u};\boldsymbol{u},\boldsymbol{v}) + B(\boldsymbol{u},q) - B(\boldsymbol{v},p) = L(\boldsymbol{v}) \\
\end{cases}
\label{weakform}
\end{equation}
where $\pmb{\mathcal{V}} = \mathbf{H}^1_0(\Omega)$ and $\mathcal{Q} = L^2_0(\Omega)$ are velocity and pressure test function spaces, $\pmb{\mathcal{U}}$ is a velocity trial function space satisfying $\boldsymbol{u} = \boldsymbol{u}_D$ on $\partial\Omega$ (in a trace sense), and the variational forms $A$, $B$, $C$, and $L$ are defined by
\begin{align}
    A(\boldsymbol{u},\boldsymbol{v}) &= \left(2\nu\nabla^s\boldsymbol{u},\nabla^s\boldsymbol{v}\right)\text{ ,}\\
    B(\boldsymbol{u},q) &= (\nabla\cdot\boldsymbol{u},q)\text{ ,}\\
    C(\boldsymbol{w};\boldsymbol{u},\boldsymbol{v}) &= -(\boldsymbol{w} \otimes \boldsymbol{u}, \nabla \boldsymbol{v})\text{ ,}\\
    L(\boldsymbol{v}) &= (\boldsymbol{f},\boldsymbol{v})\text{ .}
\end{align}

\subsection{The stabilized semi-discrete problem}\label{sec:semidisc}
The main subject of this paper is a stabilized spatial semi-discretization of the problem $(\mathcal{W})$:
\begin{equation}
(\mathcal{W}_h): 
\begin{cases}
\textrm{Find} \ [\boldsymbol{u}_h, p_h] \in \pmb{\mathcal{U}}_h \times \mathcal{Q}_h, \ \ \textrm{s.t.} \ \textrm{that} \ \forall[\boldsymbol{v}_h, q_h] \in \pmb{\mathcal{V}}_h \times \mathcal{Q}_h,\\[7.5pt]
\left(\frac{\displaystyle \partial \boldsymbol{u}_h} {\displaystyle \partial t}, \boldsymbol{v}_h\right) + A_h(\boldsymbol{u}_h,\boldsymbol{v}_h)+ C(\boldsymbol{u}_h;\boldsymbol{u}_h,\boldsymbol{v}_h) + B(\boldsymbol{u}_h,q_h) - B(\boldsymbol{v}_h,p_h) = L_h(\boldsymbol{v}_h) \\
\end{cases}
\label{eq:weakform-semidisc}
\end{equation}
where divergence-conforming B-splines are used for the discrete spaces $\pmb{\mathcal{U}}_h$, $\pmb{\mathcal{V}}_h$, and $\mathcal{Q}_h$.  Only the normal component of the velocity is enforced strongly in the discrete trial space, and the discrete test space is, correspondingly, a subset of $\mathbf{H}^1_{\boldsymbol{n}}$, rather than $\mathbf{H}^1_0$.  The forms $A_h$ and $L_h$ are modified versions of $A$ and $L$, augmented to include stabilization of advection and Nitsche-type weak enforcement of the tangential component of the Dirichlet boundary condition:
\begin{align}
    A_h(\boldsymbol{u},\boldsymbol{v}) &= A(\boldsymbol{u},\boldsymbol{v}) - (2\nu\boldsymbol{n}\cdot\nabla^s\boldsymbol{u},\boldsymbol{v})_{\partial\Omega} - (2\nu\boldsymbol{n}\cdot\nabla^s\boldsymbol{v},\boldsymbol{u})_{\partial\Omega} + \left(2\nu\frac{C_\text{Nit}}{h}\boldsymbol{u},\boldsymbol{v}\right)_{\partial\Omega} + J_h(\boldsymbol{u},\boldsymbol{v})\text{ ,}\\
    L_h(\boldsymbol{v}) &= L(\boldsymbol{v}) - (2\nu\boldsymbol{n}\cdot\nabla^s\boldsymbol{v},\boldsymbol{u}_D)_{\partial\Omega} + \left(2\nu\frac{C_\text{Nit}}{h}\boldsymbol{u}_D,\boldsymbol{v}\right)_{\partial\Omega}\text{ ,}
\end{align}
where the additional form $J_h$ introduces the skeleton stabilization of advection:
\begin{equation}	
J_h(\boldsymbol{u}_h,\boldsymbol{v}_h) \coloneqq \sum_{e \in \mathcal{E}_0} \left( \eta [\![\partial^{\alpha' + 1}_{\boldsymbol{n}} \boldsymbol{u}_h]\!], [\![ \partial^{\alpha' + 1}_{\boldsymbol{n}} \boldsymbol{v}_h]\!]\right)_e
\label{eq:jj}
\end{equation}
wherein
\begin{equation}
\partial_{\boldsymbol{n}}^{\alpha' + 1} \coloneqq \underbrace{\partial_{\boldsymbol{n}} \cdots \partial_{\boldsymbol{n}}}_{\text{$\alpha' + 1$ times}}
\label{nj}
\end{equation}
and $\partial_{\boldsymbol{n}} := \boldsymbol{n} \cdot \nabla$.  The dimensionless constant $C_\text{Nit}$ is associated with the Nitsche-type weak enforcement of the tangential Dirichlet boundary condition, which we select as $C_{\text{Nit}} = 5(k'+1)$ in this paper, following \cite{EV12}.  A choice for the stabilization parameter $\eta$ appearing in $J_h$ will be proposed based on dimensional analysis in Section \ref{sec:units}, where it will depend on element size and Reynolds number.  
In the lowest-order case of $k'=1$ with $\alpha' = 0$, we penalize jumps in first normal derivatives of velocity, and when $\alpha' > 0$, we penalize jumps in higher order normal derivatives.

\begin{remark}
The Nitsche-based enforcement of the tangential Dirichlet boundary condition can be viewed as stabilization for problems with boundary layers, as discussed further in \cite{BA0702,EV12}.
\end{remark}

\begin{remark}
The skeleton stabilization form $J_h$ reduces to a classical edge stabilization form when $\alpha' = 0$ \cite{Bu07}.
\end{remark}

\begin{remark}
For discrete velocity fields in $\pmb{\mathcal{U}}_h$, the component of velocity normal to interior mesh facets is $C^{\alpha'+1}$-continuous while the components of velocity tangential to interior mesh facets are $C^{\alpha'}$-continuous.  Consequently, the skeleton stabilization form $J_h$ only acts on the components of velocity tangential to interior mesh facets, and we can write
\begin{equation}	
J_h(\boldsymbol{u}_h,\boldsymbol{v}_h) = \sum_{e \in \mathcal{E}_0} \eta\left( [\![\partial^{\alpha' + 1}_{\boldsymbol{n}} \left( \boldsymbol{n} \times \boldsymbol{u}_h \times \boldsymbol{n} \right)]\!], [\![ \partial^{\alpha' + 1}_{\boldsymbol{n}} \left( \boldsymbol{n} \times \boldsymbol{v}_h \times \boldsymbol{n} \right) ]\!]\right)_e\text{ .}
\end{equation}
This property does not hold when edge stabilization is applied to divergence-conforming Scott--Vogelius finite element approximations of the incompressible Navier--Stokes problem \cite{BU08}, but it does hold for upwinding strategies used in $\textup{\textbf{H}}(\textup{div})$-conforming discontinuous Galerkin methodologies \cite{schroeder2018divergence}.
\end{remark}

\begin{remark}
It can be shown that
\begin{equation}	
J_h(\boldsymbol{u}_h,\boldsymbol{v}_h) = \sum_{e \in \mathcal{E}_0} \eta_{\boldsymbol{u}_h} \left( [\![\partial^{\alpha' + 1}_{\boldsymbol{u}_h} \boldsymbol{u}_h ]\!], [\![ \partial^{\alpha' + 1}_{\boldsymbol{u}_h} \boldsymbol{v}_h ]\!]\right)_e
\end{equation}
where $\eta_{\boldsymbol{u}_h} = \eta/|\boldsymbol{u}_h\cdot\boldsymbol{n}|^{2(\alpha'+1)}$,
\begin{equation}
\partial_{\boldsymbol{u}_h}^{\alpha' + 1} \coloneqq \underbrace{\partial_{\boldsymbol{u}_h} \cdots \partial_{\boldsymbol{u}_h}}_{\text{$\alpha' + 1$ times}}\text{ ,}
\end{equation}
and $\partial_{\boldsymbol{u}_h} := \boldsymbol{u}_h \cdot \nabla$.  Consequently, the skeleton stabilization form $J_h$ effectively penalizes jumps in convective derivatives.
When $\alpha' = 0$, the skeleton stabilization form $J_h$ further reduces to
\begin{equation}	
J_h(\boldsymbol{u}_h,\boldsymbol{v}_h) = \sum_{e \in \mathcal{E}_0} \eta_{\boldsymbol{u}_h} \left( [\![\partial_{\boldsymbol{u}_h} \boldsymbol{u}_h \times \boldsymbol{n} ]\!], [\![ \partial_{\boldsymbol{u}_h} \boldsymbol{v}_h \times \boldsymbol{n} ]\!]\right)_e\text{ .}
\end{equation}
This is precisely the jump penalty term appearing in a recent stabilization methodology for divergence-conforming discretizations of the incompressible Navier--Stokes problem based on least squares stabilization of the vorticity equation up to the choice of the stabilization parameter $\eta$ \cite{ahmed2021pressure}.
\end{remark}

\section{Analysis of the scheme}
\label{ANALYSIS}
We now study the properties of the semidiscrete problem ($\mathcal{W}_h$) analytically, to determine an appropriate choice of stabilization parameter (Section \ref{sec:units}) and prove several interesting properties of the method (Section \ref{sec:proof}).

\subsection{Choice of stabilization parameter}
\label{sec:units}
In this section, we motivate a particular choice of the stabilization parameter $\eta$ from \eqref{eq:jj}, using dimensional analysis. First, consider the units of the advection term,
\begin{equation}\label{eq:adv-units}
\left\lbrack (-\boldsymbol{u}_h \otimes \boldsymbol{u}_h , \nabla \boldsymbol{v}_h )\right\rbrack = \frac{\left\lbrack|\boldsymbol{u}_h|^2\right\rbrack}{L} L^d = \left\lbrack|\boldsymbol{u}_h|^2\right\rbrack L^{d - 1}\text{ ,}
\end{equation}
where square brackets are used to denote a ``dimensions of'' operator and $L$ denotes dimensions of length.  The stabilization term must have the same units, for a dimensionally-covariant formulation.  For the stabilization term, we have
\begin{equation}\label{eq:j-units}
    \left\lbrack J(\boldsymbol{u}_h,\boldsymbol{v}_h)\right\rbrack = \lbrack\eta\rbrack  \frac{\left\lbrack |\boldsymbol{u}_h|\right\rbrack}{L^{\alpha' +1}}\frac{1}{L^{\alpha' +1}} L^{d-1} = \lbrack\eta\rbrack \frac{L^{d-1}\left\lbrack |\boldsymbol{u}_h|\right\rbrack}{L^{2\alpha' + 2}}\text{ .}
\end{equation}
Comparing \eqref{eq:adv-units} and \eqref{eq:j-units}, we can determine the appropriate units for $\eta$ to be
\begin{equation}\label{eq:eta-units}
    \lbrack\eta\rbrack = L^{2\alpha'+2}\left\lbrack\vert\boldsymbol{u}_h\vert\right\rbrack\text{ .}
\end{equation}
Other criteria on $\eta$ include that we want it to provide less stabilization at lower Reynolds numbers and avoid excessive crosswind diffusion.  In light of \eqref{eq:eta-units} and these additional considerations, we propose to take
\begin{equation}
\eta \coloneqq  \gamma \text{min}\{\text{Re}_h, 1.0\} h^{2\alpha' + 2} |\boldsymbol{u}_h \cdot \boldsymbol{n}|\text{ ,}
\label{Rea1} 
\end{equation}
where the mesh size $h$ is taken as a length scale, and $\text{Re}_h$ is the element Reynolds number
\begin{equation}
\text{Re}_h = \frac{|\boldsymbol{u}_h| h}{\nu}\text{ ,}
\end{equation}
and $\gamma\geq 0$ is a dimensionless parameter.  In the numerical experiments of this work, we relate $\gamma$ to $\alpha'$ by
\begin{equation}
\gamma = \delta\times 10^{-(\alpha' + 2)}\text{ ,}
\end{equation}
which we find to be effective in practice with $\delta=1$, although better results can be obtained in certain problems by choosing moderately higher values of $\delta$.

\begin{remark}
The stabilization parameter $\eta$ we propose differs considerably from those commonly used in edge stabilization (see, e.g., \cite{BU08}).  First of all, the velocity scaling $| \boldsymbol{u} \cdot \boldsymbol{n}|$ appearing in our stabilization parameter is typically taken to be  $|\boldsymbol{u}|$ instead.  However, we have found this results in excessive numerical dissipation.  As our stabilization parameter scales with $| \boldsymbol{u} \cdot \boldsymbol{n}|$, it is zero when the velocity field is tangent to a mesh facet, just like upwinding strategies used in $\textup{\textbf{H}}(\textup{div})$-conforming discontinuous Galerkin methodologies \cite{schroeder2018divergence}.  Second of all, edge stabilization schemes typically do not depend on element Reynolds number, but we have also found this results in excessive numerical dissipation, especially in the diffusion-dominated limit.  Our form of the stabilization parameter $\eta$ is motivated by standard choices of the stabilization parameter $\tau_M$ in the Streamline Upwind Petrov--Galerkin (SUPG) method \cite{BR82}.  In particular, $\tau_M$ typically scales like $h/|\boldsymbol{u}_h|$ in the advection-dominated limit and $h^2/\nu$ in the diffusion-dominated limit.  In fact, $\tau_M$ must scale like $h^2/\nu$ in the diffusion-dominated limit in order for the SUPG method to maintain coercivity \cite{hughes2018multiscale}.  This is not the case, however, for the skeleton stabilization scheme considered in this paper.
\end{remark}

\subsection{Properties of the scheme}
\label{sec:proof}
For simplicity, our analysis assumes homogeneous Dirichlet boundary conditions, such that $\pmb{\mathcal{U}}_h = \pmb{\mathcal{V}}_h$. We also find it convenient to take advantage of the divergence-conforming discretization, rewriting the semidiscrete problem ($\mathcal{W}_h$) in the following equivalent form:
\begin{equation}
(\mathring{\mathcal{W}}_h): 
\begin{cases}
\textrm{Find} \ \boldsymbol{u}_h \in \mathring{\pmb{\mathcal{V}}}_h, \ \ \textrm{s.t.} \ \textrm{that} \ \forall\boldsymbol{v}_h \in \mathring{\pmb{\mathcal{V}}}_h,\\[7.5pt]
\left(\frac{\displaystyle \partial \boldsymbol{u}_h} {\displaystyle \partial t}, \boldsymbol{v}_h\right) + \mathscr{A}_\text{stb}(\boldsymbol{u}_h,\boldsymbol{v}_h) = L_h(\boldsymbol{v}_h) \\
\end{cases}
\end{equation}
where
\begin{equation}
    \mathscr{A}_\text{stb}(\boldsymbol{u}_h,\boldsymbol{v}_h) = A_h(\boldsymbol{u}_h,\boldsymbol{v}_h)+ C(\boldsymbol{u}_h;\boldsymbol{u}_h,\boldsymbol{v}_h)
\end{equation}
and the space $\mathring{\pmb{\mathcal{V}}}_h$ is the solenoidal subset of $\pmb{\mathcal{V}}_h$.  

Writing the problem in this form, we see immediately that this formulation satisfies an important property, following from the $\mathbf{L}^2$ orthogonality of solenoidal and irrotational vector fields.
\begin{thm}[Pressure robustness]\label{thm:p-robust}
Let $\boldsymbol{f} \in \mathbf{L}^2(\Omega)$ be the source term of the semidiscrete problem ($\mathring{\mathcal{W}}_h$).  If it is modified to $\boldsymbol{f} \to \boldsymbol{f} + \nabla \Phi$, where $\Phi \in H^1(\Omega)$, then the solution $\boldsymbol{u}_h$ will remain unchanged.
\end{thm}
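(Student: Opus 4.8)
The plan is to exploit the solenoidal reformulation $(\mathring{\mathcal{W}}_h)$ already established in the excerpt, in which the pressure has been eliminated and the test functions $\boldsymbol{v}_h$ range over the divergence-free subspace $\mathring{\pmb{\mathcal{V}}}_h$. The forcing $\boldsymbol{f}$ enters this formulation only through the right-hand side $L_h$, and only via the term $L(\boldsymbol{v}_h) = (\boldsymbol{f}, \boldsymbol{v}_h)$; neither the time-derivative term nor $\mathscr{A}_\text{stb}$ depends on $\boldsymbol{f}$. Thus the entire effect of the replacement $\boldsymbol{f} \to \boldsymbol{f} + \nabla\Phi$ is to augment $L_h(\boldsymbol{v}_h)$ by the single term $(\nabla\Phi, \boldsymbol{v}_h)$. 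The theorem follows immediately once I show that this term vanishes for every $\boldsymbol{v}_h \in \mathring{\pmb{\mathcal{V}}}_h$.

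To establish this, I would integrate by parts, writing
\begin{equation*}
(\nabla\Phi, \boldsymbol{v}_h) = -(\Phi, \nabla\cdot\boldsymbol{v}_h) + (\Phi, \boldsymbol{v}_h\cdot\boldsymbol{n})_{\partial\Omega}\text{ .}
\end{equation*}
The volume integral vanishes because every $\boldsymbol{v}_h \in \mathring{\pmb{\mathcal{V}}}_h$ is pointwise divergence-free: this is exactly the strong mass conservation guaranteed by the commuting-diagram structure of the divergence-conforming spaces. The boundary integral vanishes because $\boldsymbol{v}_h \in \pmb{\mathcal{V}}_h \subset \mathbf{H}^1_{\boldsymbol{n}}(\Omega)$ has vanishing normal trace, $\boldsymbol{v}_h\cdot\boldsymbol{n} = 0$ on $\partial\Omega$. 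Together these are precisely the $\mathbf{L}^2$ orthogonality of the solenoidal field $\boldsymbol{v}_h$ and the irrotational field $\nabla\Phi$ alluded to in the text.

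With $(\nabla\Phi, \boldsymbol{v}_h) = 0$ for all test functions, the modified right-hand side coincides with the original $L_h$, so the semidiscrete problem $(\mathring{\mathcal{W}}_h)$ is literally unchanged and its solution $\boldsymbol{u}_h$ is therefore unchanged as well. I do not anticipate any genuine obstacle here, as the result is essentially immediate once the pressure-free reformulation is available. The only points needing care are the two ingredients of the integration by parts—the exact pointwise solenoidality and the zero normal trace—both of which are \emph{structural} consequences of the divergence-conforming construction rather than approximation properties; this is what makes pressure robustness hold exactly rather than merely asymptotically.
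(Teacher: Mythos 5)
Your proposal is correct and follows essentially the same route as the paper, which presents the theorem as an immediate consequence of the pressure-free reformulation $(\mathring{\mathcal{W}}_h)$ together with the $\mathbf{L}^2$ orthogonality of solenoidal and irrotational fields. Your integration by parts, using the pointwise divergence-freeness of $\mathring{\pmb{\mathcal{V}}}_h$ and the vanishing normal trace from $\pmb{\mathcal{V}}_h\subset\mathbf{H}^1_{\boldsymbol{n}}(\Omega)$, simply spells out that orthogonality explicitly.
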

\noindent The main practical ramification of this is that errors in approximating steep pressure gradients will not pollute the velocity solution.  This is of significance in, e.g., immersed boundary methods, where immersed boundaries are represented as large irrotational source terms \cite{peskin1993improved}.  Related issues have also been observed in thermal convection \cite{galvin2012stabilizing}.

Another obvious property of the semidiscrete problem is that, if the exact velocity solution $\boldsymbol{u}$ is sufficiently regular for the jump terms in $J_h$ to be zero and the Nitsche terms to be well-defined, then it will clearly satisfy the semidiscrete problem exactly.
\begin{lemma}[Consistency]\label{lem:consist}
Suppose $\boldsymbol{u}\in \mathbf{H}^{\alpha'+3/2}(\Omega)$.  Then
\begin{equation}
    \left(\frac{\displaystyle \partial \boldsymbol{u}} {\displaystyle \partial t}, \boldsymbol{v}_h\right) + \mathscr{A}_\text{stb}(\boldsymbol{u},\boldsymbol{v}_h) = L_h(\boldsymbol{v}_h)\quad\forall\boldsymbol{v}_h\in\mathring{\pmb{\mathcal{V}}}_h\text{ .}
\end{equation}
\end{lemma}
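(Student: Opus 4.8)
The plan is to start from the strong form of the momentum equation, pair it with an arbitrary solenoidal test function $\boldsymbol{v}_h \in \mathring{\pmb{\mathcal{V}}}_h$, integrate by parts term-by-term, and then match the resulting identity against the definitions of $\mathscr{A}_\text{stb}$ and $L_h$. Because the analysis assumes homogeneous Dirichlet data, I have $\boldsymbol{u}_D = \boldsymbol{0}$ and $\boldsymbol{u} = \boldsymbol{0}$ on $\partial\Omega$, which I will exploit repeatedly. First I would pair the strong momentum residual with $\boldsymbol{v}_h$ to obtain $(\partial_t\boldsymbol{u}, \boldsymbol{v}_h) + (\nabla\cdot(\boldsymbol{u}\otimes\boldsymbol{u}), \boldsymbol{v}_h) + (\nabla p, \boldsymbol{v}_h) - (2\nu\nabla\cdot(\nabla^s\boldsymbol{u}), \boldsymbol{v}_h) = (\boldsymbol{f}, \boldsymbol{v}_h)$, which is legitimate since the assumed regularity renders each term a well-defined $L^2$ pairing.

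Next I would integrate by parts in the three spatial terms. The advection term produces $C(\boldsymbol{u};\boldsymbol{u},\boldsymbol{v}_h) = -(\boldsymbol{u}\otimes\boldsymbol{u},\nabla\boldsymbol{v}_h)$ plus a boundary contribution $((\boldsymbol{u}\cdot\boldsymbol{n})\boldsymbol{u},\boldsymbol{v}_h)_{\partial\Omega}$ that vanishes since $\boldsymbol{u} = \boldsymbol{0}$ on $\partial\Omega$. The viscous term produces $A(\boldsymbol{u},\boldsymbol{v}_h) = (2\nu\nabla^s\boldsymbol{u},\nabla^s\boldsymbol{v}_h)$ — using the symmetry of $\nabla^s\boldsymbol{u}$ to replace $\nabla\boldsymbol{v}_h$ by its symmetric part — together with the consistency boundary term $-(2\nu\boldsymbol{n}\cdot\nabla^s\boldsymbol{u},\boldsymbol{v}_h)_{\partial\Omega}$. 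The pressure term is the decisive one: integration by parts gives $-(p,\nabla\cdot\boldsymbol{v}_h) + (p,\boldsymbol{v}_h\cdot\boldsymbol{n})_{\partial\Omega}$, and both pieces vanish because $\boldsymbol{v}_h$ is solenoidal and $\boldsymbol{v}_h\cdot\boldsymbol{n} = 0$ on $\partial\Omega$ (as $\boldsymbol{v}_h\in\mathbf{H}^1_{\boldsymbol{n}}$). This is precisely the mechanism underlying Theorem \ref{thm:p-robust}, and it guarantees the pressure never enters the consistency identity.

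I would then compare the assembled left-hand side with $\mathscr{A}_\text{stb}(\boldsymbol{u},\boldsymbol{v}_h) = A_h(\boldsymbol{u},\boldsymbol{v}_h) + C(\boldsymbol{u};\boldsymbol{u},\boldsymbol{v}_h)$. Among the terms in $A_h$ beyond $A$, the Nitsche symmetrization term $(2\nu\boldsymbol{n}\cdot\nabla^s\boldsymbol{v}_h,\boldsymbol{u})_{\partial\Omega}$ and the penalty term $(2\nu\frac{C_\text{Nit}}{h}\boldsymbol{u},\boldsymbol{v}_h)_{\partial\Omega}$ both vanish because $\boldsymbol{u} = \boldsymbol{0}$ on $\partial\Omega$, while the consistency boundary term $-(2\nu\boldsymbol{n}\cdot\nabla^s\boldsymbol{u},\boldsymbol{v}_h)_{\partial\Omega}$ exactly reproduces the one generated by the viscous integration by parts. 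Likewise $L_h(\boldsymbol{v}_h) = L(\boldsymbol{v}_h) = (\boldsymbol{f},\boldsymbol{v}_h)$ once the $\boldsymbol{u}_D$ terms drop. What remains is to show that the skeleton stabilization contributes nothing, i.e., $J_h(\boldsymbol{u},\boldsymbol{v}_h) = 0$.

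The main obstacle is this last step, which is where the hypothesis $\boldsymbol{u}\in\mathbf{H}^{\alpha'+3/2}(\Omega)$ does all the work. The form $J_h$ penalizes jumps of $\partial^{\alpha'+1}_{\boldsymbol{n}}\boldsymbol{u}$ across interior facets, so it suffices to argue that $[\![\partial^{\alpha'+1}_{\boldsymbol{n}}\boldsymbol{u}]\!] = 0$ on every $e\in\mathcal{E}_0$. The key point, which I would make carefully rather than via a naive pointwise continuity argument, is that the index $\alpha'+3/2$ is the sharp Sobolev threshold for the order-$(\alpha'+1)$ normal derivative: a function exhibiting a genuine jump in its $(\alpha'+1)$-th normal derivative across an interface lies in $\mathbf{H}^{\alpha'+3/2-\epsilon}$ for every $\epsilon>0$ but not in $\mathbf{H}^{\alpha'+3/2}$, because such a jump would force a surface-supported singular part in the distributional derivative of order $\alpha'+2$. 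Hence global membership in $\mathbf{H}^{\alpha'+3/2}(\Omega)$ precludes any such jump and forces the one-sided traces to coincide, giving $J_h(\boldsymbol{u},\boldsymbol{v}_h) = 0$. With this established, the assembled identity coincides term-for-term with $(\partial_t\boldsymbol{u},\boldsymbol{v}_h) + \mathscr{A}_\text{stb}(\boldsymbol{u},\boldsymbol{v}_h) = L_h(\boldsymbol{v}_h)$ for all $\boldsymbol{v}_h\in\mathring{\pmb{\mathcal{V}}}_h$, completing the proof.
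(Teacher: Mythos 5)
Your proposal is correct and follows essentially the same route as the paper, which justifies this lemma only with the one-line observation that a sufficiently regular $\boldsymbol{u}$ makes the jump terms in $J_h$ vanish and the Nitsche terms well-defined, so that the remaining terms reduce to the standard weak form tested against solenoidal, normal-trace-free $\boldsymbol{v}_h$ (killing the pressure exactly as in Theorem \ref{thm:p-robust}). Your elaboration of why $\mathbf{H}^{\alpha'+3/2}(\Omega)$ is precisely the threshold that rules out a jump in the $(\alpha'+1)$-th normal derivative is a correct and somewhat more careful filling-in of the detail the paper leaves implicit.
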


Next, we observe that $\mathscr{A}_\text{stb}$ satisfies a nonlinear coercivity property in the mesh-dependent norm
\begin{equation}
    |||\cdot|||^2   \coloneqq 2\nu (\nabla^s \cdot,\nabla^s \cdot) + 2\nu\frac{\displaystyle C_{\normalfont{\text{Nit}}}}{\displaystyle h} ||\cdot||^2_{\mathbf{L}^2(\mathcal{E}_{\partial})} +  || \eta^{1/2} [\![\partial^{\alpha' + 1}_{\boldsymbol{n}} (\cdot)]\!] ||^2_{\mathbf{L}^2(\mathcal{E}_0)} 
\end{equation}
defined on the discrete velocity space $\pmb{\mathcal{V}}_h$ if the Nitsche parameter is sufficiently large.
\begin{lemma}[Coercivity]\label{lem:coercivity} Provided that $C_{\normalfont{\text{Nit}}}$ is chosen sufficiently large, there exists a constant $C_\textup{coer} > 0$ such that
\begin{equation}
\mathscr{A}_{\textup{stb}} (\boldsymbol{u}_h,\boldsymbol{u}_h) \geq C_{\textup{coer}}|||\boldsymbol{u}_h|||^2\text{ .}
\end{equation}
\end{lemma}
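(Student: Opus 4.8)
The plan is to test the stabilized form against $\boldsymbol{u}_h$ itself, recognize that almost every contribution reproduces a term in the triple-bar norm, and then dispose of the single term that does not. First I would treat the convective trilinear form. For $\boldsymbol{u}_h \in \mathring{\pmb{\mathcal{V}}}_h$ we have $C(\boldsymbol{u}_h;\boldsymbol{u}_h,\boldsymbol{u}_h) = -(\boldsymbol{u}_h\otimes\boldsymbol{u}_h,\nabla\boldsymbol{u}_h)$, and integration by parts rewrites this as $\tfrac{1}{2}(\nabla\cdot\boldsymbol{u}_h,|\boldsymbol{u}_h|^2) - \tfrac{1}{2}(\boldsymbol{u}_h\cdot\boldsymbol{n},|\boldsymbol{u}_h|^2)_{\partial\Omega}$. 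Since the discretization is divergence-conforming, $\nabla\cdot\boldsymbol{u}_h = 0$ pointwise, and since $\boldsymbol{u}_h\cdot\boldsymbol{n} = 0$ on $\partial\Omega$ (the normal component is enforced strongly in $\mathring{\pmb{\mathcal{V}}}_h$), both terms vanish and $C(\boldsymbol{u}_h;\boldsymbol{u}_h,\boldsymbol{u}_h) = 0$. This step uses exactly the two structural properties that motivate divergence-conforming splines in the first place.

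Next I would expand $A_h(\boldsymbol{u}_h,\boldsymbol{u}_h)$. The bulk viscous term contributes $2\nu(\nabla^s\boldsymbol{u}_h,\nabla^s\boldsymbol{u}_h)$, the Nitsche penalty contributes $2\nu\tfrac{C_{\text{Nit}}}{h}\|\boldsymbol{u}_h\|^2_{\mathbf{L}^2(\mathcal{E}_{\partial})}$, and the skeleton term is $J_h(\boldsymbol{u}_h,\boldsymbol{u}_h) = \|\eta^{1/2}[\![\partial^{\alpha'+1}_{\boldsymbol{n}}\boldsymbol{u}_h]\!]\|^2_{\mathbf{L}^2(\mathcal{E}_0)}$; these are precisely the three terms of $|||\boldsymbol{u}_h|||^2$. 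The two symmetric Nitsche consistency terms coincide when $\boldsymbol{u}=\boldsymbol{v}=\boldsymbol{u}_h$ and combine into $-4\nu(\boldsymbol{n}\cdot\nabla^s\boldsymbol{u}_h,\boldsymbol{u}_h)_{\partial\Omega}$, so that $\mathscr{A}_{\text{stb}}(\boldsymbol{u}_h,\boldsymbol{u}_h) = |||\boldsymbol{u}_h|||^2 - 4\nu(\boldsymbol{n}\cdot\nabla^s\boldsymbol{u}_h,\boldsymbol{u}_h)_{\partial\Omega}$. The entire difficulty is therefore concentrated in bounding this boundary consistency term.

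To control it I would invoke a discrete inverse trace inequality: because $\nabla^s\boldsymbol{u}_h$ is piecewise polynomial and the mesh is quasi-uniform, $\|\boldsymbol{n}\cdot\nabla^s\boldsymbol{u}_h\|^2_{\mathbf{L}^2(\mathcal{E}_{\partial})} \leq \tfrac{C_{\text{inv}}}{h}\|\nabla^s\boldsymbol{u}_h\|^2$ with $C_{\text{inv}}$ independent of $h$ (but depending on $k'$). Combining this with Young's inequality with a free parameter $\delta$ bounds the cross term by $2\nu\delta C_{\text{inv}}\|\nabla^s\boldsymbol{u}_h\|^2 + \tfrac{2\nu}{\delta h}\|\boldsymbol{u}_h\|^2_{\mathbf{L}^2(\mathcal{E}_{\partial})}$. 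Substituting back yields $\mathscr{A}_{\text{stb}}(\boldsymbol{u}_h,\boldsymbol{u}_h) \geq 2\nu(1-\delta C_{\text{inv}})\|\nabla^s\boldsymbol{u}_h\|^2 + \tfrac{2\nu}{h}(C_{\text{Nit}}-\delta^{-1})\|\boldsymbol{u}_h\|^2_{\mathbf{L}^2(\mathcal{E}_{\partial})} + J_h(\boldsymbol{u}_h,\boldsymbol{u}_h)$. Choosing $\delta = 1/(2C_{\text{inv}})$ makes the first coefficient equal to $\nu$, and then requiring $C_{\text{Nit}} > 2C_{\text{inv}}$ keeps the boundary coefficient strictly positive; the claim follows with $C_{\text{coer}} = \min\{1/2,\,(C_{\text{Nit}}-2C_{\text{inv}})/C_{\text{Nit}},\,1\}$, which quantifies what "sufficiently large" means.

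The main obstacle is the boundary consistency term, and more precisely pinning down the inverse trace constant $C_{\text{inv}}$ with the correct $h^{-1}$ scaling uniformly over the mesh: this is where quasi-uniformity and the polynomial structure of the discrete space are essential, and it is what dictates the threshold on $C_{\text{Nit}}$. By contrast, the skeleton stabilization form $J_h$ requires no work here at all, as it is carried verbatim from $A_h$ into the norm, so coercivity is entirely insensitive to the choice of $\eta$.
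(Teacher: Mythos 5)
Your proposal is correct and follows essentially the same route as the paper's proof: vanish the convective term (you derive $C(\boldsymbol{u}_h;\boldsymbol{u}_h,\boldsymbol{u}_h)=0$ by integration by parts using $\nabla\cdot\boldsymbol{u}_h=0$ and $\boldsymbol{u}_h\cdot\boldsymbol{n}=0$, whereas the paper simply invokes skew-symmetry, which rests on the same two facts), identify the three norm contributions, and absorb the Nitsche consistency term via Young's inequality combined with a discrete trace inequality, which yields the same threshold $C_{\text{Nit}}\gtrsim 4C_{\text{tr}}$ and $C_{\text{coer}}=1/2$. No gaps.
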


\begin{proof}
The trilinear form $C$ is skew-symmetric in its second and third arguments, implying that $C(\boldsymbol{u}_h; \boldsymbol{u}_h, \boldsymbol{u}_h) = 0$.  Thus we have
\begin{align}
\begin{split}
\mathscr{A}_{\text{stb}} (\boldsymbol{u}_h,\boldsymbol{u}_h)
& = 2\nu (\nabla^s \boldsymbol{u}_h,\nabla^s \boldsymbol{u}_h) + 2\nu \frac{\displaystyle C_{\textrm{Nit}}}{\displaystyle h}||\boldsymbol{u}_h||^2_{\mathbf{L}^2(\mathcal{E}_{\partial})} - 4\nu(\boldsymbol{n} \cdot \nabla^s \boldsymbol{u}_h, \boldsymbol{u}_h)_{\Gamma_h} + || \eta^{1/2} [\![\partial^{\alpha' + 1}_{\boldsymbol{n}} \boldsymbol{u}_h]\!] ||^2_{\mathbf{L}^2(\mathcal{E}_0)}\\
& \geq 2\nu (\nabla^s \boldsymbol{u}_h,\nabla^s \boldsymbol{u}_h) +2\nu( \frac{\displaystyle C_{\textrm{Nit}}}{\displaystyle h} - \lambda) ||\boldsymbol{u}_h||^2_{\mathbf{L}^2(\mathcal{E}_{\partial})} - \displaystyle {\frac{2\nu} {\lambda}} ||\boldsymbol{n} \cdot \nabla^s \boldsymbol{u}_h||^2_{\mathbf{L}^2(\mathcal{E}_{\partial})} + || \eta^{1/2} [\![\partial^{\alpha + 1}_{\boldsymbol{n}} \boldsymbol{u}_h]\!] ||^2_{\mathbf{L}^2(\mathcal{E}_0)}\text{ ,}
\end{split}
\end{align}
where $\lambda > 0$ is an arbitrary constant associated with applying Young's inequality to the Nitsche consistency and symmetry terms.  Let $C_{\text{tr}} > 0$ be sufficiently large such that the trace inequality
\begin{equation}
||\boldsymbol{n} \cdot \nabla^s \boldsymbol{u}_h||^2_{\mathbf{L}^2(\mathcal{E}_{\partial})} \leq \frac{C_{\text{tr}}}{h} (\nabla^s \boldsymbol{u}_h,\nabla^s \boldsymbol{u}_h)
\end{equation}
holds \cite{Evans2013}.  Then, if $\lambda$ is chosen to be equal to $\frac{2C_{\text{tr}}}{h}$, the desired result immediately follows with $C_\text{coer} = \frac{1}{2}$ if $C_{\text{Nit}} \geq 4C_{\text{tr}}$.
\end{proof}

In a steady, linearized model problem, where time derivatives are zero and the first argument to the convection form $C$ is replaced by a given solenoidal vector field, Lemma \ref{lem:consist} would yield the error orthogonality property that $\mathscr{A}_\text{stb}(\boldsymbol{u}_h-\boldsymbol{u},\boldsymbol{v}_h) = 0$, which can be combined with Lemma \ref{lem:coercivity} to derive error estimates.  Analysis of this type can be found in literature on related methods, e.g., \cite{ahmed2021pressure,EV12,EV13,VJ17}.  In particular, these authors derive bounds on error that are robust with respect to Reynolds number.  However, the present contribution is focused on the nonlinear Navier--Stokes problem.  In the nonlinear setting, the coercivity property formalized in Lemma \ref{lem:coercivity} leads straightforwardly to energetic stability of the semidiscrete solution.
\begin{thm}[Energetic stability]  If $\boldsymbol{u}_h$ satisfies ($\mathring{\mathcal{W}}_h$) and $C_{\normalfont{\text{Nit}}}$ is sufficiently large,
\begin{equation}
\frac{\partial}{\partial t} \frac{1}{2} ||\boldsymbol{u}_h||^2_{\mathbf{L}^2(\mathcal{T}_h)} = - \mathscr{A}_{\text{stb}} (\boldsymbol{u}_h,\boldsymbol{u}_h) + (\boldsymbol{f}, \boldsymbol{u}_h) \leq (\boldsymbol{f}, \boldsymbol{u}_h)\text{ .}
\end{equation}
\end{thm}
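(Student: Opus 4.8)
The plan is to run the standard energy argument, testing the reduced semidiscrete problem $(\mathring{\mathcal{W}}_h)$ against the solution itself. Since $\boldsymbol{u}_h \in \mathring{\pmb{\mathcal{V}}}_h$, the choice $\boldsymbol{v}_h = \boldsymbol{u}_h$ is an admissible test function, which yields the identity $\left(\partial_t \boldsymbol{u}_h, \boldsymbol{u}_h\right) + \mathscr{A}_\text{stb}(\boldsymbol{u}_h, \boldsymbol{u}_h) = L_h(\boldsymbol{u}_h)$. First I would rewrite the time-derivative term: because each component of $\boldsymbol{u}_h$ is smooth in time, the product rule gives $\left(\partial_t \boldsymbol{u}_h, \boldsymbol{u}_h\right) = \tfrac12 \partial_t \|\boldsymbol{u}_h\|^2$, and since $\boldsymbol{u}_h$ is $C^{\alpha'}$-continuous across interior facets the broken norm $\|\cdot\|_{\mathbf{L}^2(\mathcal{T}_h)}$ coincides with the global $\mathbf{L}^2(\Omega)$ norm, so this term is exactly $\tfrac{\partial}{\partial t}\tfrac12 \|\boldsymbol{u}_h\|^2_{\mathbf{L}^2(\mathcal{T}_h)}$.

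Next I would simplify the right-hand side. Under the homogeneous Dirichlet hypothesis adopted at the start of Section \ref{sec:proof}, the boundary data $\boldsymbol{u}_D$ vanishes, so both Nitsche load contributions in $L_h$ drop out and $L_h(\boldsymbol{u}_h) = L(\boldsymbol{u}_h) = (\boldsymbol{f}, \boldsymbol{u}_h)$. Rearranging the tested identity then produces the asserted equality $\tfrac{\partial}{\partial t}\tfrac12 \|\boldsymbol{u}_h\|^2_{\mathbf{L}^2(\mathcal{T}_h)} = -\mathscr{A}_\text{stb}(\boldsymbol{u}_h, \boldsymbol{u}_h) + (\boldsymbol{f}, \boldsymbol{u}_h)$. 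Finally, the inequality follows from coercivity: by Lemma \ref{lem:coercivity}, provided $C_{\text{Nit}}$ is sufficiently large, $\mathscr{A}_\text{stb}(\boldsymbol{u}_h, \boldsymbol{u}_h) \geq C_{\text{coer}} |||\boldsymbol{u}_h|||^2 \geq 0$, whence $-\mathscr{A}_\text{stb}(\boldsymbol{u}_h, \boldsymbol{u}_h) \leq 0$ and the right-hand side is bounded above by $(\boldsymbol{f}, \boldsymbol{u}_h)$, as claimed.

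As for the main obstacle, there is genuinely no hard computational step remaining, since the substantive work has already been done in establishing Lemma \ref{lem:coercivity}: the vanishing of the trilinear convective term $C(\boldsymbol{u}_h; \boldsymbol{u}_h, \boldsymbol{u}_h) = 0$ via skew-symmetry, and the absorption of the Nitsche consistency and symmetry terms through a trace inequality combined with Young's inequality. Given that lemma, the theorem is an immediate corollary. The only points that warrant any care are the two bookkeeping observations above, namely that the broken $\mathbf{L}^2$ norm agrees with the global one by continuity of $\boldsymbol{u}_h$, and that the Nitsche load terms vanish for homogeneous boundary data so that the forcing reduces cleanly to $(\boldsymbol{f}, \boldsymbol{u}_h)$.
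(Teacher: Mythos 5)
Your argument is correct and matches the route the paper intends: the paper gives no explicit proof, remarking only that energetic stability follows ``straightforwardly'' from Lemma \ref{lem:coercivity} after testing $(\mathring{\mathcal{W}}_h)$ with $\boldsymbol{v}_h = \boldsymbol{u}_h$, which is precisely what you do. Your additional bookkeeping observations (the broken norm coinciding with the global $\mathbf{L}^2$ norm and the Nitsche load terms vanishing for homogeneous data) are accurate and only make the implicit argument explicit.
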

\noindent Note, in particular, that, in the absence of external forcing or in the presence of conservative forcing, this implies that kinetic energy of the discrete solution will decay over time.  The same property holds for ordinary Galerkin approximations as well, but we shall demonstrate numerically, in Section \ref{sec:numerical}, that the extra numerical dissipation due to $J_h$ in fact provides robustness in high Reynolds number flows, as one might expect by analogy to the error analysis of similar methods applied to linear model problems.  As a final remark, the skeleton-stabilized methodology presented herein also admits global balance laws for linear momentum on rectilinear domains and axial angular momentum on cylindrical domains.  In particular, the global balance laws presented for ordinary Galerkin approximations in Section 7 of \cite{EV13} trivially extend to the method proposed in this paper.

\section{Numerical examples}\label{sec:numerical}
We now proceed to study the properties of the method empirically, using numerical experiments.  Our numerical experiments use the library tIGAr \cite{KA19}, which extends the FEniCS \cite{LO12} software for finite element automation to IGA.  FEniCS enables users to specify variational problems in a high-level domain-specific language called Unified Form Language (UFL) \cite{Alnaes2014}.  UFL problem descriptions are then compiled \cite{Kirby2006} into efficient numerical routines.  For unsteady problems, the fully-discrete formulation is obtained from the semidiscrete problem ($\mathcal{W}_h$) by using the implicit generalized-$\alpha$ \cite{Hul93} finite difference method to discretize $\partial/\partial t$.  The saddle point problem at each time step is solved using the iterated penalty scheme of \cite{MS18}.  The formulation for steady problems is a straightforward simplification of ($\mathcal{W}_h$).  Because we use Nitsche's method to enforce tangential Dirichlet boundary conditions on the velocity, there is no need for refinement around domain boundaries (cf. \cite{BA0702}), and uniform knot spacing is used to define the div-conforming B-spline discretizations for all examples in this paper.  Maximal continuity splines are used as well, so $\alpha' = k - 1$.

\label{NE}
\subsection{Steady manufactured solution}
\label{mmf}
We begin our numerical experiments with some basic tests using a problem with the known exact solution
\begin{align}
\begin{split}
\begin{cases}
u_1(\boldsymbol{x}) &= 2e^x_1(x_1-1)^2x_1^2(x_2^2-x_2)(2x_2-1)\\
u_2(\boldsymbol{x}) &= -e^x_1(x_1-1)x_1(x_1^2+3x_1-2)(x_2-1)^2x_2^2\\
p(\boldsymbol{x}) &= (-424 + 156e + (x_2^2-x_2)(-456+e^x_1(456 + x_1^2(228-5(x_2^2-y)) \\
& + 2x_1(-228 + (x_2^2-x_2)) + 2x_1^3(-36 + (x_2^2-x_2)) + x_1^4(12+ x_2^2-x_2))))
\end{cases}
\end{split}
\label{p(x)}
\end{align}
on the unit square $\Omega = [0,1]\times[0,1]$, as used previously in \cite{BA11,EV12}.  This solution can be manufactured by deriving a corresponding source term $\boldsymbol{f}$ that is consistent with the strong form of the Navier--Stokes equations.  (We do this automatically, using computer algebra in FEniCS UFL.)

\subsubsection{Convergence}

We first test the method's convergence with respect to $h$-refinement for different polynomial degrees, at a fixed Reynolds number of 10.  The results are collected in Table \ref{tab:conv}.  These results are consistent with optimal convergence rates in both $\mathbf{L}^2$ and $\mathbf{H}^1$, for all polynomial degrees tested.  The high-order convergence for $k' > 1$ is attributable to the consistency property of Lemma \ref{lem:consist}, whose hypotheses are satisfied by the smooth exact solution used here.
\begin{table}[b!]
\begin{center}
\begin{tabular}{SSSSSSSS} \toprule
 {} &  {$h$} & {$\displaystyle{\frac{1}{4}}$} & {$\displaystyle{\frac{1}{8}}$} & {$\displaystyle{\frac{1}{16}}$} & {$\displaystyle{\frac{1}{32}}$} & {$\displaystyle{\frac{1}{64}}$} & {$\displaystyle{\frac{1}{128}}$} \\ \midrule
  {} &  {$||\boldsymbol{u}-\boldsymbol{u}_h||_{\mathbf{L}^2}$}  & {4.110e\text{-}3} & {1.048e\text{-}3} & {2.629e\text{-}4}  & {6.579e\text{-}5} & {1.645e\text{-}5} & {4.113e\text{-}6}  \\
  {$k'=1$} &  {order}  & {$-$}  & 2.784 & 2.579  & 2.462 & 2.384 &  2.330   \\
  {$\gamma = 1\text{e-}2$} &  {$|\boldsymbol{u}-\boldsymbol{u}_h|_{\mathbf{H}^1}$}  & {5.546e\text{-}2}  & {2.788e\text{-}2} & {1.395e\text{-}2}  & {6.978e\text{-}3}  & {3.489e\text{-}3} & {1.745e\text{-}3}  \\
  {} &  {order}  & {$-$}  & 1.727 & 1.542  & 1.433 & 1.361  & 1.309       \\ \midrule

    {} &  {$||\boldsymbol{u}-\boldsymbol{u}_h||_{\mathbf{L}^2}$}  &  {3.873e\text{-}4} & {4.444e\text{-}5} & {5.396e\text{-}6} &  {6.691e\text{-}7} & {8.346e\text{-}8}  & {1.043e\text{-}8}  \\
  {$k'=2$} &  {order}  & {$-$}  & 3.837 & 3.661  & 3.538 & 3.451  & 3.387    \\
  {$\gamma = 1\text{e-}3$} &  {$|\boldsymbol{u}-\boldsymbol{u}_h|_{\mathbf{H}^1}$}  & {9.237e\text{-}3}  & {2.244e\text{-}3} & {5.556e\text{-}4}  & {1.385e\text{-}4} & {3.460e\text{-}5} & {8.649\text{e}-6}  \\
  {} &  {order}  & {$-$}  & 2.387 & 2.303  & 2.245 &  2.205  & 2.176       \\ \midrule
  
    {} &  {$||\boldsymbol{u}-\boldsymbol{u}_h||_{\mathbf{L}^2}$}  &   {3.281e\text{-}5} & {2.354e\text{-}6} & {1.586e\text{-}7} &  {1.027e\text{-}8} & {6.534e\text{-}10}  & {4.119e\text{-}11}  \\
  {$k'=3$} &  {order}  & {$-$}  & 5.001 & 4.699  & 4.537  &  4.439 & 4.372    \\
  {$\gamma = 1\text{e-}4$} &  {$|\boldsymbol{u}-\boldsymbol{u}_h|_{\mathbf{H}^1}$}  & {9.096e\text{-}4}  & {1.228e\text{-}4} & {1.619e\text{-}5}  & {2.085e\text{-}6} & {2.648e\text{-}7} & {3.336e\text{-}8}   \\
  {} &  {order}  & {$-$} & 3.437 & 3.299  & 3.223 & 3.178  &  3.149      \\ \midrule
      \\ \bottomrule
\end{tabular}
\end{center}
 \caption{Convergence of results to a smooth manufactured solution of the steady Navier--Stokes problem.}\label{tab:conv}
 \end{table}

\subsubsection{Reynolds number robustness}
Our chosen manufactured solution is independent of Reynolds number, which makes it useful for comparing errors at different Reynolds numbers.  Ideally, the skeleton stabilization introduced by the form $J_h$ should render error insensitive to Reynolds number.  To verify this, we now consider fix the mesh size at $h=1/16$ and compute discrete solutions at Reynolds numbers of $10^i$ for $i\in\{0,1, 2,3\}$.  The results in Figure \ref{fig:reynolds-robust} show that, as hoped, $\mathbf{L}^2$ and $\mathbf{H}^1$ errors are essentially independent of Reynolds number, for $k' = 1,2,3$.
\begin{figure}[t!]
\centering
   	 \begin{subfigure}[b]{0.32\textwidth}
   	 	\centering
		\includegraphics[height=2in]{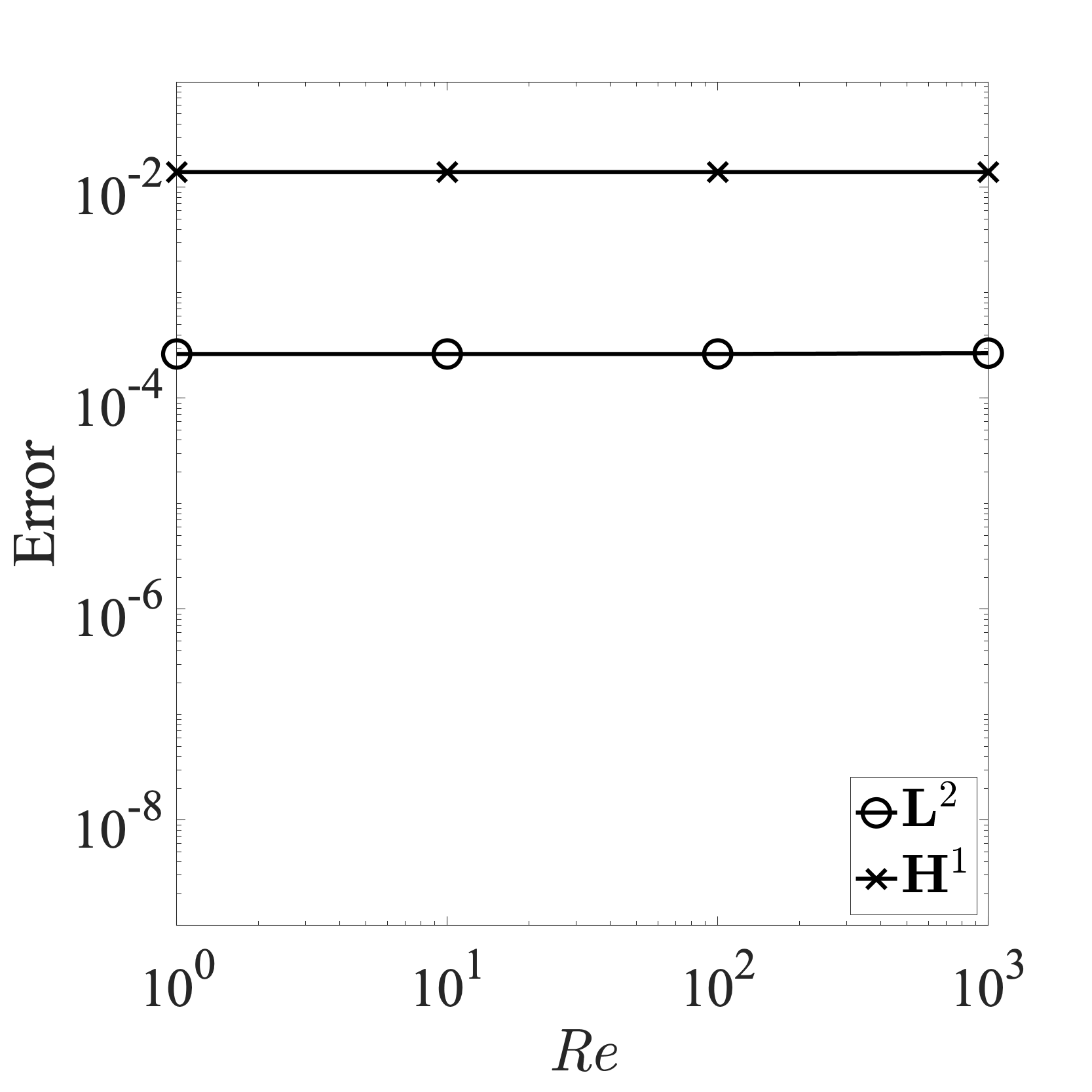}
		\caption{$k' = 1$}
	\end{subfigure}
	\begin{subfigure}[b]{0.32\textwidth}
   	 	\centering
		\includegraphics[height=2in]{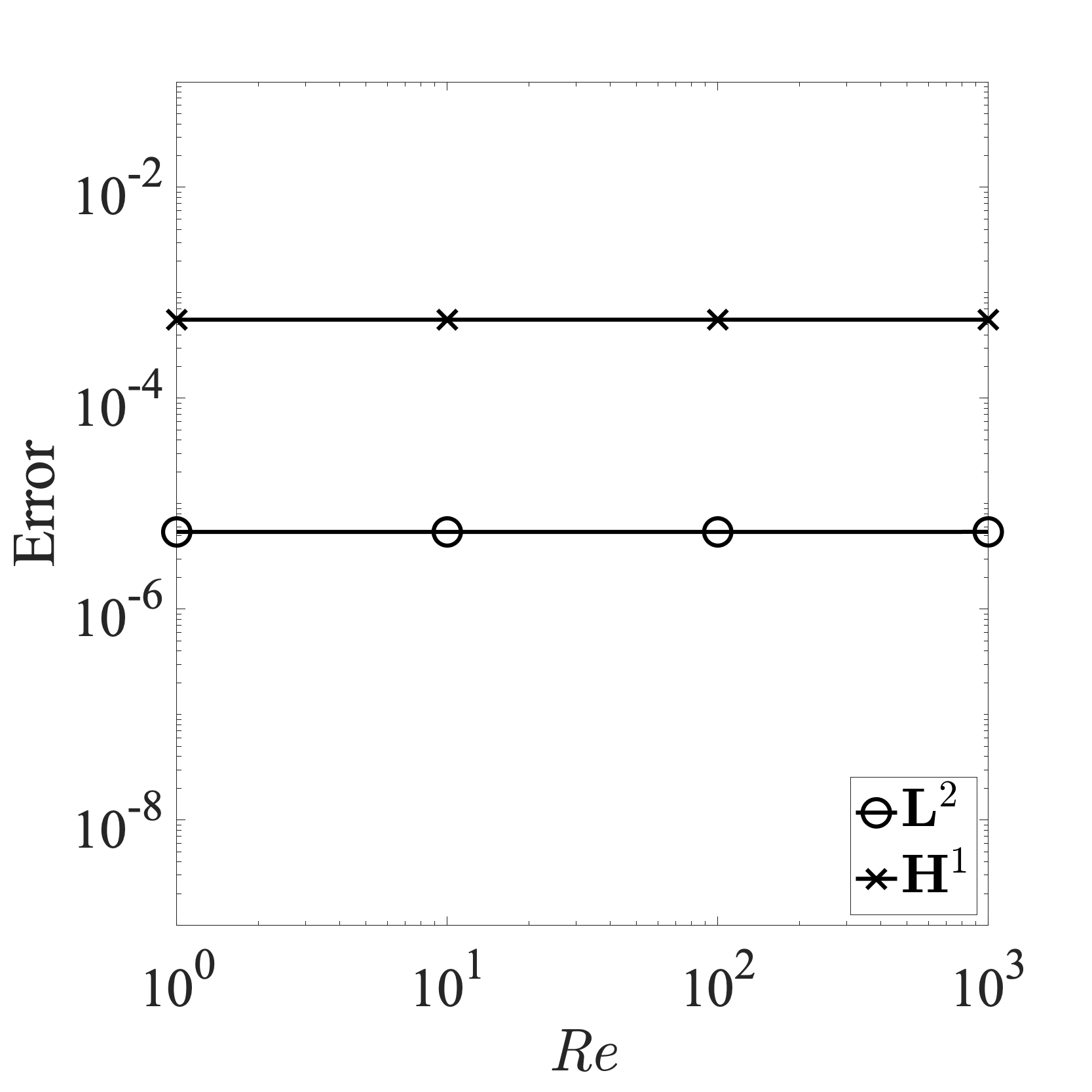}
		\caption{$k' = 2$}
	\end{subfigure}
	 \begin{subfigure}[b]{0.32\textwidth}
   	 	\centering
		\includegraphics[height=2in]{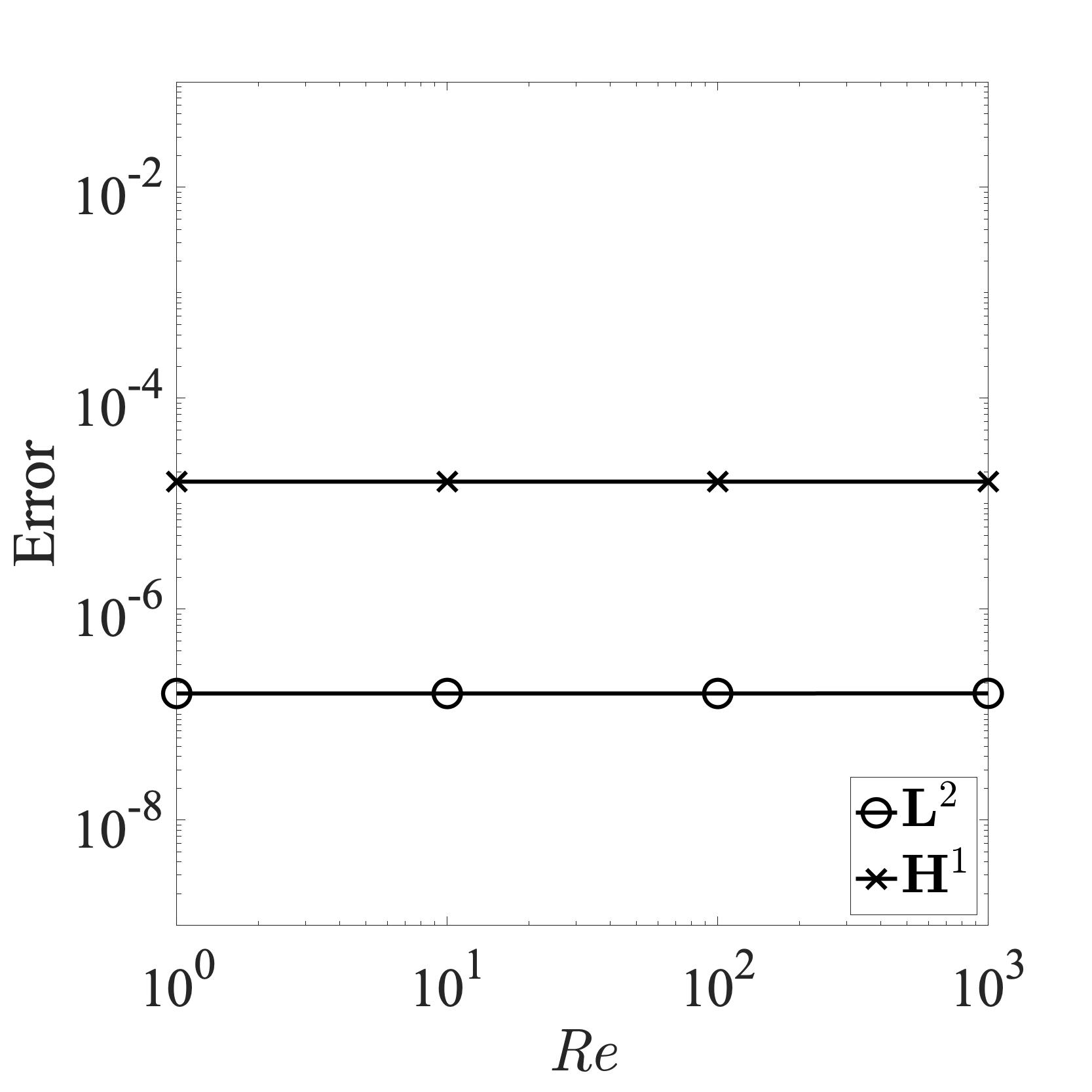}
		\caption{$k' = 3$}
	\end{subfigure}
	\caption{Numerical testing of Reynolds number robustness at $h=1/16$.}\label{fig:reynolds-robust}
\end{figure}

\subsubsection{Pressure robustness}
We now test Theorem \ref{thm:p-robust} numerically.  In particular, we modify the source term $\boldsymbol{f}$ used to manufacture the exact solution, by adding an irrotational component:
\begin{equation}
    \boldsymbol{f}\to \boldsymbol{f} + \nabla\Phi\text{ ,}
\end{equation}
where we choose $\Phi(\boldsymbol{x}) = \text{sin}(\pi x_1 x_2)$.  This modifies the pressure solution, $p$, but we are primarily interested in velocity error.  Table \ref{tab:p-robust} compares the velocity errors for the original and modified source terms, using a mesh with $h=1/16$, a Reynolds number of 10, and divergence-conforming B-spline spaces of degree $k'=1$.  The results show that discrepancies in velocity errors (highlighted in red) are small enough to be reasonably attributed to solver tolerance and accumulation of floating point round-off error.

\begin{table}[t!]
\begin{center}
\begin{tabular}{SSS} \toprule
{} &  {$\boldsymbol{f}$}  &  {$\boldsymbol{f} + \nabla \Phi$}  \\ \midrule 
{$||\boldsymbol{u}-\boldsymbol{u}_h||_{\mathbf{L}^2}$}  &  {0.000262925982{\color {red}6760857}} &{0.000262925982{\color {red} 7626544}}  \\ \midrule
{$|\boldsymbol{u}-\boldsymbol{u}_h|_{\mathbf{H}^1}$}  &  {0.0139525191245439{\color {red}72}} &{0.0139525191245439{\color {red}45}}  \\ \bottomrule
\end{tabular}
\end{center}
\caption{Verification of pressure robustness.}\label{tab:p-robust}
\end{table}

\subsection{Lid-driven cavity problem}
\label{lldd}
We now move on to a more complicated problem without a known closed-form solution, namely, the lid-driven cavity benchmark.  This problem serves as a test of how robust discretizations are in the presence of solution singularities, and has been studied extensively in the literature \cite{BU08,EV12,EV13,GHIA}.  The most common 2D variant of this problem is posed on a unit square, with Dirichlet boundary data $\boldsymbol{u} = [u_D,0]^T$ on the top and no-slip conditions on the remaining sides.  (Thus, the Dirichlet data is discontinuous at the upper corners, and cannot be the trace of an $\mathbf{H}^1$ solution, placing the problem technically outside the scope of the weak problem ($\mathcal{W}$) and the consistency result of Lemma \ref{lem:consist}, which requires even more regularity.)  We refer to the top as $\Gamma_{\textrm{lid}}$ and the union of other sides as $\Gamma_{\textrm{wall}}$.  We focus on the steady state of flow in this configuration, where the solution is characterized by a primary center vortex with a few secondary corner or near-wall vortices, known as Moffatt eddies \cite{mo64}. We compute approximate solutions at $\text{Re} = 7500$ and $\text{Re}=10000$ and compare our results with those of Ghia et al. \cite{GHIA}, considering both qualitative flow behavior (Section \ref{sec:ldc-qualitative}) and quantitative solution data (Section \ref{sec:ldc-quantitative}).

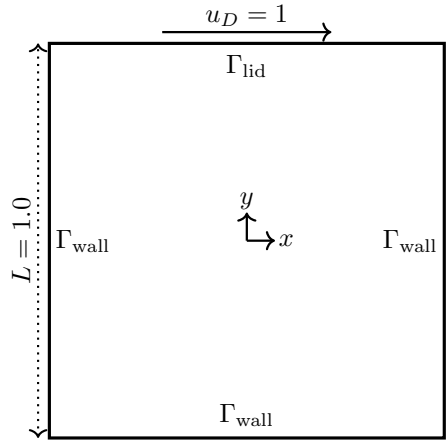
\begin{figure}[t!]
 \begin{center}
\begin{tikzpicture}[thick, scale=0.75]
\draw [->][line width=0.3mm, black] (3.5,3.5) -- (3.5,4) ;
\node at (3.5,4.2) {$y$};
\draw [->][line width=0.3mm, black] (3.5,3.5) -- (4,3.5) ;
\node at(4.2,3.5) {$x$};
\draw[black, very thick] (0,0) rectangle (7,7);
\draw [<->][line width=0.5mm, black, thick, dotted ] (-0.2,0) -- (-0.2,7) ;
\node [rotate=90] at (-.5,3.5) {$L = 1.0$};
\node at (3.5,7.5) {$u_D = 1$};
\draw [->][line width=0.3mm, black]  (2.0,7.2)--  (5.0,7.2) ;
\node [align=left] at (.6,3.5) {$\Gamma_{\text{wall}}$};
\node [align=left] at (6.4,3.5) {$\Gamma_{\text{wall}}$};
\node [align=left] at (3.5,0.4) {$\Gamma_{\text{wall}}$};
\node [align=left] at (3.5,6.6) {$\Gamma_{\text{lid}}$};
\end{tikzpicture}
\end{center}
  	\caption{Sketch of the lid-driven cavity problem.}
  		\label{lid}
\end{figure}
\subsubsection{Near-wall eddies}\label{sec:ldc-qualitative}
The variational multiscale (VMS) concept has clarified, over the past few decades, the connection between stabilization and modeling the effects of solution features that are too small to be resolved by finite meshes.  In particular, the connection between skeleton stabilization and VMS was discussed in \cite{Burman2007}.  Without any form of stabilization, unresolved fine-scale solution features can lead to spurious qualitative features at resolved scales.  In the lid-driven cavity problem, the near-wall Moffatt eddies are much smaller than the overall size of the domain, and serve as an effective test of how well stabilization schemes can model their influence on the coarse-scale solution. 
\begin{figure}[t!]
\centering
   	 \begin{subfigure}[b]{0.49\textwidth}
   	 	\centering
		\includegraphics[height=2.5in]{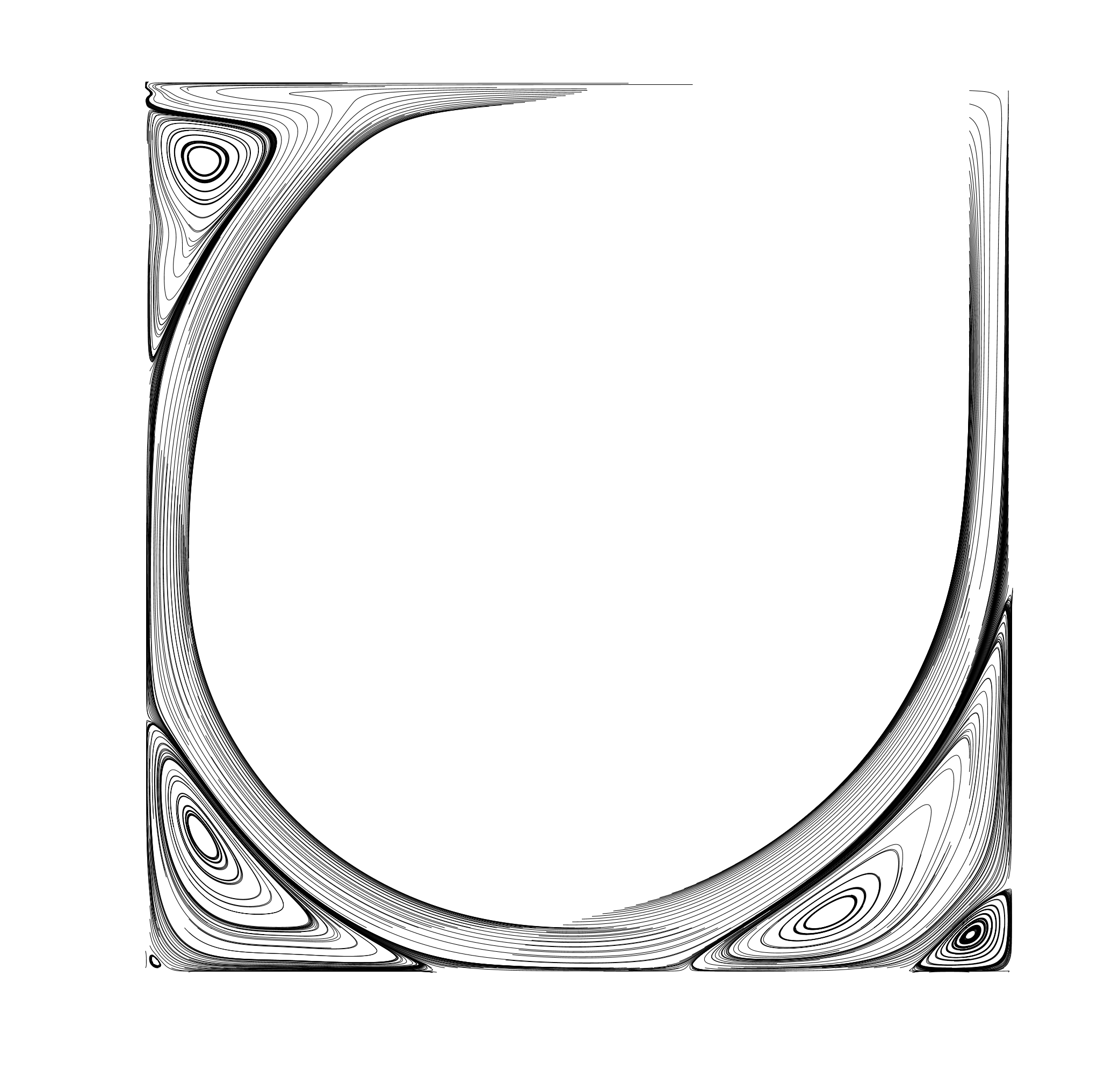}
		\caption{$\text{Re} = 7500$}
	\end{subfigure}
	 \begin{subfigure}[b]{0.49\textwidth}
   	 	\centering
		\includegraphics[height=2.5in]{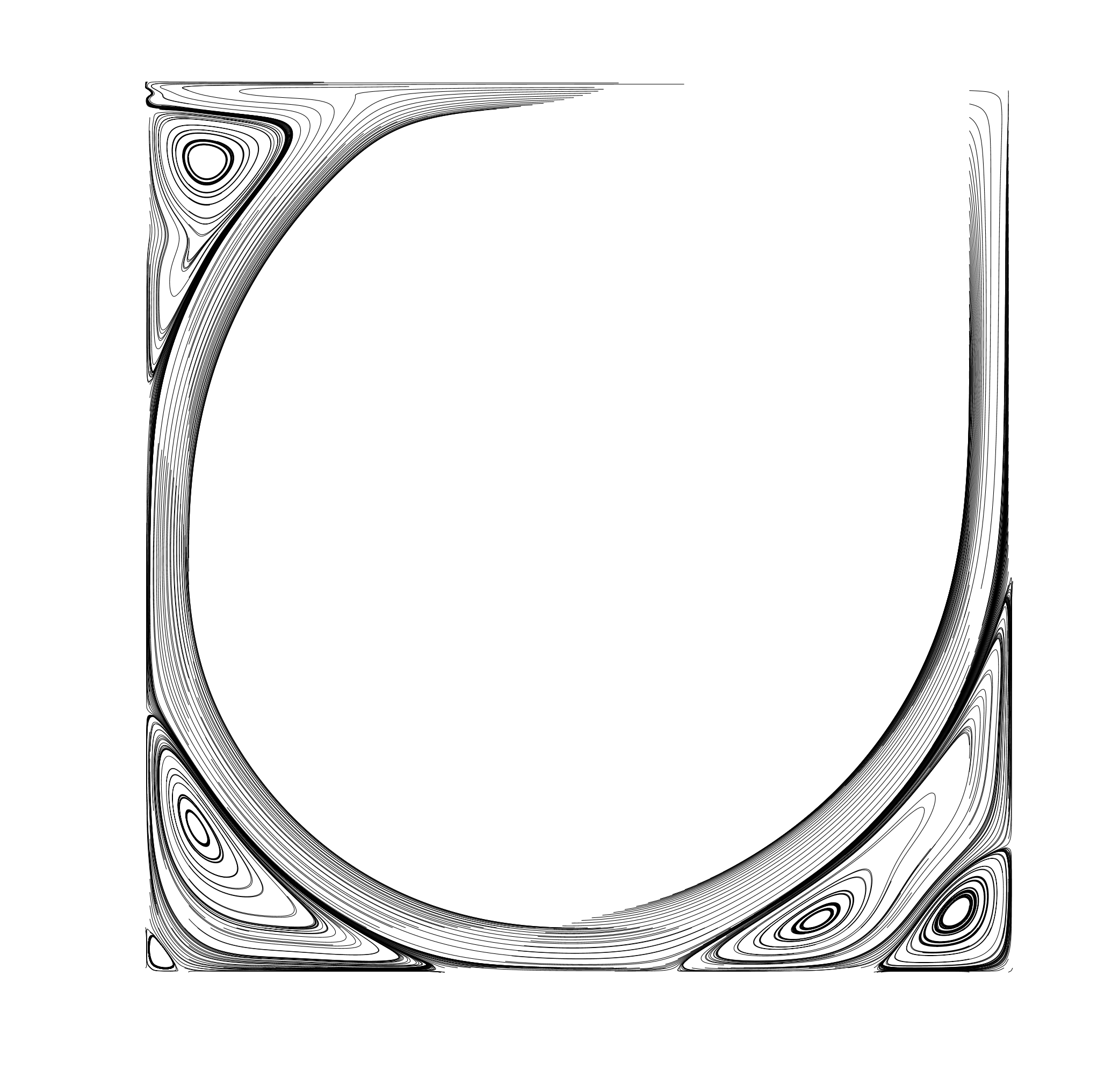}
		\caption{$\text{Re} = 10000$}
	\end{subfigure}
	\caption{Near-wall streamline contours of a well-resolved solution for the lid-driven cavity problem.}\label{fig:resolved-solutions-ldc}
\end{figure}
For reference, we first compute a well-resolved solution, to illustrate the qualitative behavior of the exact solution.  Solutions at both Reynolds numbers of interest are computed on a $128\times128$ element mesh, with a skeleton-stabilized divergence-conforming B-spline discretization of degree $k'= 3 $.  Figure \ref{fig:resolved-solutions-ldc} shows streamlines traced from locations near the bottom two corners, to clearly illustrate the structure of the Moffatt eddies.  The number, location, and pattern of the Moffatt eddies seen in Figure \ref{fig:resolved-solutions-ldc} match the results of \cite{GHIA}. However, for meshes with insufficient resolution, solutions using $\gamma = 0$
(i.e., Galerkin's method on the interior and Nitsche-based enforcement of tangential Dirichlet boundary conditions),
referred to as  ``unstabilized'' here, develop spurious eddies around walls and corners, which do not even qualitatively resemble reference solutions.  Figures \ref{fig:unstab-7500}--\ref{fig:stab-10000} compare these unstabilized solutions with those of our proposed scheme on a very coarse $16\times 16$ element mesh, at Reynolds numbers of $7500$ and $10000$ and varying B-spline degrees.  While the coarse mesh is fundamentally incapable of resolving the smallest near-wall eddies, the stabilized solutions have a clearer qualitative resemblance to the well-resolved reference solutions in Figure \ref{fig:resolved-solutions-ldc}, while the unstabilized solutions develop strange, unphysical patterns of vortices that disrupt the overall qualitative flow pattern outside of the main central vortex.  

\begin{figure}[t!]
\centering
   	 \begin{subfigure}[b]{0.329\textwidth}
   	 	\centering
		\includegraphics[height=2.25in]{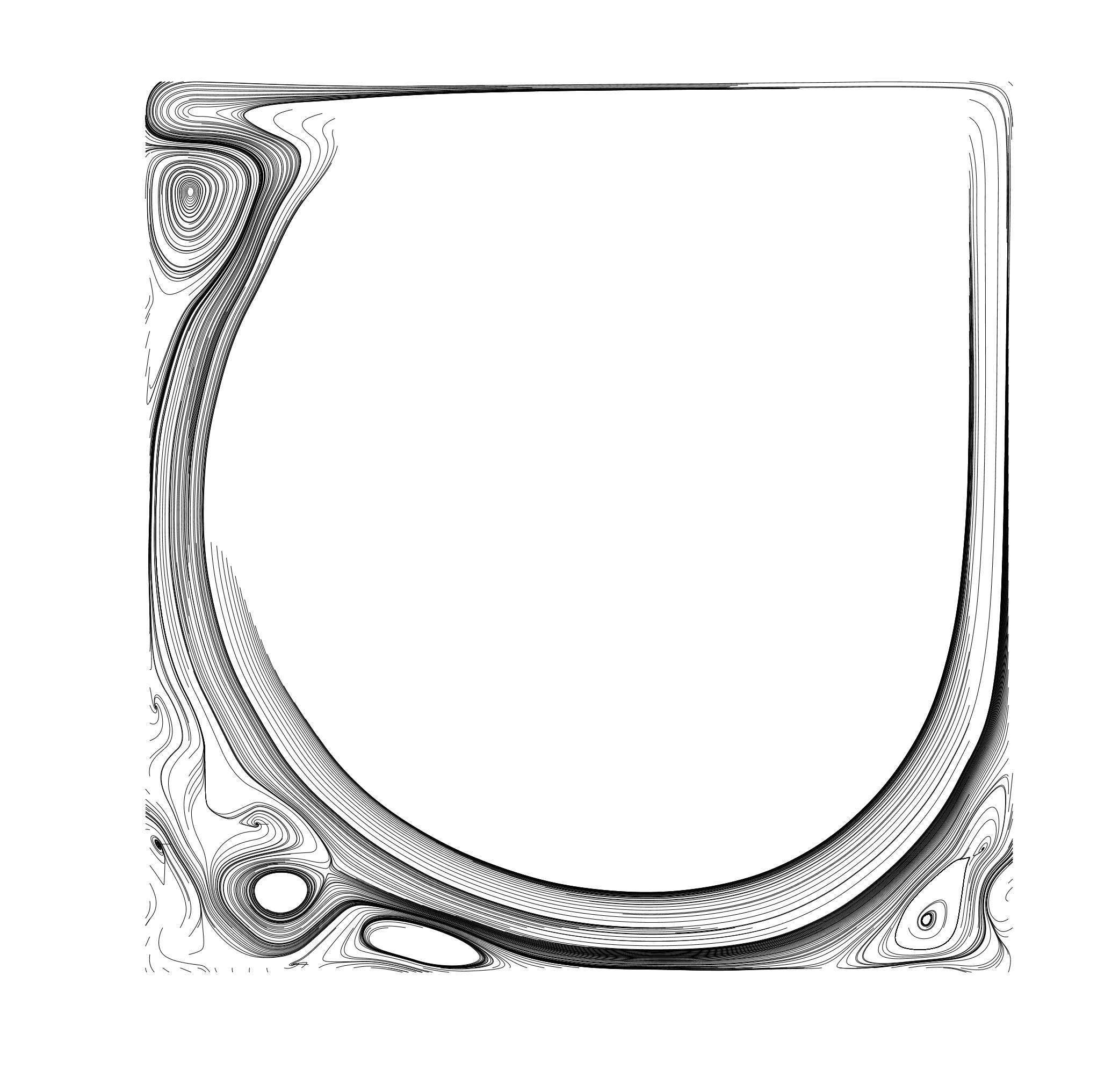}
		\caption{\centering $k'=1$}
	\end{subfigure}
	\begin{subfigure}[b]{0.329\textwidth}
   	 	\centering
		\includegraphics[height=2.25in]{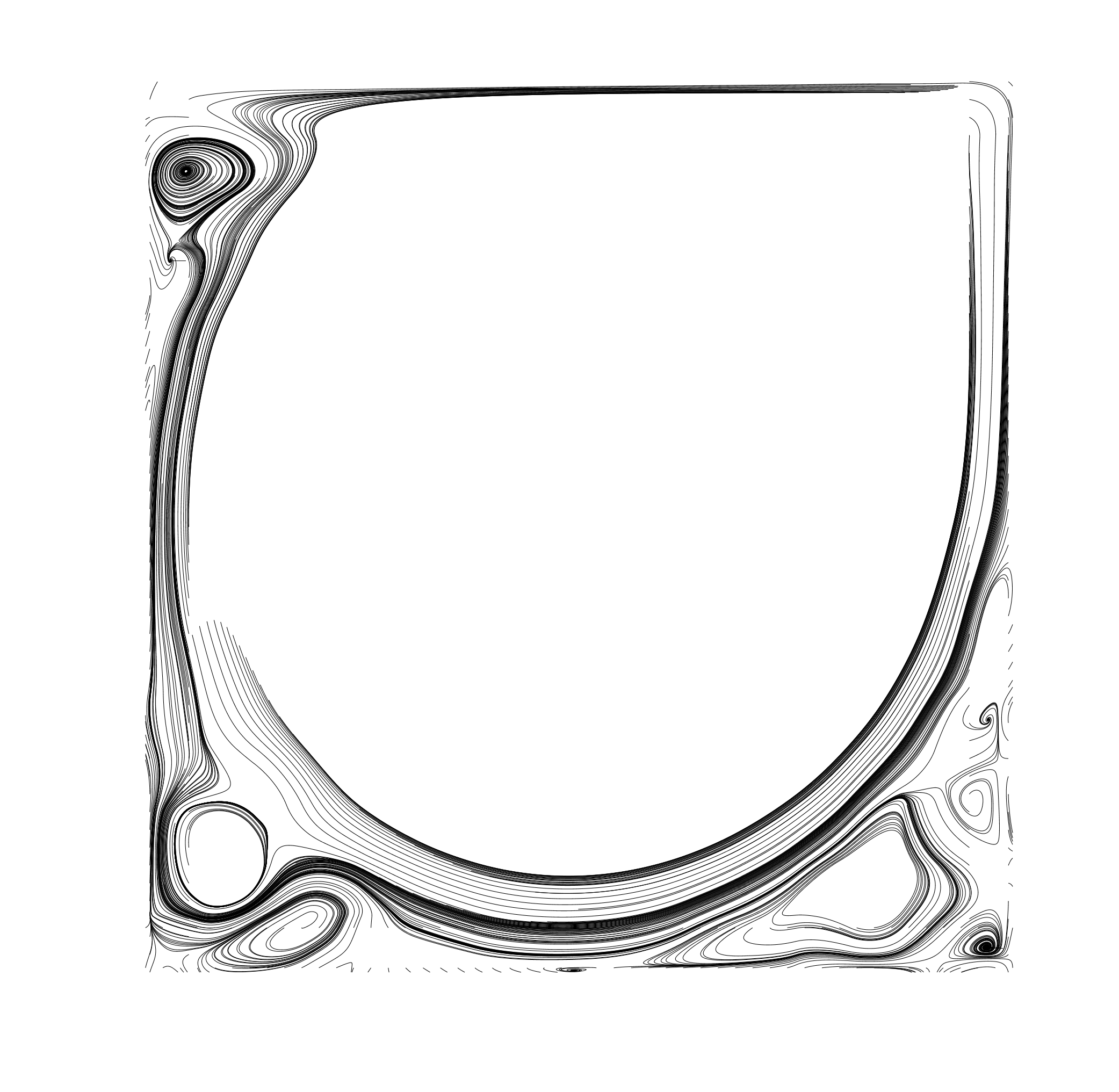}
		\caption{\centering $k'=2$}
	\end{subfigure}
\begin{subfigure}[b]{0.329\textwidth}
   	 	\centering
		\includegraphics[height=2.25in]{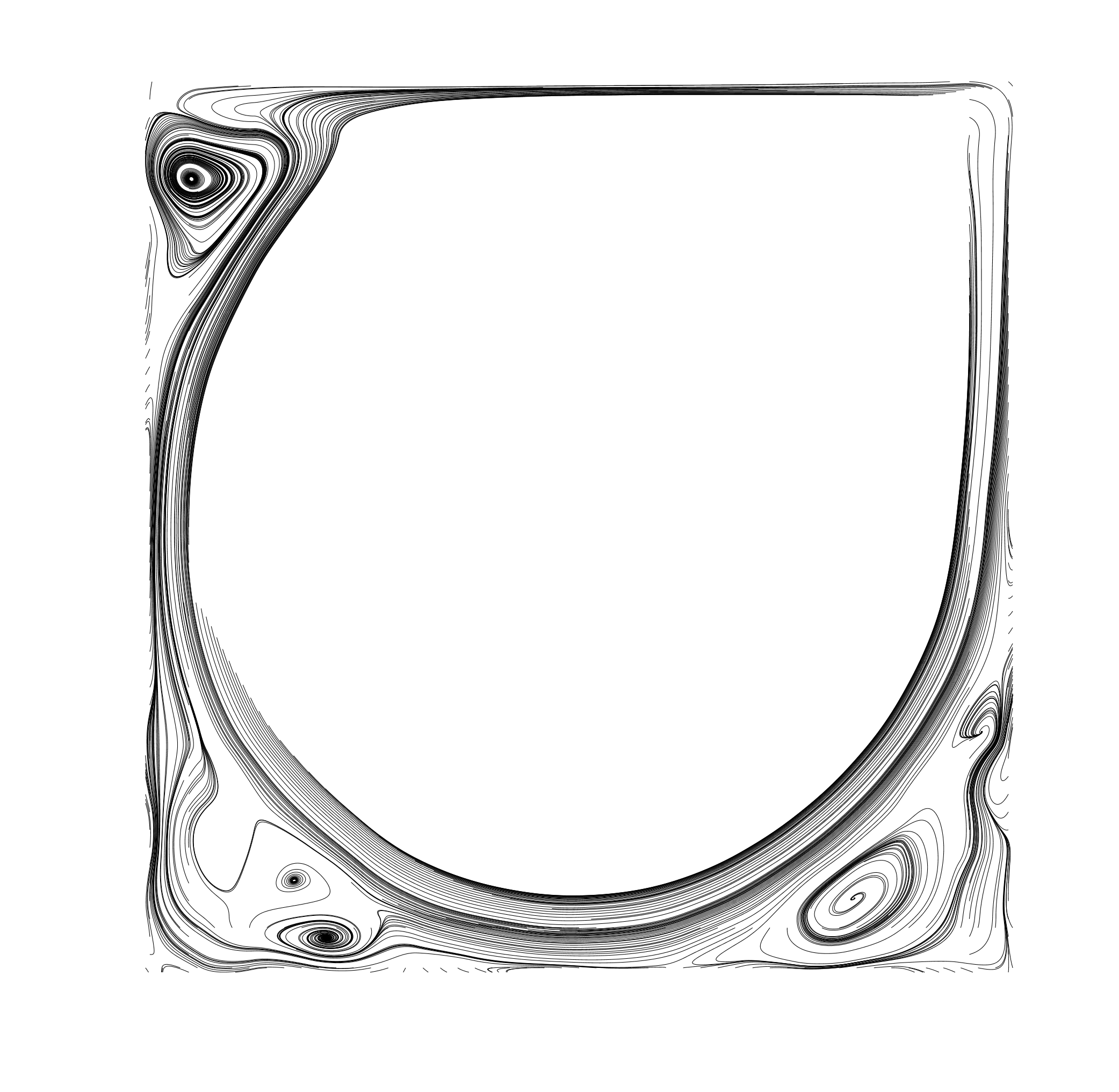}
		\caption{\centering  $k'=3$}
	\end{subfigure}
		\caption{Near wall streamline contours attained using a severely underresolved unstabilized divergence-conforming discretization on a $16\times16$ element mesh for the lid-driven cavity problem at $Re = 7500$.}\label{fig:unstab-7500}
\end{figure}

\begin{figure}[t!]
\centering
   	 \begin{subfigure}[b]{0.329\textwidth}
   	 	\centering
		\includegraphics[height=2.25in]{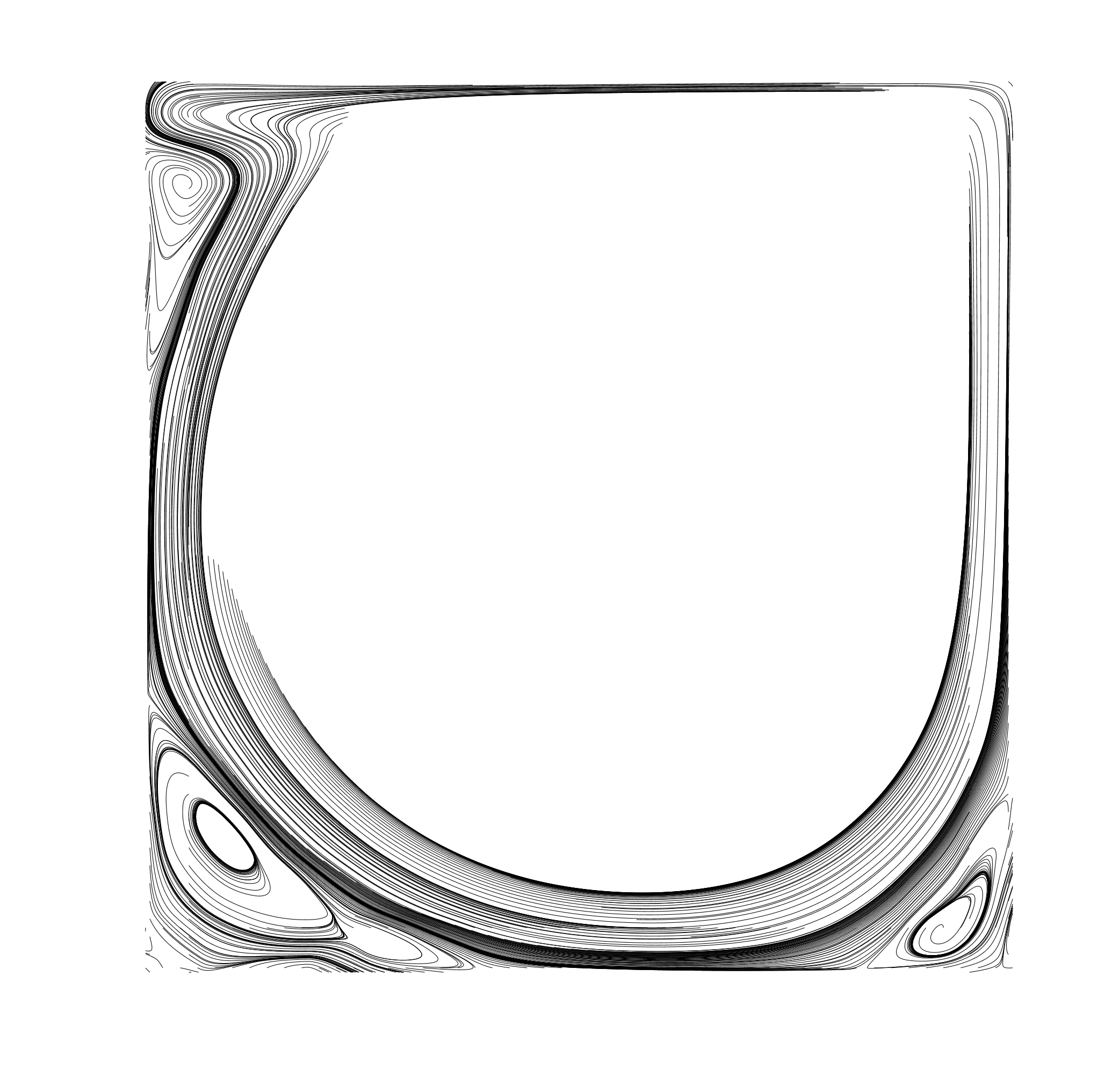}
		\caption{\centering $k'=1$}
	\end{subfigure}
	\begin{subfigure}[b]{0.329\textwidth}
   	 	\centering
		\includegraphics[height=2.25in]{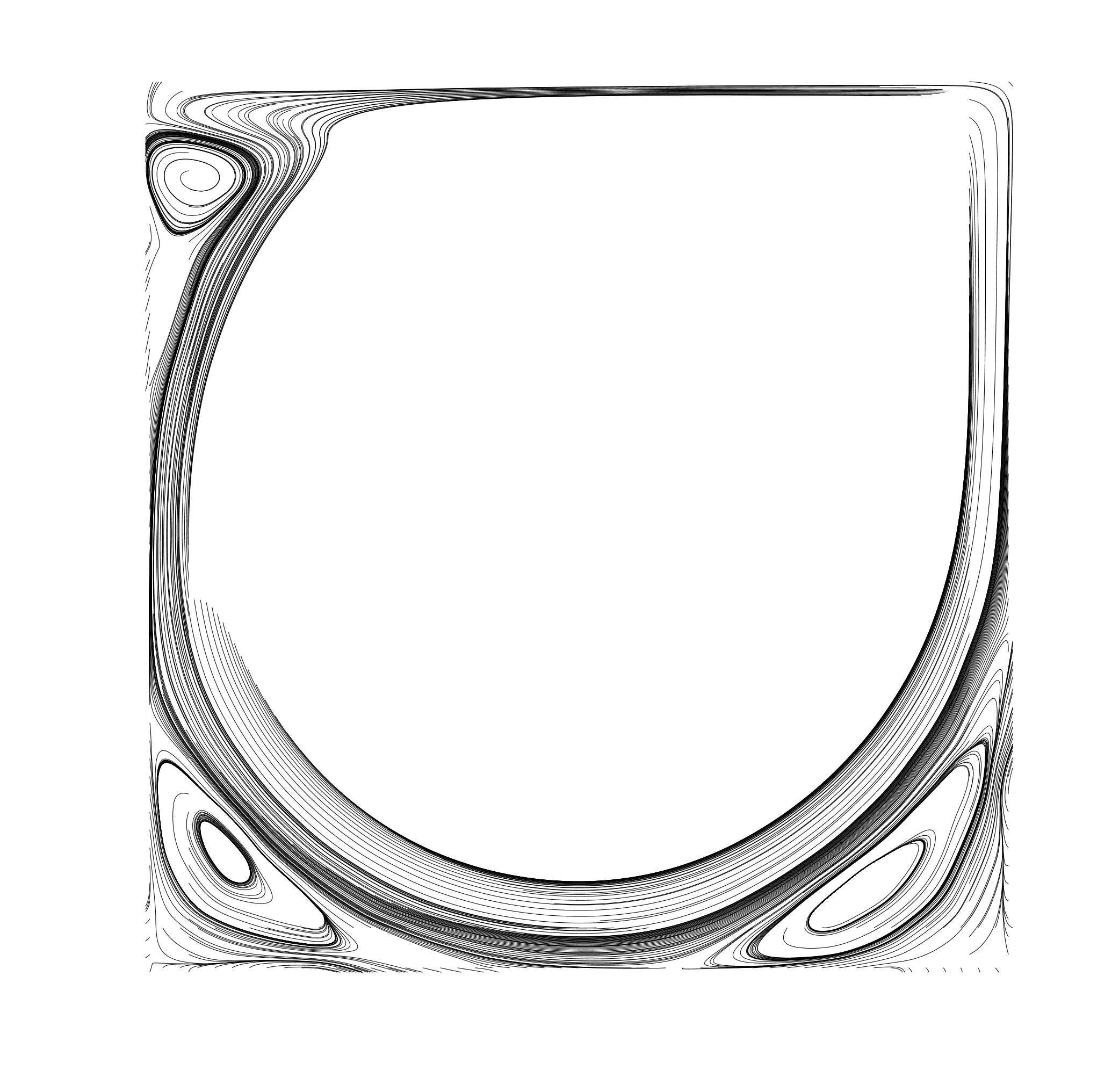}
		\caption{\centering $k'=2$}
	\end{subfigure}
\begin{subfigure}[b]{0.329\textwidth}
   	 	\centering
		\includegraphics[height=2.25in]{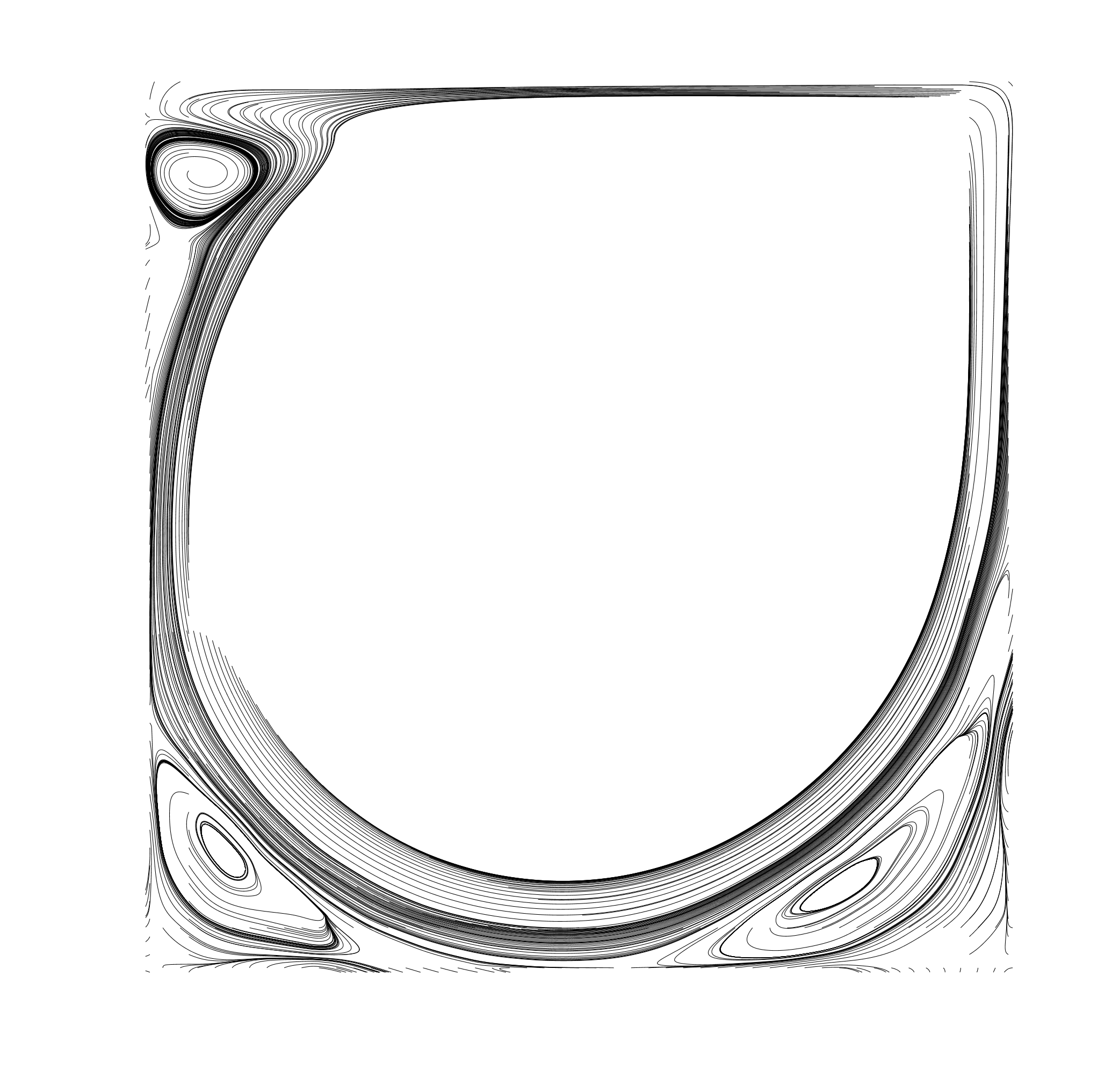}
		\caption{\centering  $k'=3$}
	\end{subfigure}
		\caption{Near wall streamline contours attained using a severely underresolved stabilized divergence-conforming discretization on a $16\times16$ element mesh for the lid-driven cavity problem at $Re = 7500$.}
\end{figure}

\begin{figure}[t!]
\centering
   	 \begin{subfigure}[b]{0.329\textwidth}
   	 	\centering
		\includegraphics[height=2.25in]{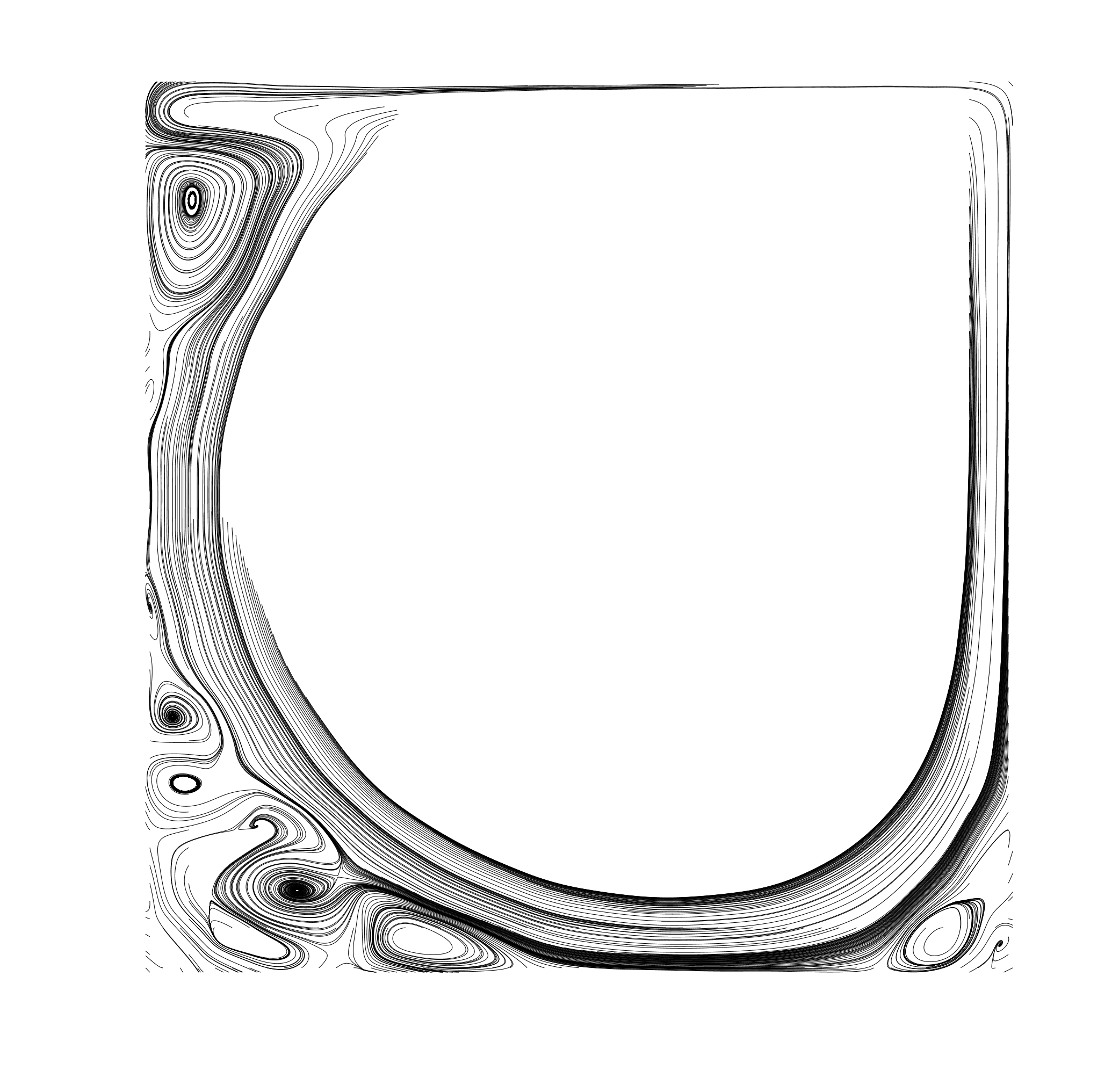}
		\caption{\centering $k'=1$}
	\end{subfigure}
	\begin{subfigure}[b]{0.329\textwidth}
   	 	\centering
		\includegraphics[height=2.25in]{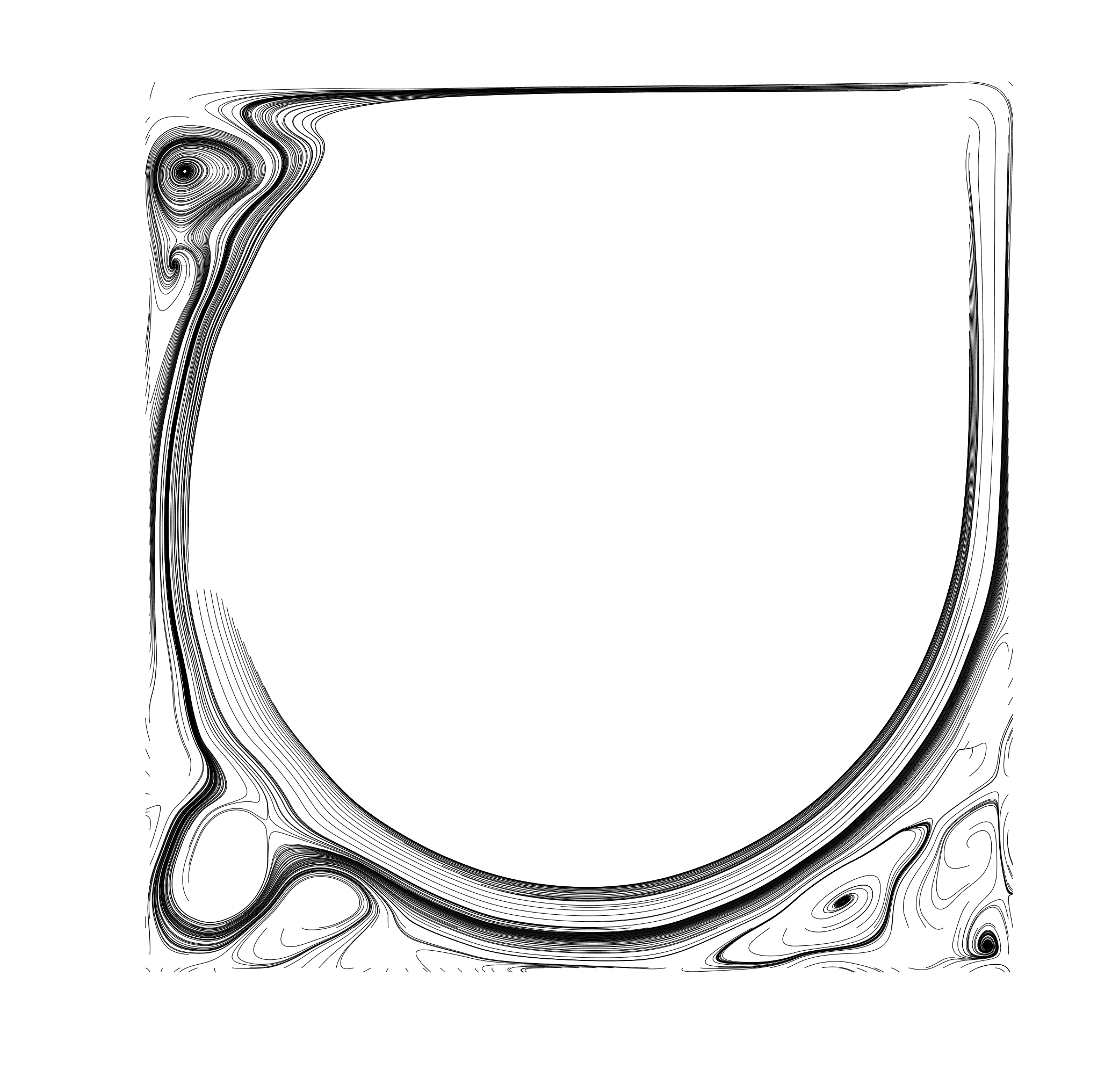}
		\caption{\centering $k'=2$}
	\end{subfigure}
\begin{subfigure}[b]{0.329\textwidth}
   	 	\centering
		\includegraphics[height=2.25in]{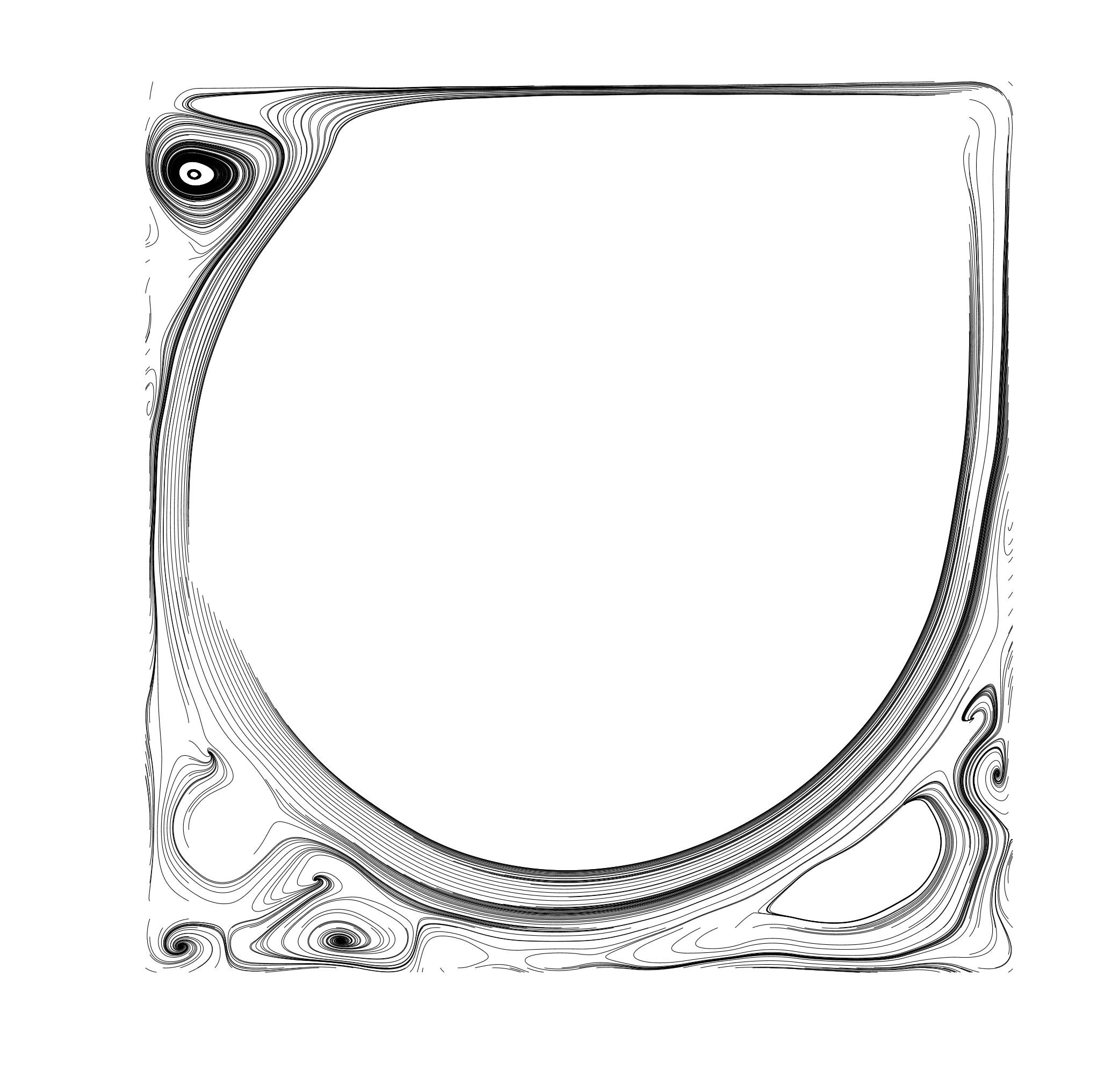}
		\caption{\centering  $k'=3$}
	\end{subfigure}
		\caption{Near wall streamline contours attained using a severely underresolved unstabilized divergence-conforming discretization on a $16\times16$ element mesh for the lid-driven cavity problem at $Re = 10000$.}
\end{figure}

\begin{figure}[t!]
\centering
   	 \begin{subfigure}[b]{0.329\textwidth}
   	 	\centering
		\includegraphics[height=2.25in]{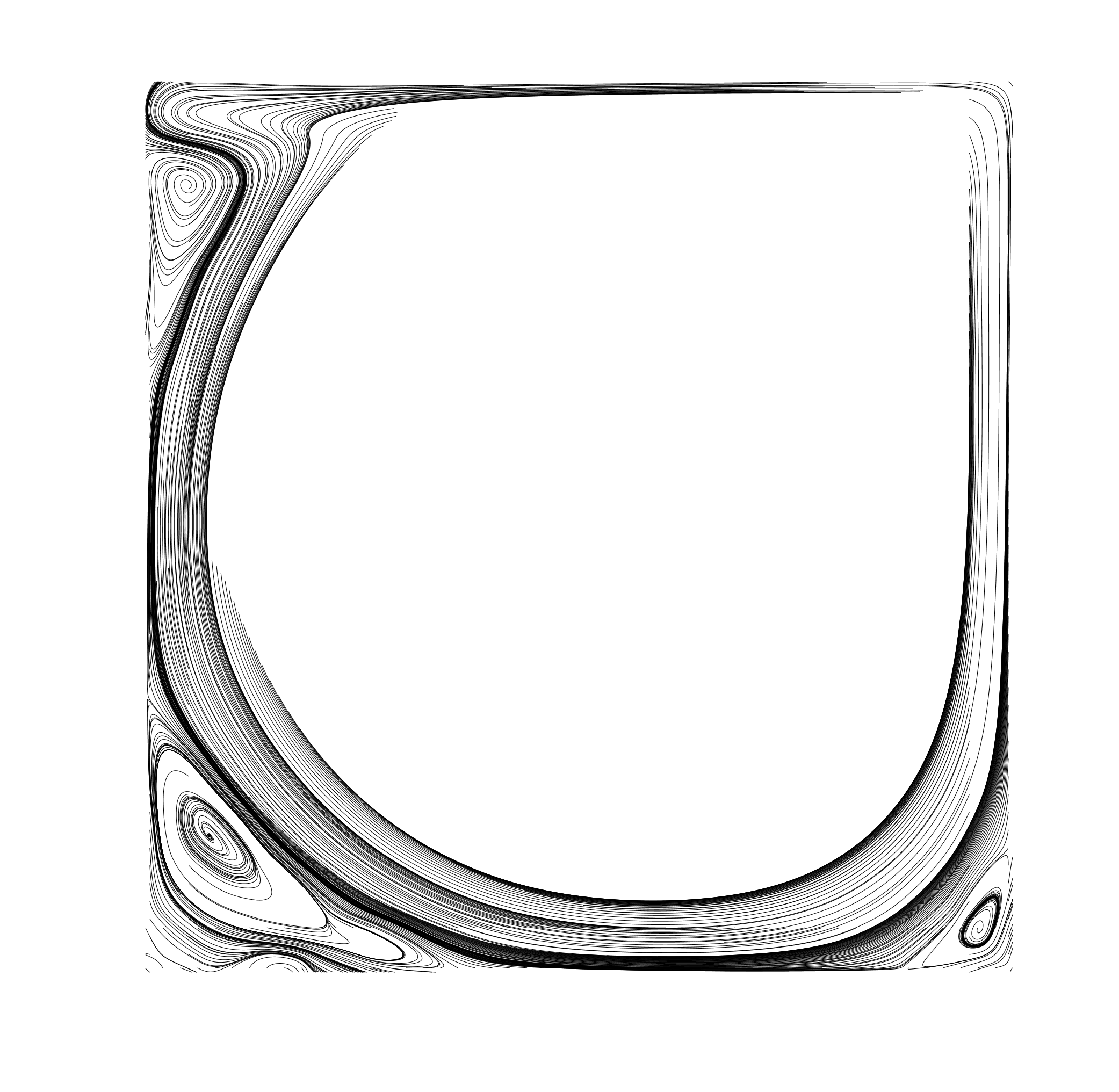}
		\caption{\centering $k'=1$}
	\end{subfigure}
	\begin{subfigure}[b]{0.329\textwidth}
   	 	\centering
		\includegraphics[height=2.25in]{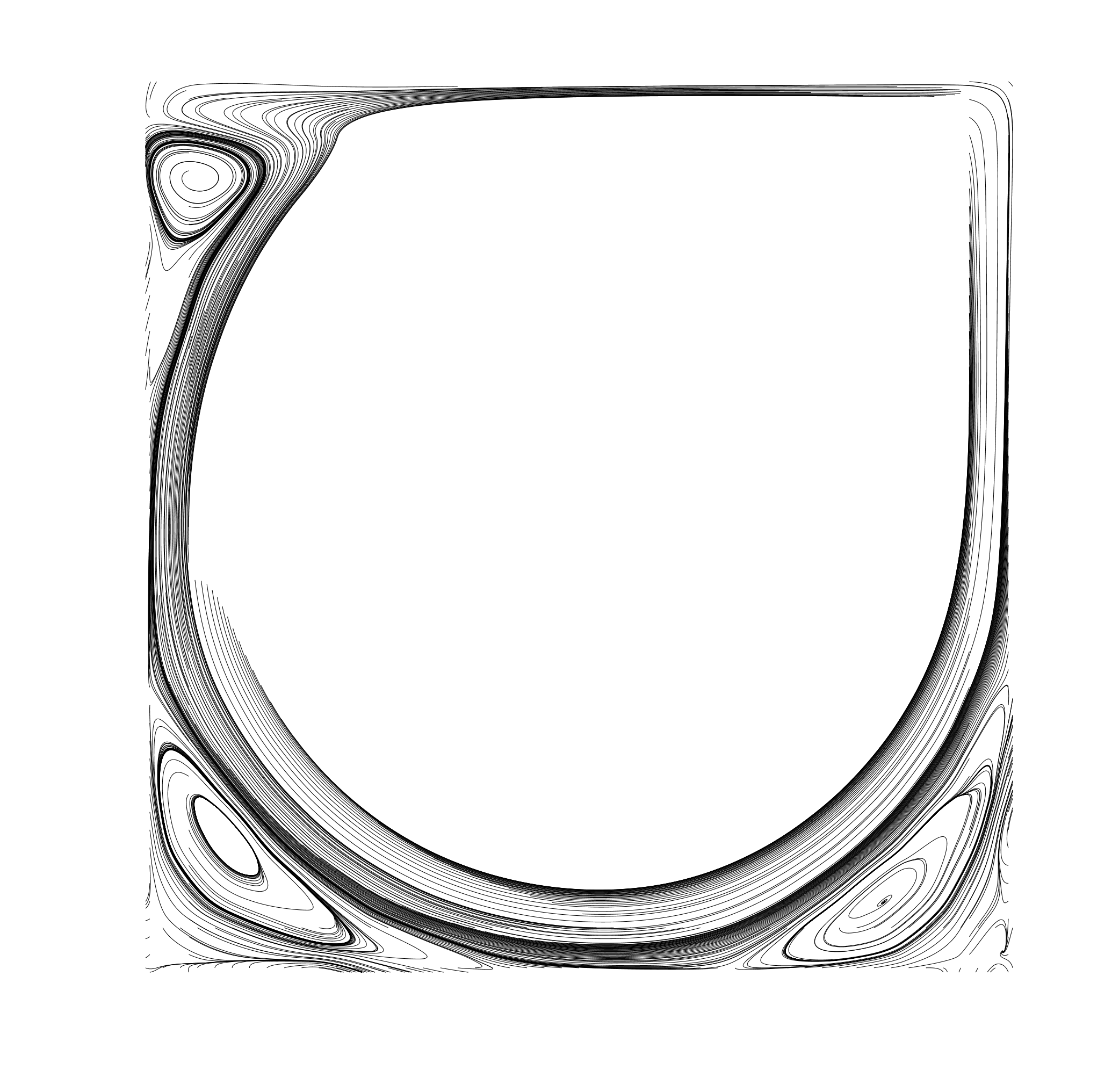}
		\caption{\centering $k'=2$}
	\end{subfigure}
\begin{subfigure}[b]{0.329\textwidth}
   	 	\centering
		\includegraphics[height=2.25in]{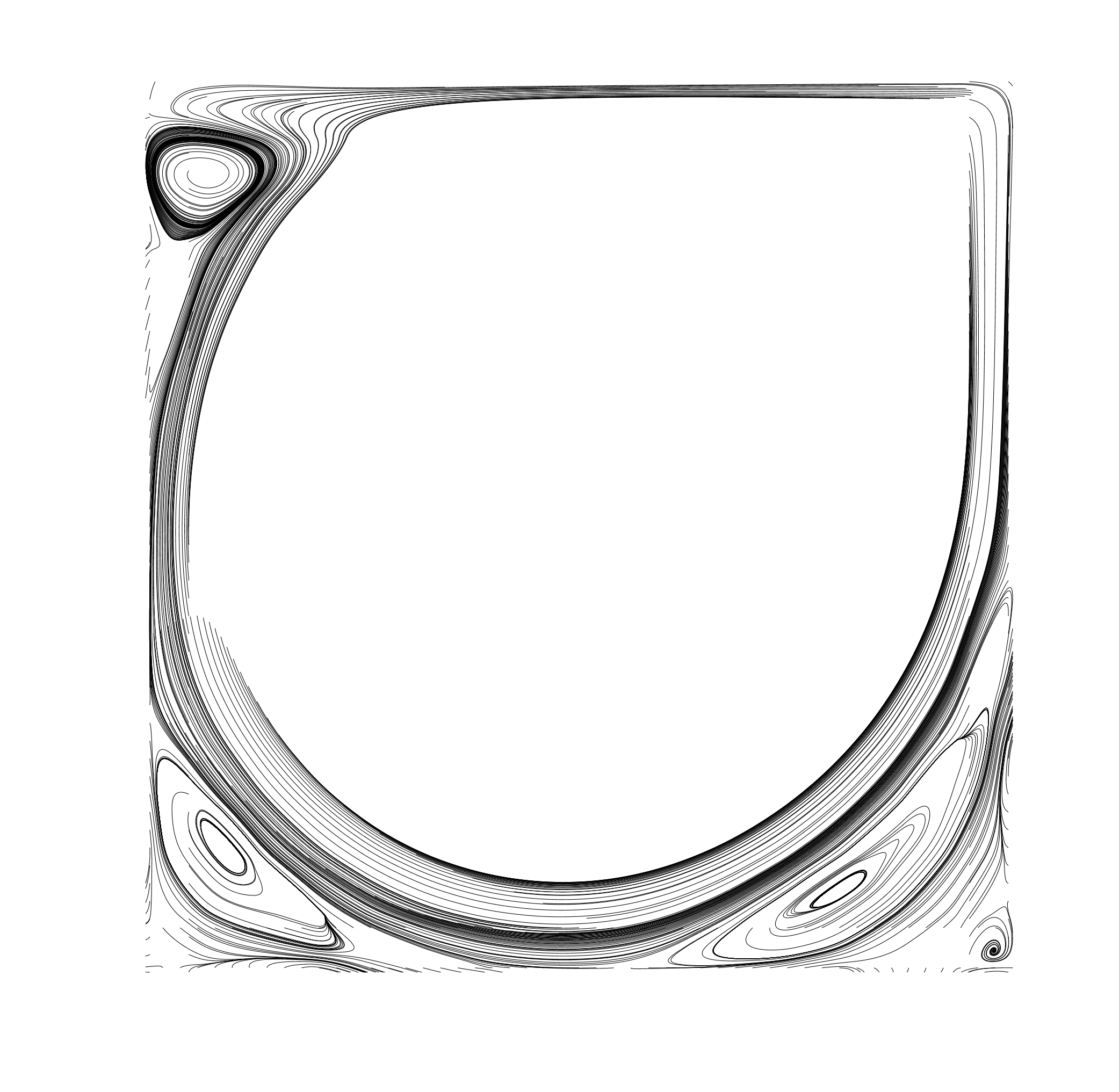}
		\caption{\centering  $k'=3$}
	\end{subfigure}
		\caption{Near wall streamline contours attained using a severely underresolved unstabilized divergence-conforming discretization on a $16\times16$ element mesh for the lid-driven cavity problem at $Re = 10000$.}\label{fig:stab-10000}
\end{figure}

\subsubsection{Centerline velocity profiles}\label{sec:ldc-quantitative}
We now consider a more quantitative comparison of centerline velocity profiles with the results of \cite{GHIA}.  Figures \ref{fig:ldc-vel-center-7500} and \ref{fig:ldc-vel-center-10000} make this comparison for both horizontal (black) and vertical (blue) velocities at Reynolds numbers of 7500 and 10000, respectively.  We consider both well-resolved results (cf. Figure \ref{fig:resolved-solutions-ldc}) on a $128\times 128$ mesh with splines of degree $k'=3$, alongside stabilized and unstabilized results on a very coarse mesh of $16\times 16$ elements, using various spline degrees.
From these results, we can see that the well-resolved solution matches the results of \cite{GHIA} closely.  This is not surprising, though it should be remarked that a uniform $128 \times 128$ element mesh was used for this solution.  As such, the solution is not fully resolved near the wall.  We believe the high accuracy of the solution field near the wall can be attributed to the weak enforcement of tangential Dirichlet boundary conditions using Nitsche's method.  Both the stabilized and unstabilized solutions for the mesh of $16\times 16$ elements are less accurate than the well-resolved solution for each degree $k' = 1, 2, 3$, but this is not surprising as the stabilized and unstabilized solutions are severely underresolved on this coarse mesh.  However, the unstabilized coarse results tend to exhibit more spurious oscillations than the stabilized ones.  This stabilizing effect is more evident in the horizontal component, which is directly driven by the prescribed tangential velocity at the top of the domain.

\begin{figure}[t!]
\centering
   	 \begin{subfigure}[b]{0.329\textwidth}
   	 	\centering
		\includegraphics[height=2in,width=2.2in]{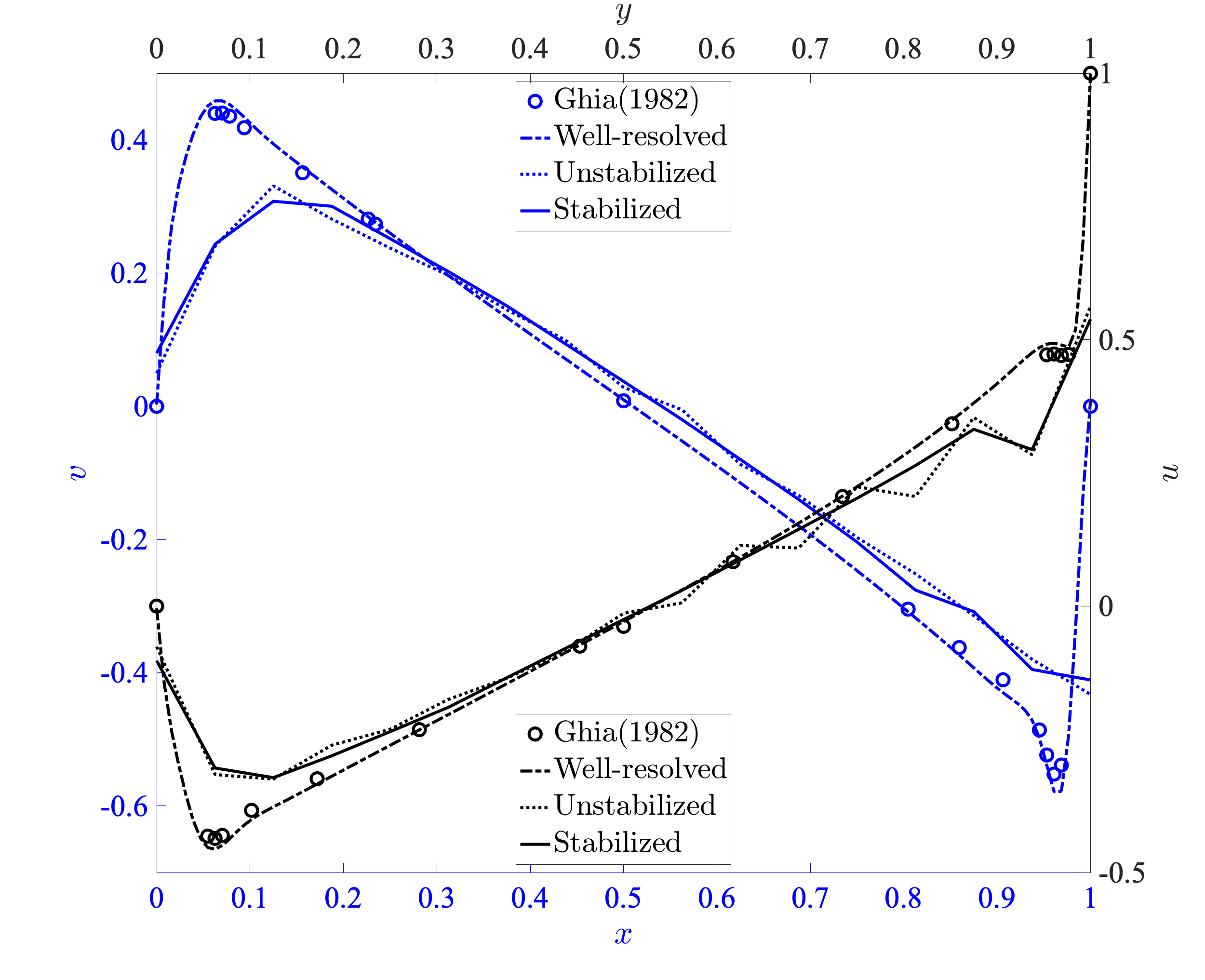}
		\caption{$k'=1$}
	\end{subfigure}
   	 \begin{subfigure}[b]{0.329\textwidth}
   	 	\centering
		\includegraphics[height=2in,width=2.2in]{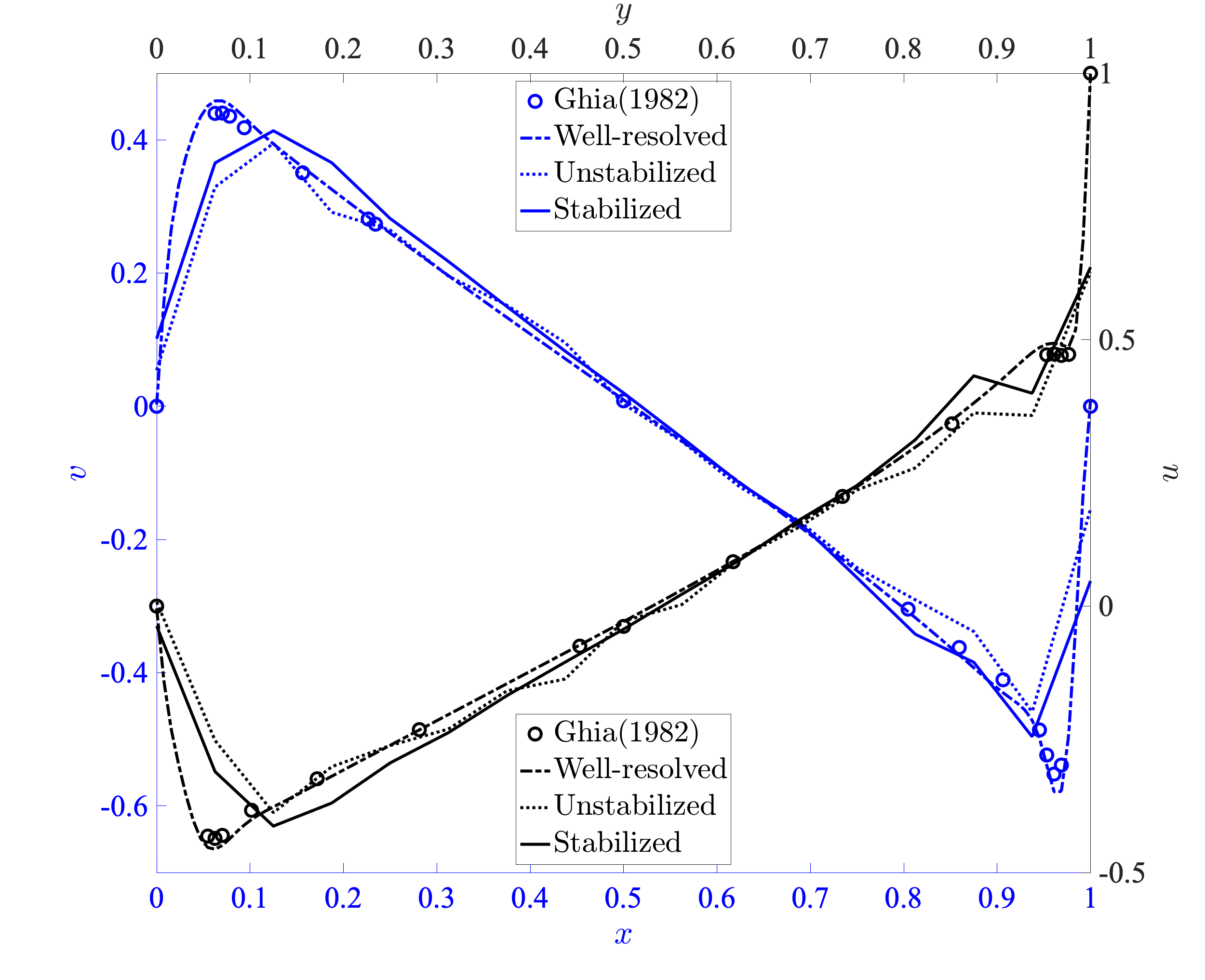}
		\caption{$k'=2$}
	\end{subfigure}
   	 \begin{subfigure}[b]{0.329\textwidth}
   	 	\centering
		\includegraphics[height=2in,width=2.2in]{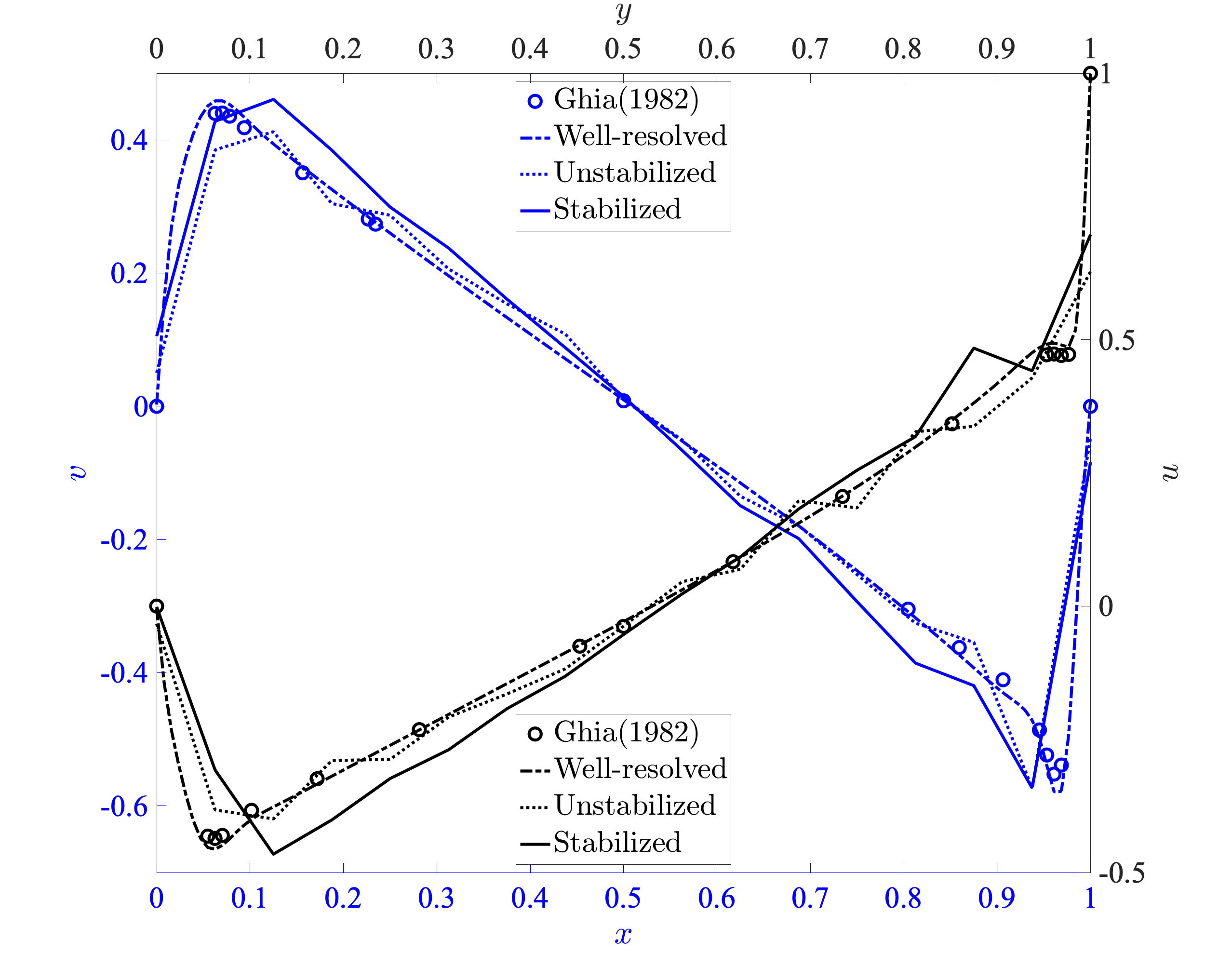}
		\caption{$k'=3$}
	\end{subfigure}
	\caption{Centerline velocity profiles attained using a well-resolved discretization, a severely underresolved unstabilized divergence-conforming discretization on a $16\times16$ element mesh, and a severely underresolved stabilized divergence-conforming discretization on a $16\times16$ element mesh for the lid-driven cavity problem at $Re = 7500$.}\label{fig:ldc-vel-center-7500}
\end{figure}

\begin{figure}[t!]
\centering
   	 \begin{subfigure}[b]{0.329\textwidth}
   	 	\centering
		\includegraphics[height=2in,width=2.2in]{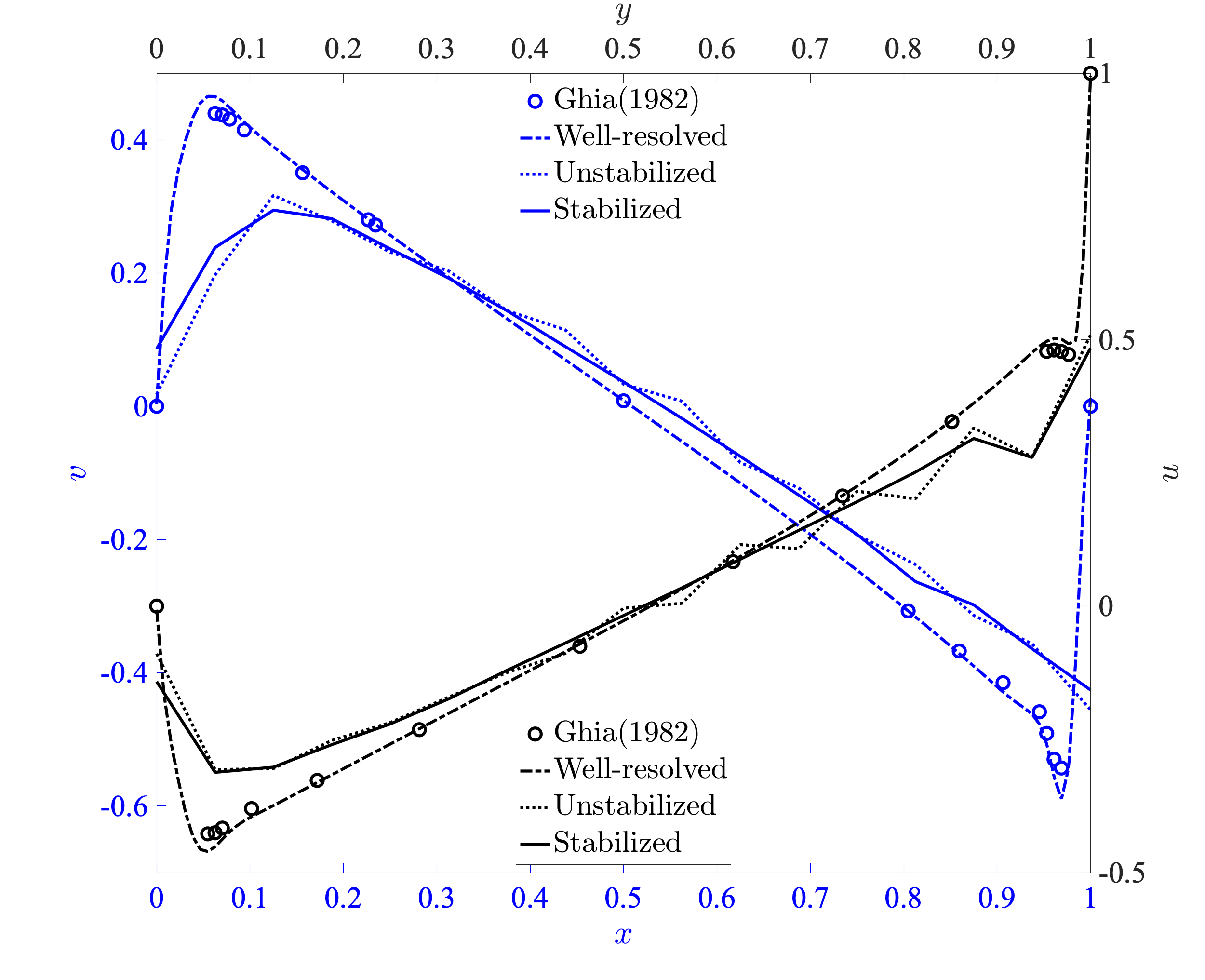}
		\caption{$k'=1$}
	\end{subfigure}
   	 \begin{subfigure}[b]{0.329\textwidth}
   	 	\centering
		\includegraphics[height=2in,width=2.2in]{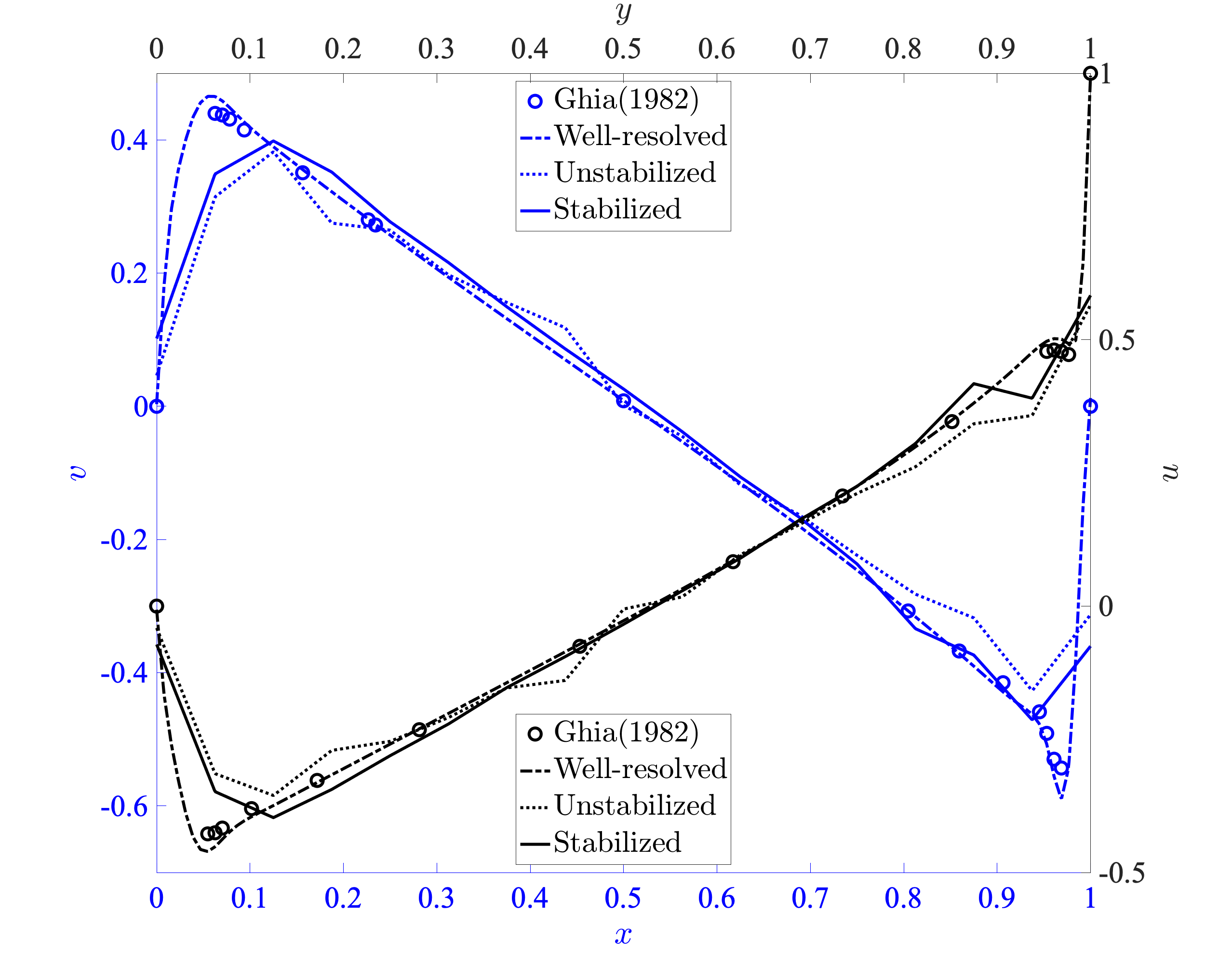}
		\caption{$k'=2$}
	\end{subfigure}
   	 \begin{subfigure}[b]{0.329\textwidth}
   	 	\centering
		\includegraphics[height=2in,width=2.2in]{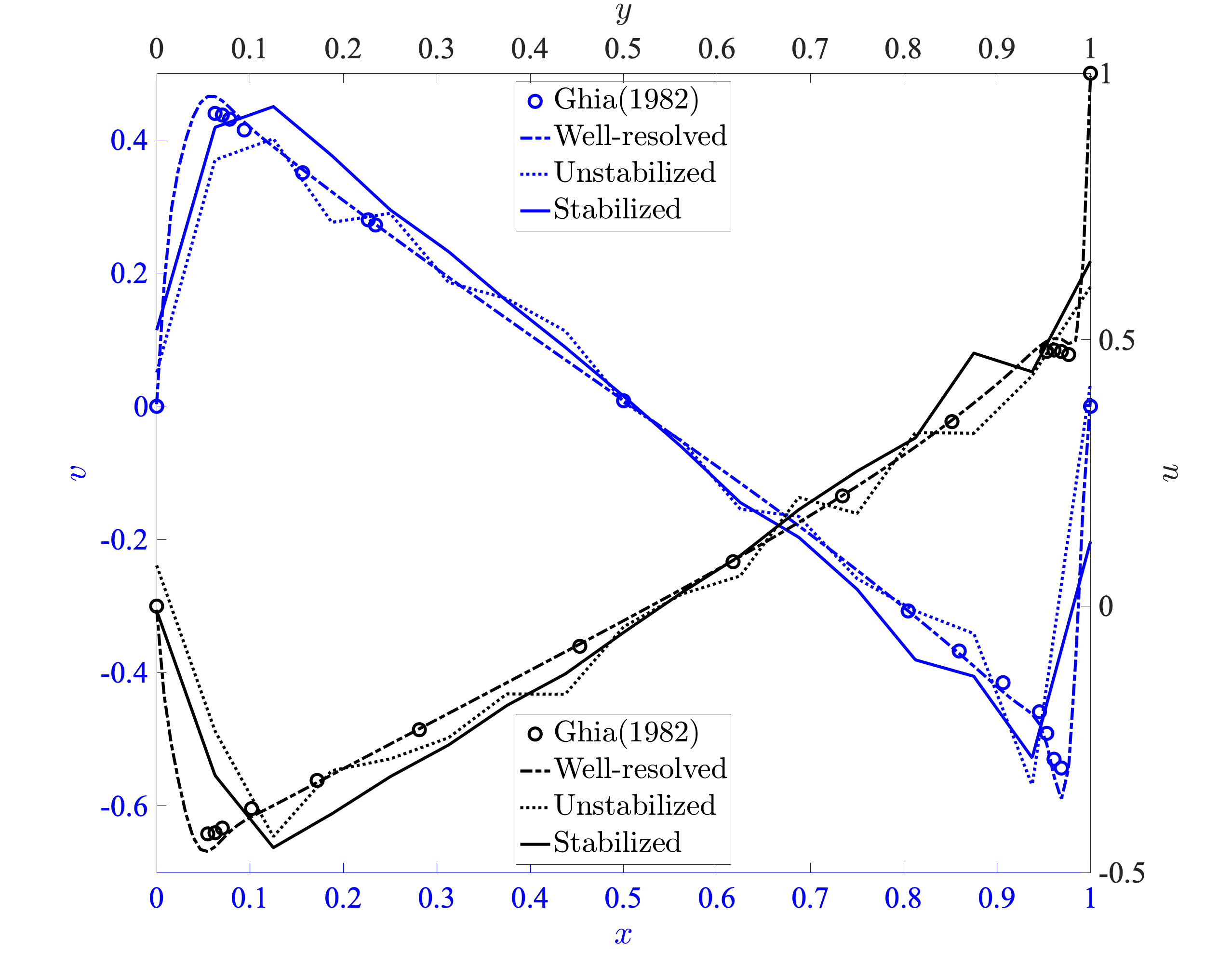}
		\caption{$k'=3$}
	\end{subfigure}
	\caption{Centerline velocity profiles attained using a well-resolved discretization, a severely underresolved unstabilized divergence-conforming discretization on a $16\times16$ element mesh, and a severely underresolved stabilized divergence-conforming discretization on a $16\times16$ element mesh for the lid-driven cavity problem at $Re = 10000$.}\label{fig:ldc-vel-center-10000}
\end{figure}

\subsection{3D Taylor--Green vortex problem}
\label{tg3d}
As a final example, we apply the proposed method in 3D, to the Taylor--Green vortex benchmark, which evolves from the periodic initial velocity
\begin{align}
\boldsymbol{u}_0(\boldsymbol{x}) = \textrm{sin}(x_1) \textrm{cos}(x_2) \textrm{cos}(x_3)\boldsymbol{e}_1  -\textrm{cos}(x_1)\textrm{sin}(x_2) \textrm{cos}(x_3)\boldsymbol{e}_2\text{ .}
\end{align}
This problem can be restricted to the finite cube $\lbrack 0,\pi\rbrack^3$ with free-slip boundary conditions (corresponding to symmetry planes of the problem data).  At sufficiently high Reynolds numbers, the initial laminar flow structure breaks down chaotically into smaller eddies, making it a useful case study in the transition to turbulence \cite{BO83,EV13,EV20,VA17}.  In this paper, we focus on the case of $\textrm{Re}=1600$, over a time interval of length $T=10$.  To explore the behavior of the method in an underresolved setting, we use a mesh of $32^3$ elements.  Our discussion of the results will look at several quantities of interest in turbulent flows.

\subsubsection{Dissipation rate}
The rate $\epsilon$ of energy dissipation in the semidiscrete problem ($\mathcal{W}_h$) can be split into resolved dissipation, $\epsilon_r$, and model dissipation, $\epsilon_m$:
\begin{equation}
    \epsilon = -\frac{dE_k}{dt} = \epsilon_r + \epsilon_m\text{ ,}
\end{equation}
where
\begin{equation}
    E_k = \frac{1}{2V}\Vert\boldsymbol{u}_h\Vert_{\mathbf{L}^2}^2
\end{equation}
is the total kinetic energy, normalized by the volume $V$ of the domain.
The resolved dissipation rate corresponds to directly plugging the resolved velocity $\boldsymbol{u}_h$ into the continuous problem's formula for dissipation:
\begin{equation}
\epsilon_r = \frac{2\nu}{V} (\nabla^s \boldsymbol{u}_h, \nabla^s \boldsymbol{u}_h)\text{ .}
\label{resolved}
\end{equation}
The model dissipation is the additional dissipation due to the skeleton stabilization:
\begin{equation}
\epsilon_{m} =\frac{1}{V} \sum_{e \in \mathcal{E}_0} (\eta [\![\partial^{\alpha' + 1}_{\boldsymbol{n}} \boldsymbol{u}_h]\!], [\![ \partial^{\alpha' + 1}_{\boldsymbol{n}} \boldsymbol{u}_h]\!])_e\text{ .}
\end{equation}
This additional dissipation can be viewed as an Implicit Large Eddy Simulation (ILES) filter, as discussed further in \cite{BU0606}.
%

\begin{figure}[b!]
\centering
   	 \begin{subfigure}[b]{0.49\textwidth}
   	 	\centering
		\includegraphics[height=3in]{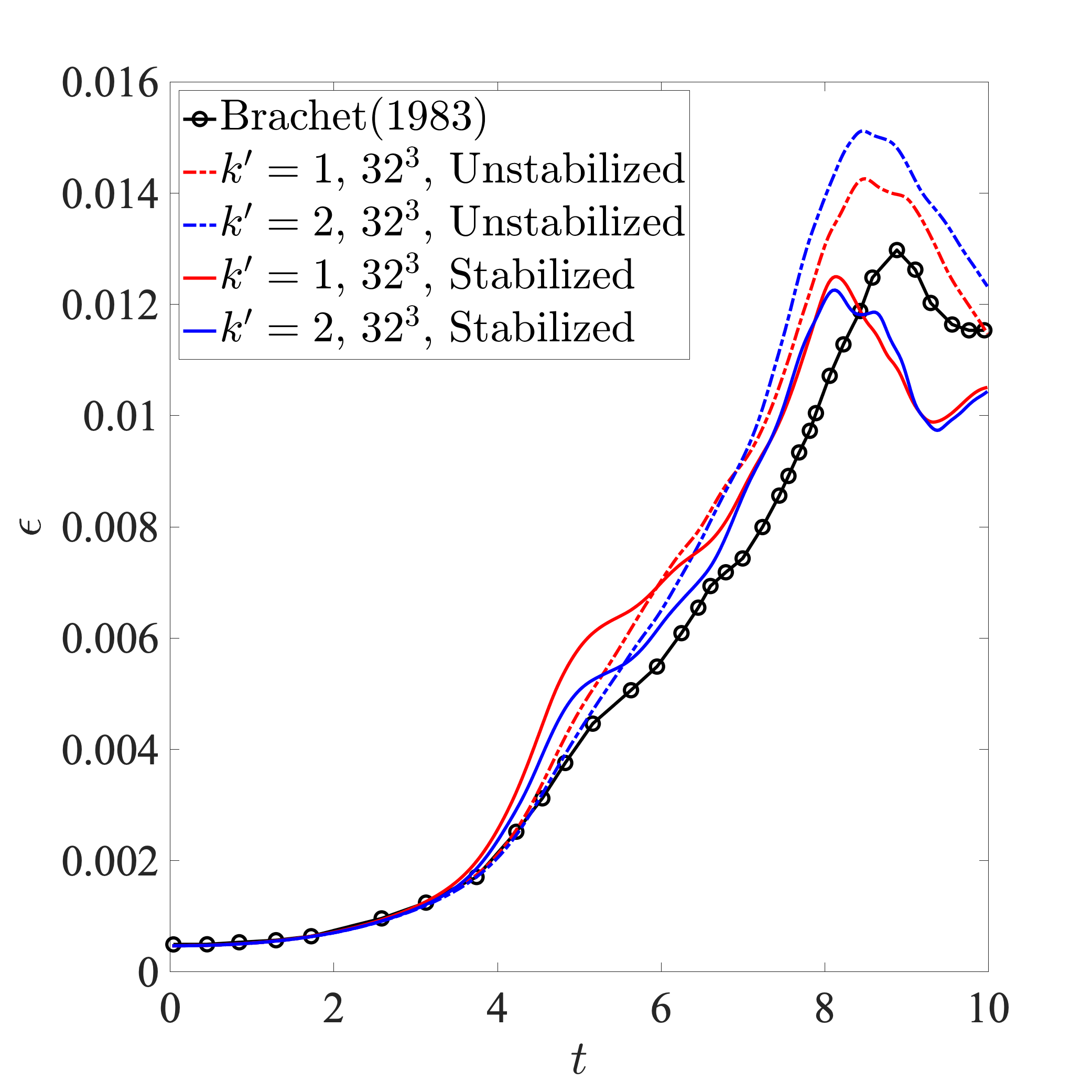}
		\caption{Total dissipation rate}
		\label{18a}
	\end{subfigure}
	\begin{subfigure}[b]{0.49\textwidth}
   	 	\centering
		\includegraphics[height=3in]{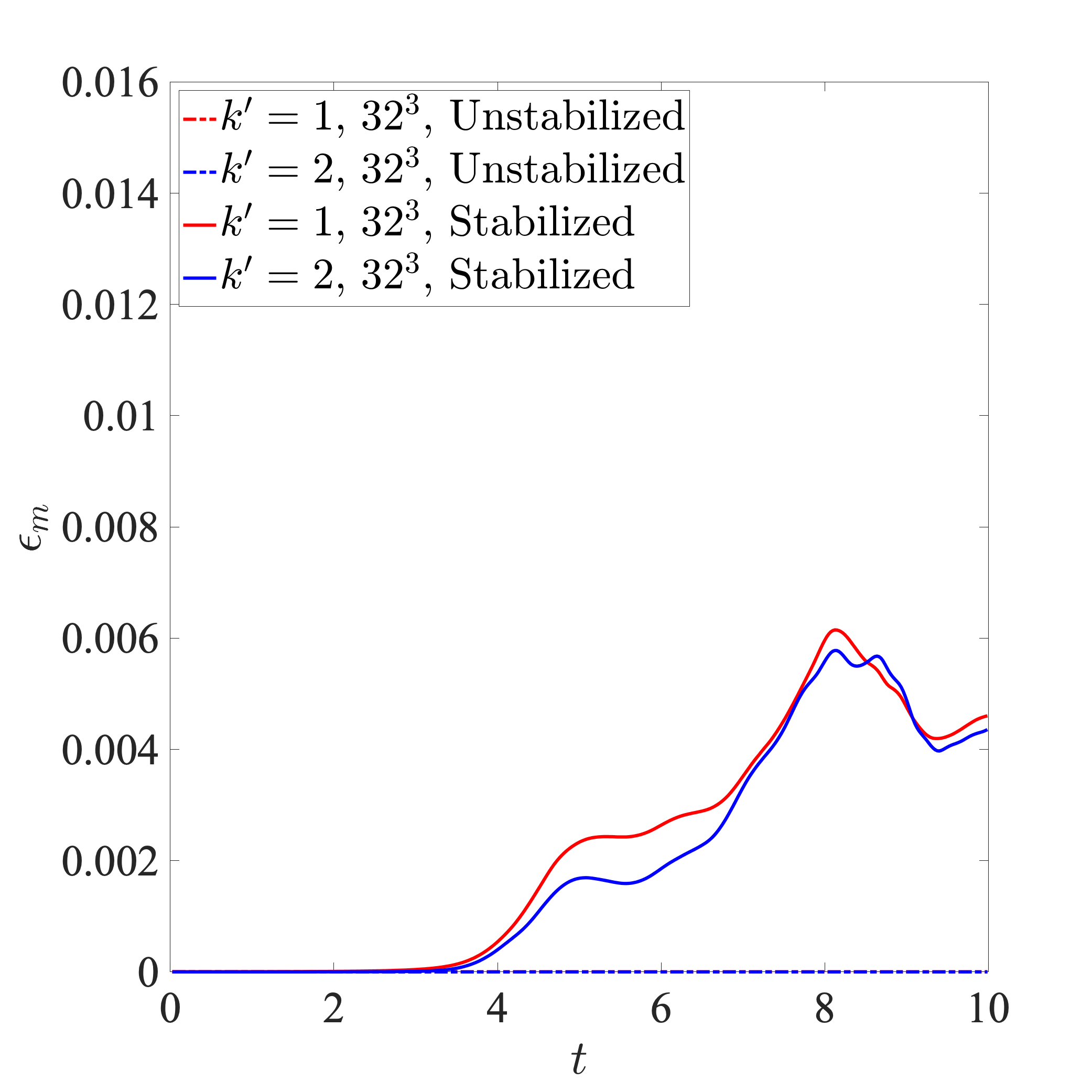}
		\caption{Modeled dissipation rate}
		\label{18b}
	\end{subfigure}
	\caption{Evolution of total and modeled dissipation rate for the 3D Taylor--Green vortex problem at $Re=1600$.}
	\label{fig18}
\end{figure}

Figure \ref{fig18} shows the evolution of the total and model dissipation rates for divergence-conforming skeleton stabilization, alongside the Direct Numerical Simulation (DNS) results of \cite{BO83} (which were computed using a spectral method with $256^3$ Fourier modes).  In the unstabilized computations, all dissipation is due to resolved dissipation, because $\epsilon_m = 0$.  For an $\mathbf{H}^1$-stable method on a coarse mesh, one would expect the resolved dissipation to be substantially lower than the true dissipation, because rapidly-varying components of the full solution would be filtered out.  The unstabilized resolved dissipation is, however, higher than the dissipation in the reference solution, suggesting the presence of large spurious gradients in the coarse discrete velocity.  In the stabilized computations, however, a significant portion of the dissipation is due to $\epsilon_m$.  Figure \ref{f19} compares the resolved dissipation of the stabilized solution to data from \cite{y17}, where a DNS velocity solution is processed with a box filter of width matching the element size in our computations.  This more clearly demonstrates how the skeleton stabilization can be viewed as an implicit filter, and how the absence of stabilization causes energy to be dissipated through spurious coarse-scale velocity gradients.

Comparing results for $k'=1$ and $k'=2$, we see that (as is typically the case with rough or rapidly-varying exact solutions), we gain little from order elevation on a coarse mesh after $t\sim 7$, where the flow reaches its maximum dissipation rate and is dominated by small, unresolved eddies.  We do see a somewhat more accurate dissipation rate with $k'=2$ in the early stages of the computation, during the initial transition to turbulence.  When using B-splines of maximal continuity, this improvement is attained through only a minimal increase in the total degrees of freedom.  However, constant factors in the cost of assembly and solution procedures may increase substantially without specialized quadrature or solver technologies that take advantage of increased smoothness (e.g., \cite{hughes2010efficient}).  

\begin{figure}[t!]
\includegraphics[height = 3in]{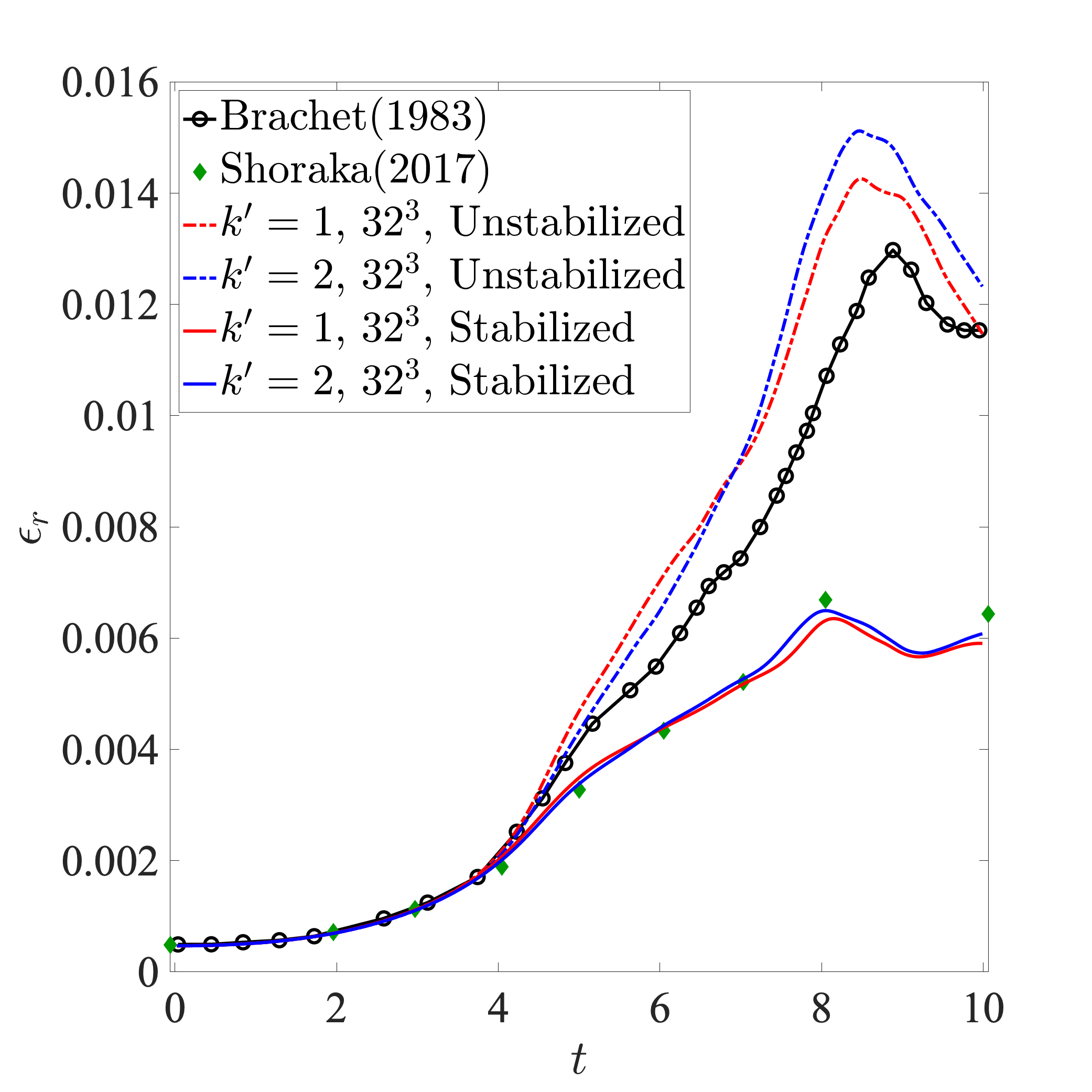}
\centering
\caption{Evolution of resolved dissipation rate for the 3D Taylor--Green vortex problem at $Re = 1600$.}
\label{f19}
\end{figure}

\subsubsection{Energy spectrum}
To attain a better understanding of how energy is distributed among the scales of motion for both the skeleton-stabilized and unstabilized divergence-conforming B-spline discretizations, we examine energy spectra resulting from our computations.  In particular, we examine the time-instantaneous energy spectra $\hat{E}_{\kappa}$ at time $t = 9$ as a function of wavenumber $\kappa$.  This time is near the time of peak dissipation.  Figure \ref{f20} shows the energy spectra attained using both the stabilized and unstabilized discretizations alongside the DNS spectra attained in \cite{BO83}.  The energy spectra attained using the stabilized discretizations are in good agreement with the DNS spectra.  The stabilized spectrum for degree $k' = 2$ is marginally more accurate than the stabilized spectrum for degree $k' = 1$ except for the very highest resolved wavenumbers.  There is a large pile-up of energy at the highest resolved wavenumbers for the unstabilized discretizations that is not seen for the stabilized discretizations.  This pile-up of energy, like the resolved dissipation plots discussed previously, suggests the presence of large spurious gradients in the unstabilized velocity fields.  This is not surprising, as there is no dissipation mechanism in the unstabilized discretizations except for viscous dissipation.  The unstabilized discretizations also underpredict the energy contained in the low wavenumbers, which is likely due to the fact that the unstabilized discretizations exhibit excessive total dissipation shortly before $t = 9$.  This dissipation results in a pronounced drop in total kinetic energy $E_k$ versus the stabilized discretizations, as seen in Figure \ref{f21}.

\begin{figure}[t!]
\includegraphics[height = 3in]{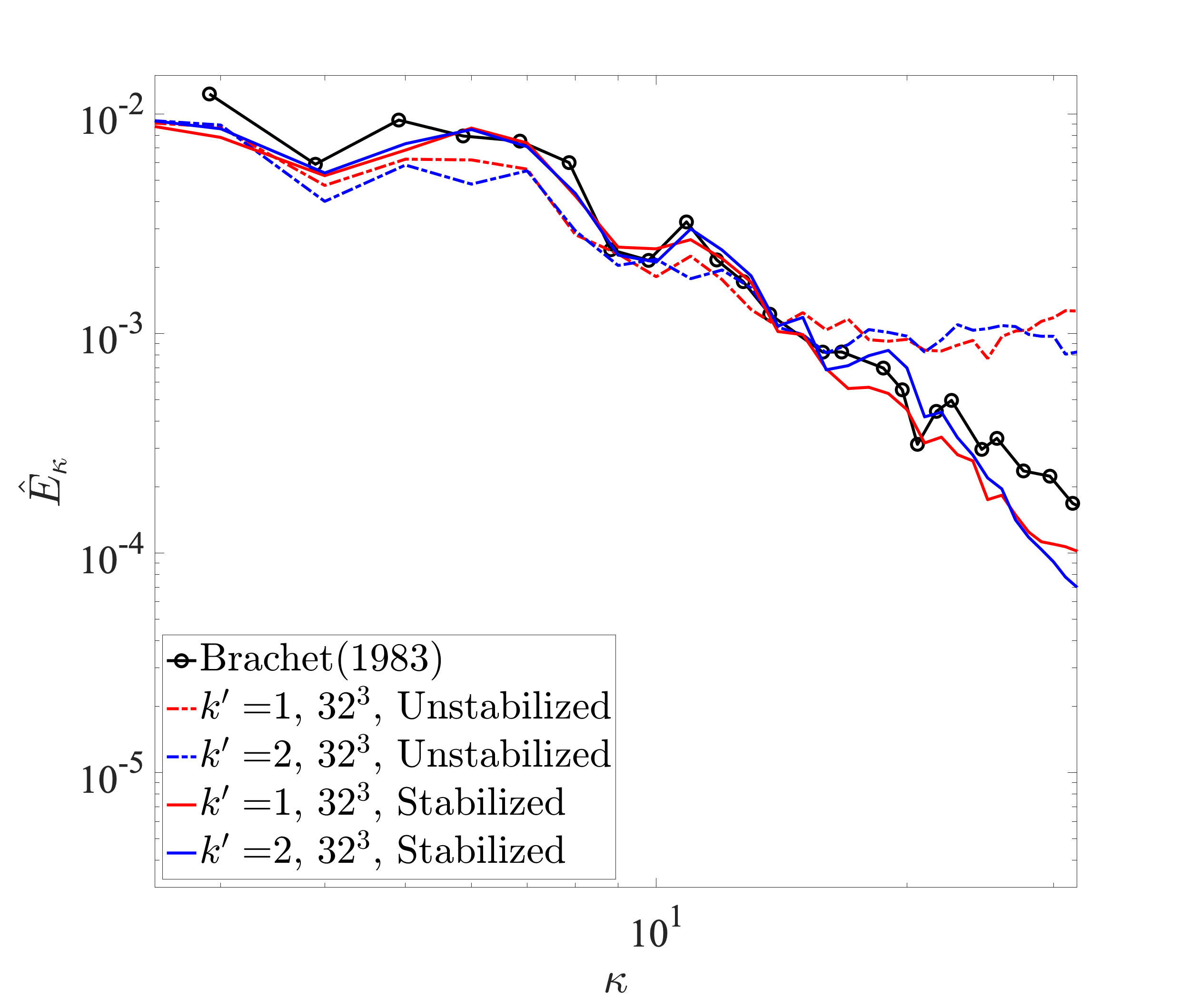}
\centering
\caption{Energy spectrum at $t = 9$ for the 3D Taylor--Green vortex problem at $Re = 1600$.}
\label{f20}
\end{figure}

\begin{figure}[t!]
\includegraphics[height = 3in]{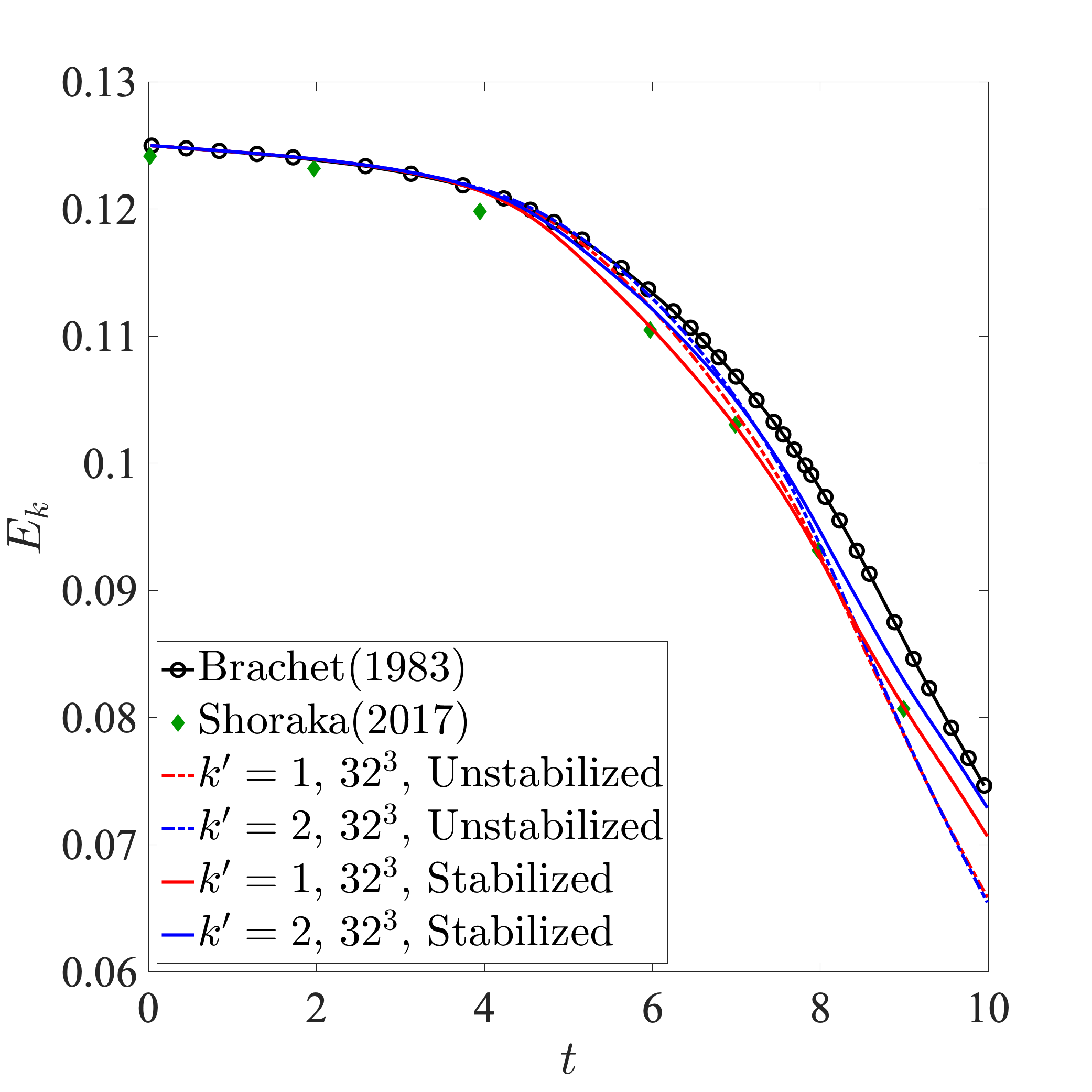}
\centering
\caption{Evolution of total kinetic energy for the 3D Taylor--Green vortex problem at $Re = 1600$.}
\label{f21}
\end{figure}

\subsubsection{$Q$-criterion}
To visualize the energy pile-up shown in the last section in an alternative way, we turn to the $Q$-criterion, defined as the second invariant of the velocity gradient tensor \cite{KO07}: 

\begin{equation}
Q = \frac{1}{2} \left( \boldsymbol{\Omega}:\boldsymbol{\Omega}  - \boldsymbol{S}:\boldsymbol{S} \right)\text{ .}
\end{equation}

\begin{figure}[t!]
\centering
   	 \begin{subfigure}[b]{0.32\textwidth}
   	 	\centering
		\includegraphics[height=2.1in, width = 0.9999\textwidth]{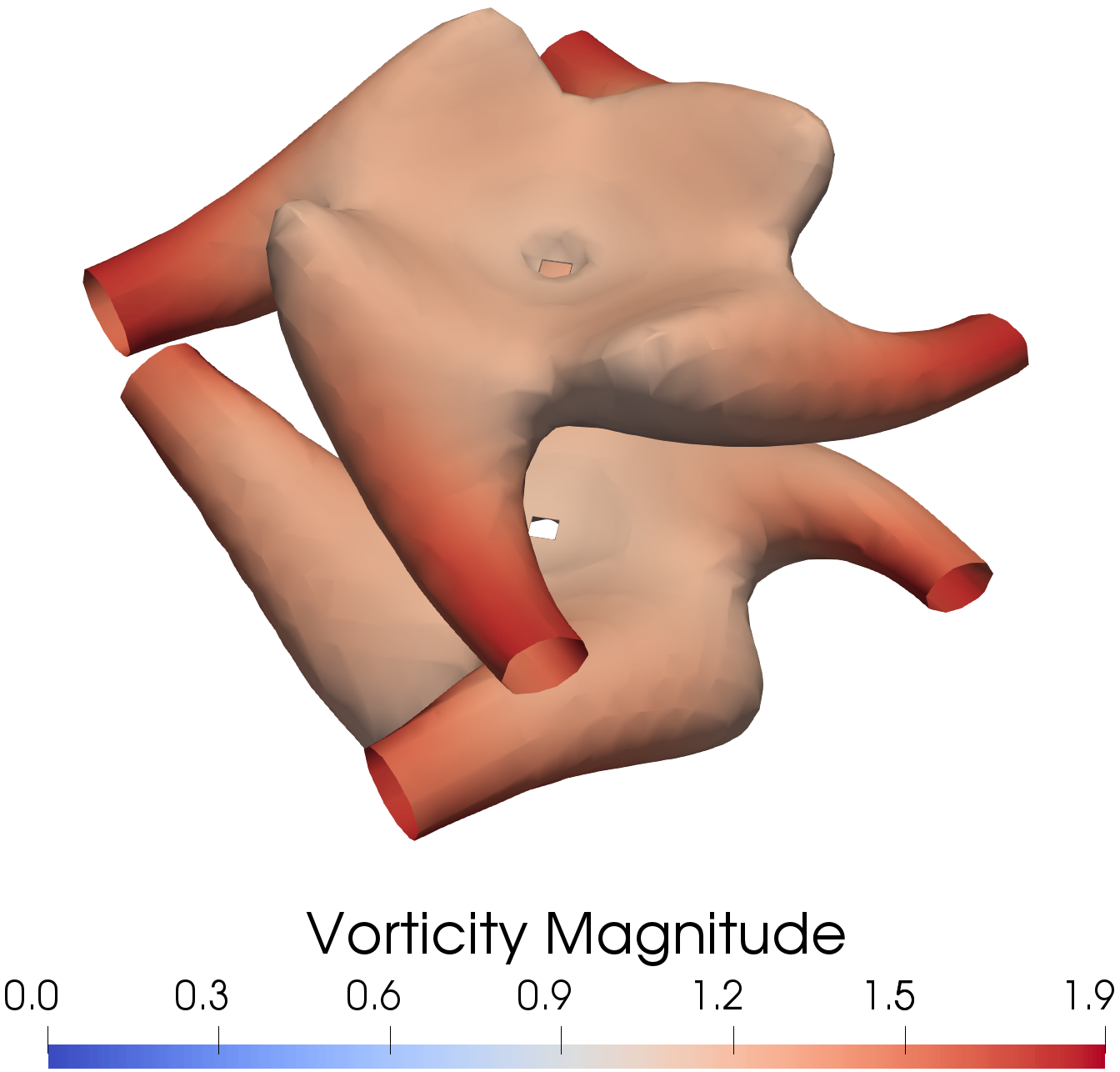}
		\caption{$t\sim2$}
	\end{subfigure}
	\begin{subfigure}[b]{0.32\textwidth}
   	 	\centering
		\includegraphics[height=2.5in]{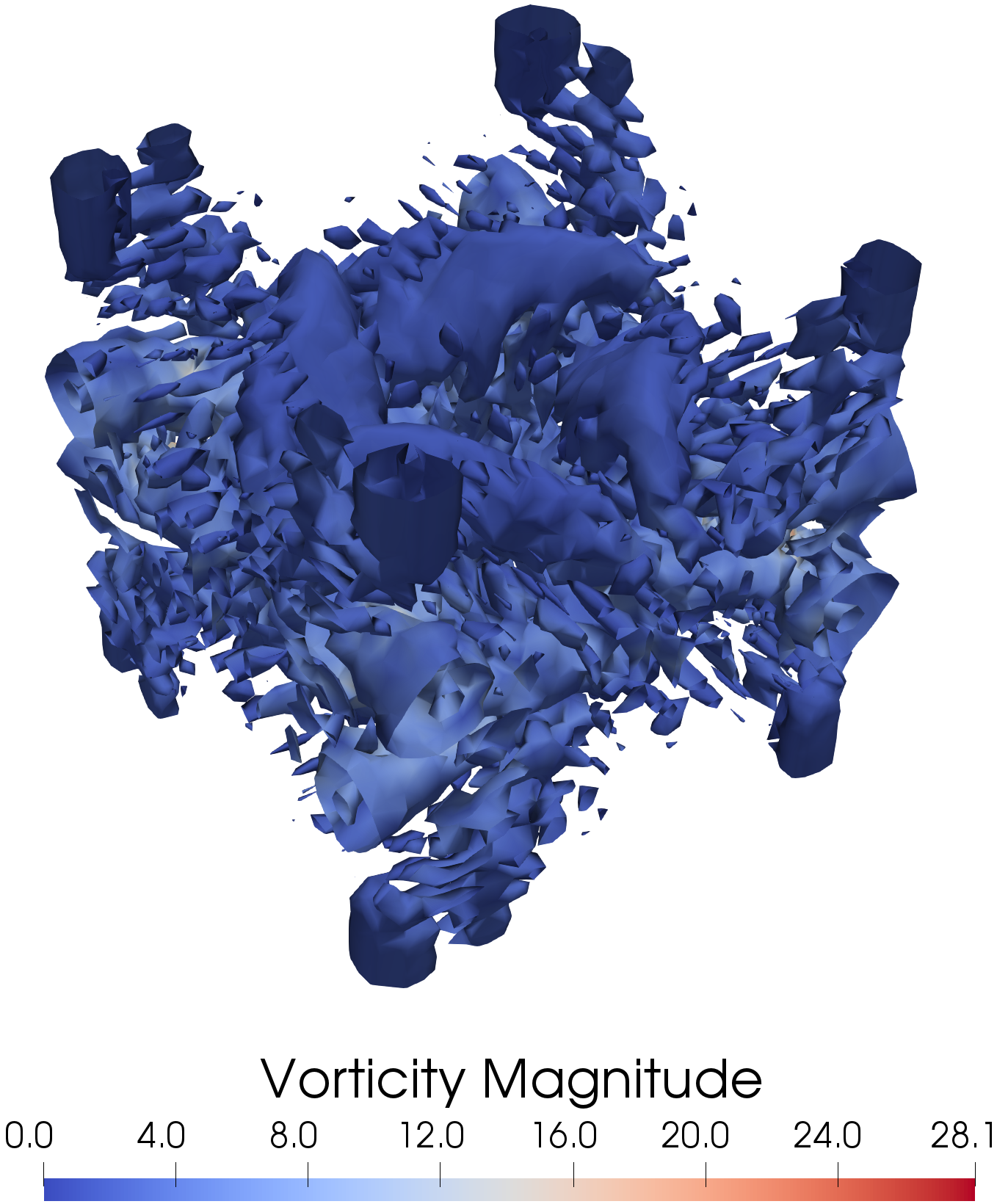}
		\caption{$t\sim6$}
	\end{subfigure}
	\begin{subfigure}[b]{0.32\textwidth}
   	 	\centering
		\includegraphics[height=2.5in]{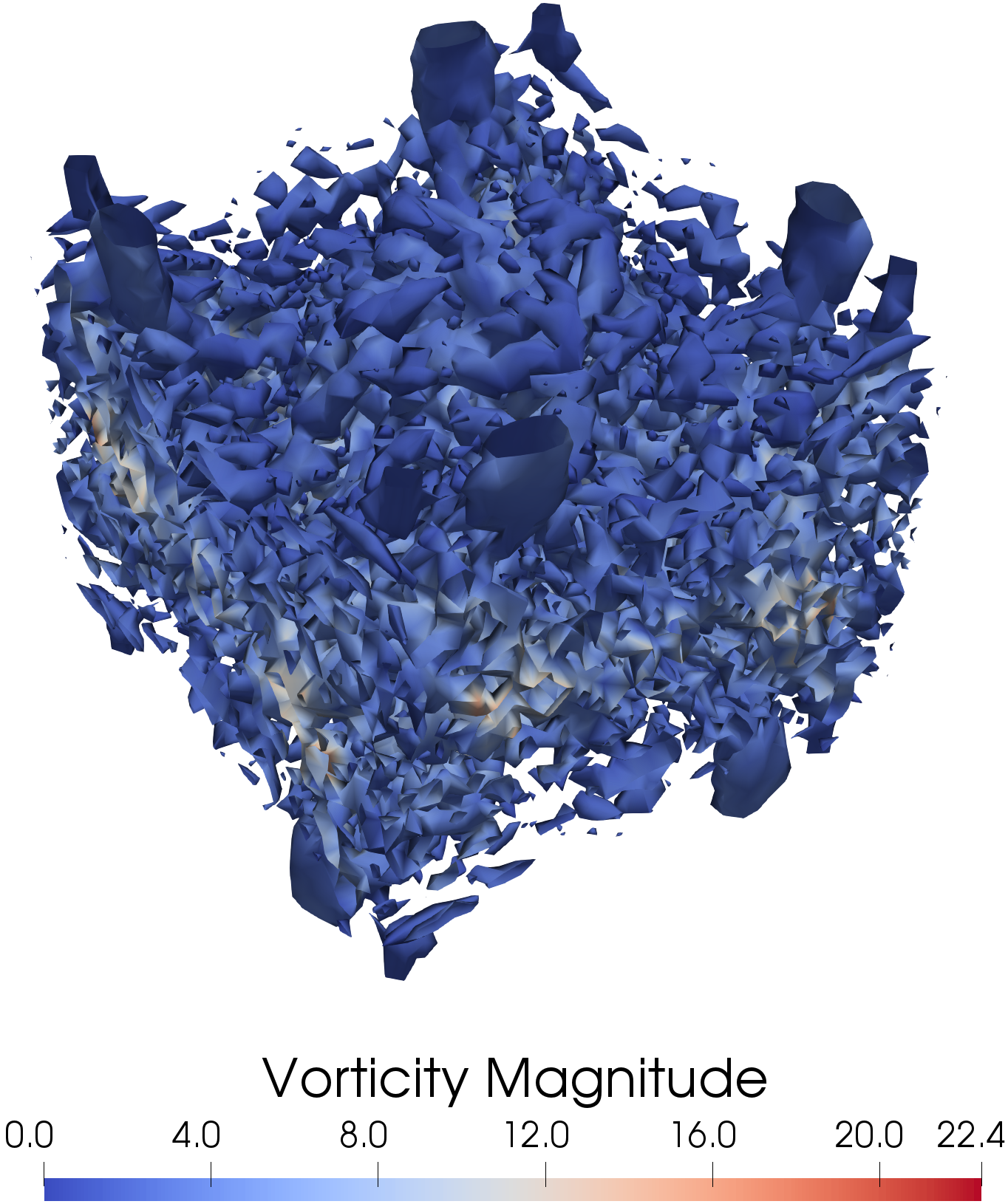}
		\caption{$t\sim10$}
	\end{subfigure}
	\caption{$Q$-criterion isocontours for $Q = 0.3$ attained using an unstabilized divergence-conforming discretization of polynomial degree $k' = 1$ on a $32^3$ element mesh for the 3D Taylor--Green vortex problem.}
\label{f22}	
\end{figure}
\begin{figure}[t!]
\centering
   	 \begin{subfigure}[b]{0.32\textwidth}
   	 	\centering
		\includegraphics[height=2.1in, width = 0.9999\textwidth]{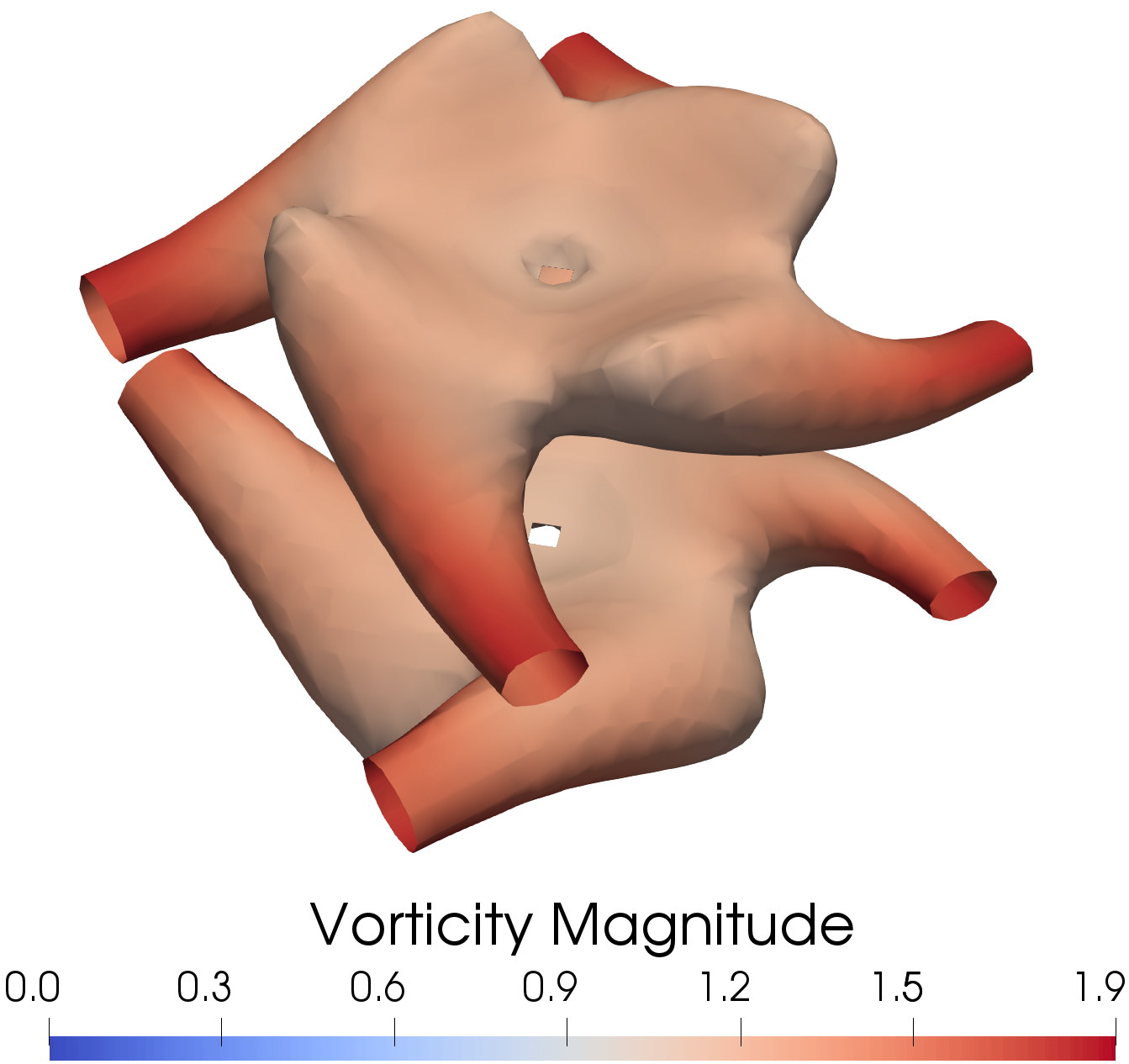}
		\caption{$t\sim2$}
	\end{subfigure}
	\begin{subfigure}[b]{0.32\textwidth}
   	 	\centering
		\includegraphics[height=2.5in]{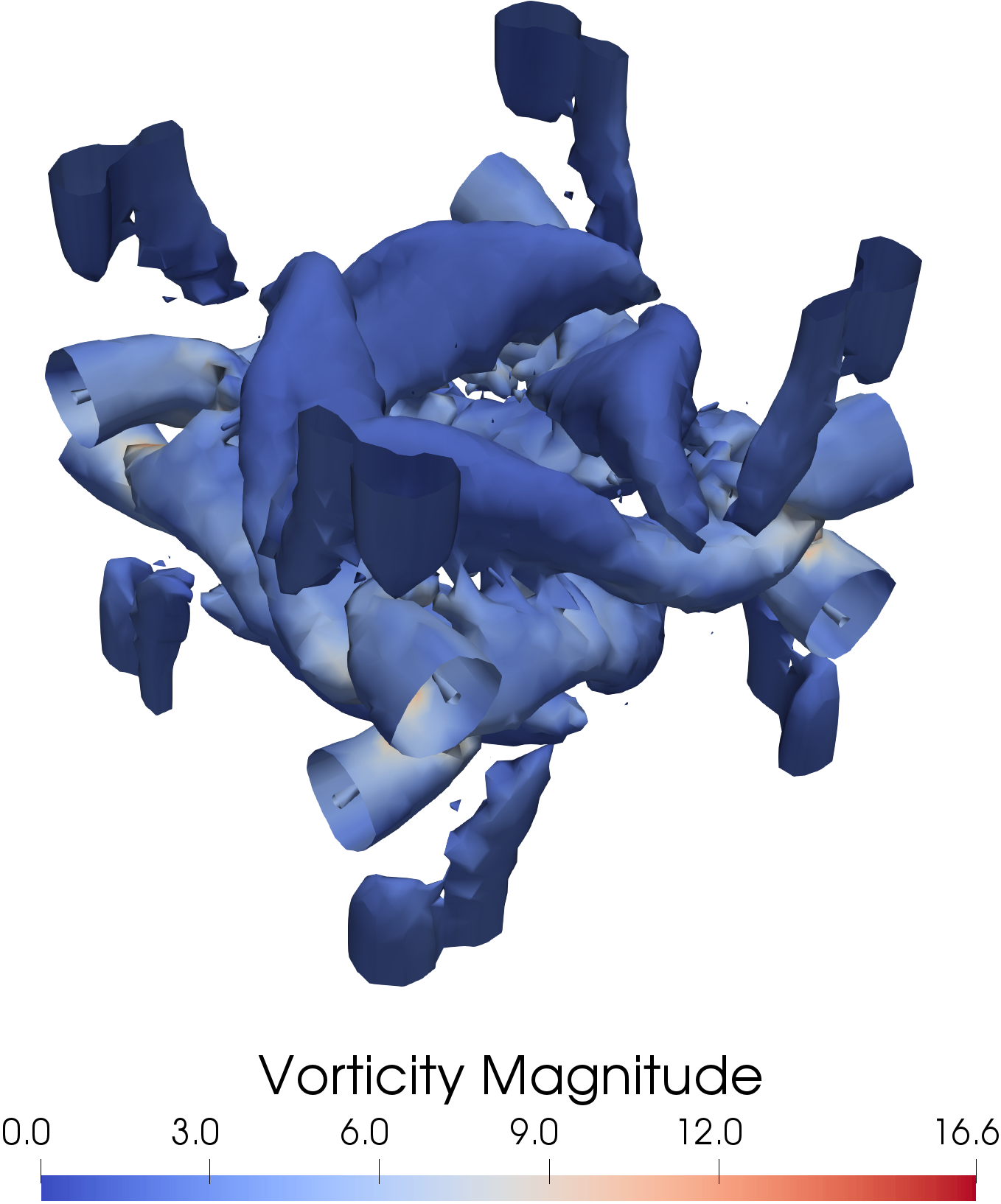}
		\caption{$t\sim6$}
	\end{subfigure}
	\begin{subfigure}[b]{0.32\textwidth}
   	 	\centering
		\includegraphics[height=2.5in]{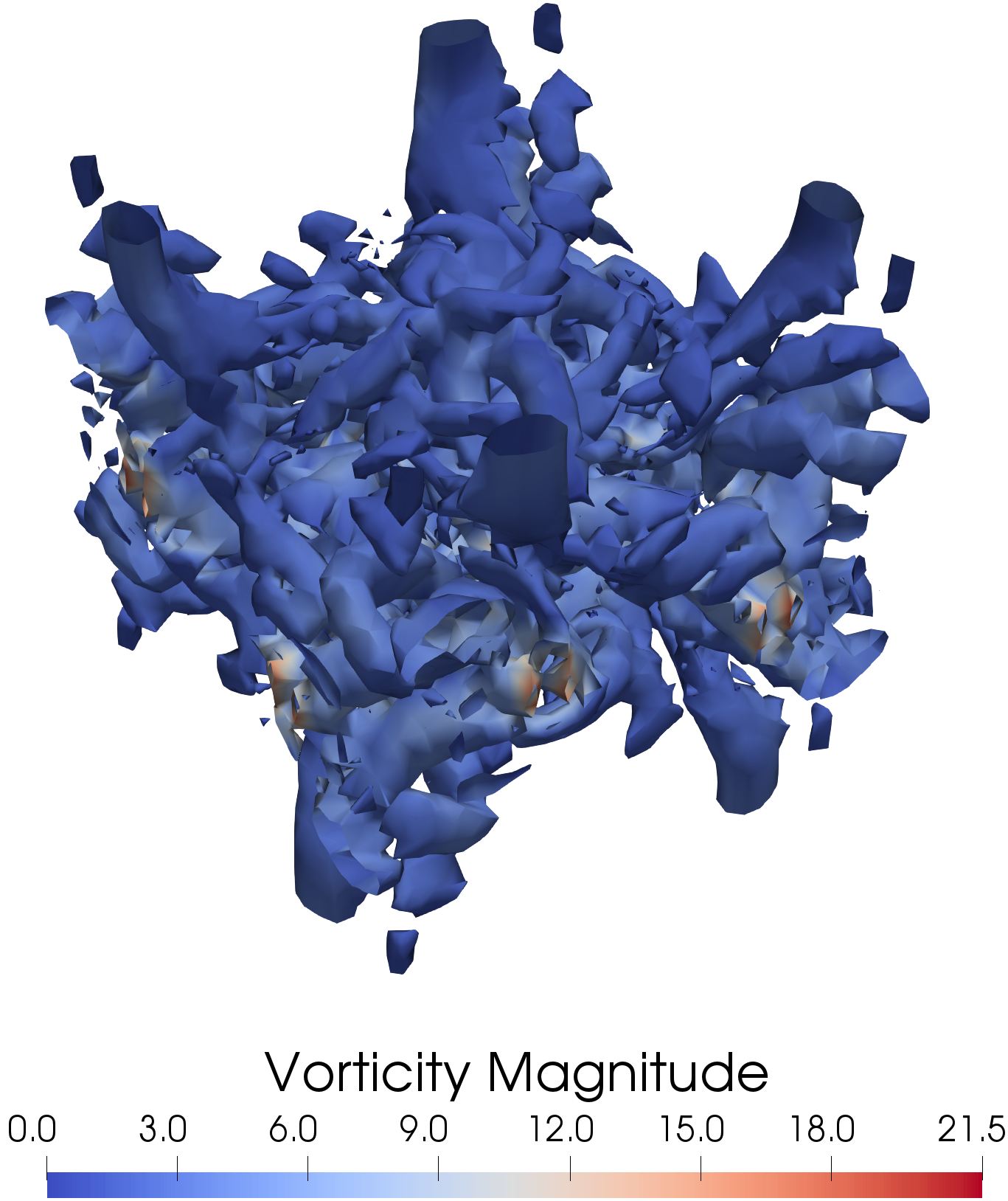}
		\caption{$t\sim10$}
	\end{subfigure}
	\caption{$Q$-criterion isocontours for $Q = 0.3$ attained using a stabilized divergence-conforming discretization of polynomial degree $k' = 1$ on a $32^3$ element mesh for the 3D Taylor--Green vortex problem.}
\label{f23}	
\end{figure}

\noindent Above, $\boldsymbol{S} = \nabla^s \boldsymbol{u}$ is the strain rate tensor and $\boldsymbol{\Omega} = \nabla \boldsymbol{u} - \nabla^s \boldsymbol{u}$ is the rotation rate tensor.  Consequently, regions of positive $Q$ are characterized by a local dominance of vorticity rate over strain rate.  In Figure \ref{f22} and \ref{f23}, we display the $Q$-criterion isocontours for $Q = 0.3$ for three different times corresponding to laminar flow ($t\sim2$), transitional flow ($t\sim6$), and turbulent flow ($t\sim 10$) for the unstabilized and stabilized discretizations of degree $k' = 1$.  The $Q$-criterion isocontours are colored by vorticity magnitude in the figures.  The unstabilized and stabilized $Q$-criterion isocontours look very similar at $t\sim2$.  However, the unstabilized and stabilized $Q$-criterion isocontours look pronouncedly different at $t\sim6$ and $t\sim10$.  In particular, there are far more small-scale structures at these times for the unstabilized discretization as compared with the stabilized discretization.  This is consistent with our previous observations.  Namely, as the unstabilized discretization has no dissipation mechanism besides viscous dissipation, spurious small-scale structures begin to accumulate throughout the domain during laminar-to-turbulent transition and this situation is exacerbated when the flow is fully turbulent.  The energy associated with these spurious small-scale structures results in the pile-up of energy seen at high wavenumbers in the unstabilized energy spectra.  These small-scale structures are not seen in the stabilized results as skeleton stabilization provides an effective model for the transfer of energy from resolved scales to unresolved scales.  This also explains why no pile-up of energy was seen in the stabilized energy spectra.  Similar conclusions may be drawn by comparing unstabilized and stabilized $Q$-criterion isocontours for degree $k' = 2$, but these results are not shown here for brevity.

\section{Conclusions}
\label{cl}
In this paper, we proposed a skeleton stabilization methodology for divergence-conforming B-spline discretizations of the incompressible Navier--Stokes problem.  Our method penalizes jumps in high-order normal derivatives of the velocity field across interior mesh facets to alleviate advective instabilities associated with Galerkin’s method.  Due to the special structure of divergence-conforming B-splines, our method acts only on the components of velocity tangential to interior mesh facets.  We showed that skeleton-stabilized divergence-conforming B-spline discretizations are consistent, pressure robust, and energy stable, and we also showed they admit global balance laws for linear momentum on rectilinear domains and axial angular momentum on cylindrical domains.  We illustrated how to select the stabilization parameter appearing in our method to avoid excessive dissipation in the diffusion-dominated limit and in the cross-wind direction.  We examined the accuracy and robustness of our proposed skeleton stabilization methodology using a suite of numerical experiments.  We first considered a manufactured solution, and from this study, we found our method yields optimal $\textbf{L}^2$- and $\textbf{H}^1$-norm convergence rates for the velocity field.  We next considered a lid-driven cavity problem and found skeleton stabilization removes spurious near-wall eddies that emerge from an application of Galerkin’s method.  We finally considered a 3D Taylor--Green vortex problem and discovered skeleton stabilization is an effective Implicit Large Eddy Simulation (ILES) strategy for this problem.  In particular, skeleton stabilization completely eliminates the pile-up of energy that occurs with Galerkin’s method.

There are many directions we plan to pursue in future work.  First, we plan to conduct a full stability and error analysis of our skeleton stabilization methodology, perhaps building upon similar analyses for edge stabilization of Scott--Vogelius discretizations \cite{BU08}.  Second, we plan to extend our skeleton stabilization methodology to multi-patch divergence-conforming B-spline discretizations as well as divergence-conforming discretizations based on T-splines \cite{buffa2014isogeometric}, LR-splines \cite{johannessen2015divergence}, and hierarchical B-splines \cite{evans2020hierarchical}.  Finally, we plan to extend our skeleton stabilization methodology to multi-physics applications where pointwise mass conservation is desirable, including coupled flow-transport \cite{galvin2012stabilizing}, incompressible fluid-structure interaction \cite{peskin1993improved}, and incompressible magnetohydrodynamics \cite{greif2010mixed}.  We are particularly interested in using skeleton stabilization to stabilize immersogeometric analysis of bio-prosthetic heart valves using divergence-conforming B-splines \cite{KA17}, as our previous stabilization strategy for this application severely limited the spatial accuracy of our resulting analysis scheme.

\section{Acknowledgement}
The first author worked on this project in pursuit of a Master’s thesis at the University of Colorado Boulder.  The second author was partially supported by National Science Foundation award number 2103939 and the third author was partially supported by National Science Foundation award number 2104106.  This work utilized resources from the University of Colorado Boulder Research Computing Group, which is supported by the National Science Foundation (awards ACI-1532235 and ACI-1532236), the University of Colorado Boulder, and Colorado State University.
\bibliographystyle{plain}
\bibliography{ref}{}
\end{document}